\newcommand{\gap}{\hspace{0.5em}}
\newcommand{\vgap}{\vspace{0.25em}}
\newcommand{\padding}{\rule[-1.45ex]{0pt}{0.2em}\gap}
\newcommand{\oddrow}{\rowcolor[gray]{0.95}}
\newcommand{\evnrow}{}
\newlength{\matrixheight}
\newcommand{\calculatepadding}[1]{\settoheight{\matrixheight}{\hbox{#1}}{#1}}
\theoremstyle{plain}
\newtheorem*{thmA}{Theorem A}
\newtheorem*{thmB}{Theorem B}
\newtheorem*{thmC}{Theorem C}
\newtheorem{thm}{Theorem}[section]
\newtheorem{pro}[thm]{Proposition}
\newtheorem{lem}[thm]{Lemma}
\newtheorem{Lemma}[thm]{Lemma}
\newtheorem{Proposition}[thm]{Proposition}
\newtheorem*{thm*}{Theorem}
\theoremstyle{definition}
\newtheorem{dfn}[thm]{Definition}
\newtheorem{claim}[thm]{Claim}
\newtheorem{exa}[thm]{Example}
\newtheorem{rem}[thm]{Remark}
\newtheorem{Notation}[thm]{Notation}
\newtheorem{Conv}[thm]{Convention}
\newtheorem{Definition}[thm]{Definition}
\newtheorem{Remark}[thm]{Remark}
\newtheorem{say}[thm]{}
\definecolor{wwwwww}{rgb}{0.4,0.4,0.4}
\newcommand{\map}{\dashrightarrow}
\let\c@equation\c@thm
\numberwithin{equation}{section}
\DeclareMathOperator{\codim}{codim}
\DeclareMathOperator{\Spec}{Spec}
\DeclareMathOperator{\Ext}{Ext}
\DeclareMathOperator{\Aut}{Aut}
\DeclareMathOperator{\Cl}{Cl}
\DeclareMathOperator{\NE}{NE}
\DeclareMathOperator{\Pic}{Pic}
\DeclareMathOperator{\NEbar}{\overline{NE}}
\DeclareMathOperator{\rk}{rk}
\DeclareMathOperator{\divisor}{div}
\DeclareMathOperator{\Eff}{Eff}
\DeclareMathOperator{\Bir}{Bir}
\DeclareMathOperator{\mult}{mult}
\DeclareMathOperator{\SBir}{Bir}
\DeclareMathOperator{\Hom}{Hom}
\DeclareMathOperator{\Proj}{Proj}
\DeclareMathOperator{\uProj}{\underline{Proj}}
\DeclareMathOperator{\Sym}{\mathit{Sym}}
\DeclareMathOperator{\rank}{rank}
\DeclareMathOperator{\PGL}{PGL}
\DeclareMathOperator{\centre}{z}
\DeclareMathOperator{\Supp}{Supp}
\newcommand{\PP}{\mathbb{P}}
\newcommand{\NN}{\mathbb{N}}
\newcommand{\FF}{\mathbb{F}}
\newcommand{\GG}{\mathbb{G}}
\newcommand{\CC}{\mathbb{C}}
\newcommand{\TT}{\mathbb{T}}
\newcommand{\ZZ}{\mathbb{Z}}
\newcommand{\RR}{\mathbb{R}}
\newcommand{\LL}{\mathbb{L}}
\newcommand{\QQ}{\mathbb{Q}}
\renewcommand{\P}{\mathbb{P}}
\newcommand{\Q}{\mathbb{Q}} 
\newcommand{\bR}{\mathbb{R}} 
\newcommand{\OO}{\mathcal{O}}
\newcommand{\cO}{\mathcal{O}}
\newcommand{\cC}{\mathcal{C}}
\newcommand{\cP}{\mathcal{P}}
\newcommand{\cI}{\mathcal{I}}
\renewcommand{\O}{\mathcal{O}} 
\newcommand{\cE}{\mathcal{E}}
\begin{document}

\title[\resizebox{5.9in}{!}{Birational geometry of Calabi--Yau pairs and $3$-dimensional Cremona transformations}]{Birational geometry of Calabi--Yau pairs and $3$-dimensional Cremona transformations}

\author[Carolina Araujo]{Carolina Araujo}
\address{\sc Carolina Araujo\\
IMPA\\
Estrada Dona Castorina 110\\
22460-320 Rio de Janeiro\\ Brazil}
\email{caraujo@impa.br}

\author[Alessio Corti]{Alessio Corti}
\address{\sc Alessio Corti\\
Department of Mathematics\\ Imperial College London\\ 
180 Queen's Gate\\ London SW7 2AZ\\ United Kingdom}
\email{a.corti@imperial.ac.uk}

\author[Alex Massarenti]{Alex Massarenti}
\address{\sc Alex Massarenti\\ Dipartimento di Matematica e Informatica, Universit\`a di Ferrara, Via Machiavelli 30, 44121 Ferrara, Italy}
\email{msslxa@unife.it}

\date{}
\subjclass[2020]{14E30, 14E05, 14E07}
\keywords{Sarkisov program, Calabi--Yau pairs, Cremona group}

\begin{abstract}
  In this paper we develop a framework that allows one to describe the
  birational geometry of Calabi--Yau pairs $(X,D)$.  After
  establishing some general results for Calabi--Yau pairs $(X,D)$ with
  mild singularities, we focus on the special case when $X=\PP^3$ and
  $D\subset \PP^3$ is a quartic surface.  We investigate how the
  appearance of increasingly worse singularities in $D$ enriches the
  birational geometry of the pair $(\PP^3, D)$, and lead to
  interesting subgroups of the Cremona group of $\PP^3$.
\end{abstract}

\maketitle

\tableofcontents

\section{Introduction}
\label{sec:introduction}

\subsection{Overview}

In \cite{Og13, Og13bis}, Oguiso addressed the following question, attributed to Gizatullin:

\medskip

\begin{center}
\emph{Which automorphisms of a smooth quartic surface $D\subset \PP^3$ are induced by  Cremona transformations of $\PP^3$?}
\end{center}

\medskip

\noindent He produced several interesting examples of smooth quartic surfaces in $\PP^3$, among which:
\begin{enumerate}
\item A smooth quartic surface $D\subset \PP^3$ with $\Aut(D)\cong\ZZ$, and such that no nontrivial automorphism of $D$
is induced by a Cremona transformation of $\PP^3$ (see~\cite[Theorem~1.2]{Og13} and~\cite[Theorem~1.8]{Og13bis}). 
\item A smooth quartic surface $D\subset \PP^3$ with
  $\Aut(D)\cong\ZZ_2*\ZZ_2*\ZZ_2$, and such that every automorphism of
  $D$ is induced by a Cremona transformation of $\PP^3$
  (see~\cite[Theorem~1.7]{Og13bis}).
\end{enumerate}

\noindent More recently, Paiva and Quedo produced examples of smooth
quartic surfaces $D\subset \PP^3$ with $\Aut(D)\cong\ZZ_2*\ZZ_2$, and
such that no nontrivial automorphism of $D$ is induced by a Cremona
transformation of $\PP^3$ (see~\cite[Example 1]{Paiva_Quedo}).

The pair $(\PP^3,D)$, where $D\subset \PP^3$ is a smooth quartic
surface, is an example of a \emph{Calabi--Yau (CY) pair}, that is, a
pair $(X,D)$, consisting of a normal projective variety $X$ and an
effective Weil divisor $D$ on $X$ such that $K_X+D\sim 0$.  Oguiso's
results can be interpreted as statements about the \emph{birational
  geometry of the CY pair $(\PP^3,D)$}, which is the theme of this
paper.

Given a CY pair $(X,D)$, there is a rational volume form $\omega$ on
$X$, unique up to scaling by a nonzero constant, such that
$D+\divisor_X \omega =0$. Our goal is to understand birational
self-maps of $X$ that preserve the volume form $\omega$, up to
scaling.  In particular, we are interested in the structure of the
group $\Bir (X,D)$ of such volume preserving birational maps.  When
the pair $(X, D)$ has \emph{canonical singularities} (see
Definition~\ref{dfn:terminalANDcanonical}), a birational self-map of
$X$ is volume preserving if and only if it restricts to a birational
self-map of $D$ (Proposition~\ref{group2}), and hence there is a group
homomorphism
\[
r\colon \Bir(X, D) \ \to \ \Bir(D) \,.
\]
When $D\subset \PP^3$  is a smooth quartic surface, the pair $(\PP^3,D)$ has canonical (even terminal) singularities and $\Bir(D)=\Aut(D)$. So Gizatullin's question can be re-stated as follows:

\begin{center}
\emph{For a smooth quartic surface $D\subset \PP^3$, what is the image of the restriction homomorphism 
\[
r\colon \Bir(\PP^3, D) \ \to \ \Aut(D) \,?
\]}
\end{center}

We are also interested in describing the kernel of $r$. More
generally, in dimension $n\geq 3$, consider a CY pair
$(\PP^n, D_{n+1})$, where $D_{n+1}$ is a degree $n+1$ hypersurface. In
this case, $\Bir (\PP^n, D_{n+1})$ is a subgroup of the Cremona group
$\Bir (\PP^n)$.  If we want to produce interesting subgroups of
$\Bir (\PP^n)$ in this way, then we must pick a hypersurface $D_{n+1}$
with special properties. Indeed, our first main result (Theorem~A)
implies that, if $D_{n+1}$ is general (very general if $n = 3$), then
the group $\Bir (\PP^n, D_{n+1})$ is trivial.  More precisely, if the
pair $(\PP^n, D_{n+1})$ has \emph{terminal singularities} (see
Definition~\ref{dfn:terminalANDcanonical}), and the divisor class
group (that is, the group of integral Weil divisors modulo linear equivalence) 
of $D_{n+1}$ is generated by $\OO_{D_{n+1}}(1)$, then
$\Bir (\PP^n, D_{n+1})=\Aut(\PP^n, D_{n+1})$.\footnote{We denote by
  $\Aut (X,D)$ the group of automorphisms of $X$ that stabilize $D$.}

The $2$-dimensional case $(\PP^2, C)$ has also been considered. When
$C$ is a smooth cubic, the restriction homomorphism
$r\colon \Bir( \PP^2, C) \ \to \ \Aut(C)$ is surjective (see~\cite[Theorem~2.2]{Og13bis}). In this case, generators for $\Bir( \PP^2, C)$ are
described in~\cite[Th\'eor\`eme 1.4.]{Pan07}, while the kernel of $r$
is generated by its elements of degree~$3$~\cite[Theorem~1]{Blanc08}.
When $C$ is the sum of the three coordinate lines, generators for
$\Bir( \PP^2, C)$ were given in~\cite[Theorem~1]{Blanc13}.

Next we turn to the setting of \emph{singular} quartic surfaces
$D\subset \PP^3$. Our results illustrate how the appearance of
increasingly worse singularities on $D$ enriches the birational
geometry of the pair $(\PP^3, D)$.  The simplest case to consider is
when $D\subset \PP^3$ is a quartic surface having a single
$A_1$-singularity\footnote{In this case the pair $(\PP^3, D)$ has
  canonical but not terminal singularities.}, and the divisor class group of
$D$ is generated by $\OO_D(1)$.  Theorem~B gives a complete
description of $\Bir (\PP^3,D)$ in this case.  We then investigate
what happens when the singularities of $D$ become worse.  Theorem~C
addresses the birational geometry of the pair $(\PP^3, D)$ when
$D\subset \PP^3$ is a quartic surface with a single $A_2$-singularity,
and the divisor class group of $D$ is generated by $\OO_D(1)$. At the most
singular end of the spectrum, we find rational quartic surfaces. These
have been proven to be Cremona equivalent to a plane in
~\cite[Theorem~1]{Mel20}. Furthermore, log Calabi--Yau pairs of the
form $(\mathbb{P}^3,D)$ and coregularity at most one have been
recently classified in~\cite{Duc22}.

Our main tool in this paper is a factorisation theorem for volume
preserving birational maps. We briefly explain it here, and refer to
Section~\ref{sec:sarkisov} for details.  The \emph{Sarkisov} program
allows one to factorize any birational map $\psi\in \Bir (\PP^n)$ as a
\emph{chain of links} between Mori fibre spaces:
\[
  \begin{tikzpicture}[xscale=2,yscale=-1.2]
    \node (A0_0) at (0, 0) {$\PP^n=X_0 \ $};
    \node (A0_1) at (1, 0) {$\ X_1\ $};
    \node (A0_2) at (2, 0) {$\ \ \cdots\ \ $};
    \node (A0_3) at (3, 0) {$\ X_{n-1}\ $};
    \node (A0_4) at (4, 0) {$\ X_n=\PP^n.$};
    \node (A1_0) at (0, 1) { $\Spec \CC$ };
    \node (A1_1) at (1, 1) {$\ S_1\ $};
    \node (A1_3) at (3, 1) {$\ S_{n-1}\ $};
    \node (A1_4) at (4, 1) { $\Spec \CC$ };
    \path (A0_0) edge [->,dashed]node [auto] {$\scriptstyle{\psi_1}$} (A0_1);
    \path (A0_1) edge [->,dashed]node [auto] {$\scriptstyle{\psi_2}$} (A0_2);
    \path (A0_2) edge [->,dashed]node [auto] {$\scriptstyle{\psi_{n-1}}$} (A0_3);
    \path (A0_3) edge [->,dashed]node [auto] {$\scriptstyle{\psi_n}$} (A0_4);
    \path (A0_0) edge [->,dashed,bend right=40]node [auto] {$\scriptstyle{\psi}$} (A0_4);
    \path (A0_0) edge [->]node [auto] { } (A1_0);
    \path (A0_1) edge [->]node [auto] { } (A1_1);
   \path (A0_3) edge [->]node [auto] { } (A1_3);
   \path (A0_4) edge [->]node [auto] { } (A1_4);
  \end{tikzpicture}
  \]
  The \emph{Sarkisov} program was established in dimension~3
  in~\cite{Co95}, and in higher dimensions in~\cite{HM13}.  The volume
  preserving version of this theorem was established
  in~\cite{CK16}. It states that, if the birational map $\psi$ is
  volume preserving for some \emph{log canonical} CY pair
  $(\PP^n, D)$, then there is a factorisation as above, and divisors
  $D_i\subset X_i$ making each $(X_i,D_i)$ a mildly singular CY pair
  (see Definition~\ref{dfn:(t,c)etc} for the notion of \emph{(t,~lc)}
  pair), and each $\psi_i$ a volume preserving link between CY pairs
  (see Definition~\ref{def:volume_preserving}).

  A mildly singular CY pair $(X,D)$ together with a Mori fibre space
  structure $X\to S$ is called a \emph{Mori fibered (Mf) CY pair} (see
  Definition~\ref{def3}).  The \emph{pliability} of a Mf CY pair
  $(X, D)$ is the set $\cP(X, D)$ of equivalence classes of Mf CY
  pairs that admit a volume preserving birational map to $(X, D)$ (see
  Definition~\ref{square} for the precise notion of equivalence in
  this setting). Our main theorems not only describe $\Bir (X, D)$,
  but also determine the pliability $\cP(X, D)$ of the CY pairs in
  question.

  Theorem~A states that, if $X$ is a Fano variety with $\rho(X)=1$,
  $(X, D)$ is a CY pair with terminal singularities, and the divisor class
  group of $D$ is generated by the restriction of a divisor on $X$,
  then $\cP(X,D)=\big\{(X,D)\to \Spec \CC\big\}$ is a set with one
  element, and $\Bir (X, D) = \Aut (X,D)$.

  In the setting of Theorem~B, let $D\subset \PP^3$ be a quartic
  surface with a single $A_1$-singularity $z\in D$, and suppose that
  the divisor class group of $D$ is generated by $\OO_D(1)$.  The blow-up
  $X \to \PP^3$ of $z\in \PP^3$, together with the strict transform
  $D_X\subset X$ of $D$, and the $\PP^1$-bundle $X \to \PP^2$ induced
  by the projection from $z$, is a Mf CY pair. Moreover, the
  birational morphism $(X, D_X) \to (\PP^3,D)$ is volume
  preserving. Theorem~B states that the pliability of $(\PP^3, D)$ is
  the set consisting of the two elements $(\PP^3,D)\to \Spec \CC$ and
  $(X,D_X) \to \PP^2$.

  Our Theorem~C describes the pliability of the pair $(\PP^3, D)$ when
  $D\subset \PP^3$ is a quartic surface with a single
  $A_2$-singularity, and the divisor class group of $D$ is generated by
  $\OO_D(1)$. We were surprised to discover just how large this
  pliability is.

\medskip

Throughout the paper we work over the field $\CC$ of complex numbers,
or, more generally, any algebraically closed field of characteristic
zero.

In the remaining of the introduction, we define all the relevant
notions regarding CY pairs and state precisely our main results,
Theorems~A, B and C.  In Section~\ref{sec:sarkisov}, we review the
Sarkisov program for Mf CY pairs.  In Sections~\ref{section_ThmA},
\ref{section:A1} and \ref{section:A2}, we prove Theorems~A, B and C,
respectively. In Section~\ref{sec:extr-contr}, we develop the basic toolkit needed to prove Theorems B and C, and, 
more generally, to study volume preserving birational maps between 3-dimensional Mf CY pairs.
Recently, some of these tools were used in \cite{APZ} to settle Gizatullin's problem for smooth quartic surfaces with Picard number $2$.

\subsection{Calabi-Yau pairs}

\begin{Definition}
  \label{dfn:CYpairs} \label{def:volume_preserving} A
  \emph{Calabi--Yau (CY) pair} is a pair $(X,D)$ consisting of a
  normal projective variety $X$ and an effective integral Weil divisor
  $D$ on $X$ such that $K_X+D$ is linearly equivalent to $0$.  
  (In particular, $K_X+D$ is Cartier.)
  There
  exists a top degree rational differential form $\omega=\omega_{X,D}$
  on $X$, unique up to multiplication by a nonzero constant, such that
  $D+\divisor_X \omega =0$. With a slight abuse of language, we call
  $\omega$ the \emph{volume form} of the CY pair $(X,D)$.

Let $(X,D_X)$ and $(Y, D_Y)$ be CY pairs, with associated volume forms $\omega_{X,D_X}$ and $\omega_{Y,D_Y}$.
A birational map $\varphi
  \colon X \dasharrow Y$ is \emph{volume preserving} if there exists a
  nonzero constant $\lambda \in \CC^\times$ such that 
\[
\varphi^* (\omega_{Y,D_Y}) = \lambda \omega_{X,D_X}.
\]
We abuse language and say that $\varphi$ is a \emph{birational map
  of CY pairs} to mean that it is a volume preserving map. 

We denote by $\Bir(X,D)$ the group of volume preserving birational
self-maps of the CY pair $(X,D)$, and by $\Bir\big((X,D_X),(Y,D_Y)\big)$ the set of volume preserving birational maps between CY pairs $(X,D_X)$ and $(Y,D_Y)$.
 
Volume preserving birational maps are called \emph{crepant birational} in \cite{Ko13}.
\end{Definition}

Next we introduce several natural classes of singularities of pairs.
Given a CY pair $(X,D)$ and a divisorial valuation $E$ of $X$, we denote by $a(E, K_X+D)=a(E,X,D)$ the discrepancy of $E$ with respect to $(X,D)$, as defined in \cite[Definition 2.25]{KM98}.
We denote by $\centre_E X$ the \emph{centre} of $E$ on $X$. 
Depending on the context, we view it as a scheme-theoretic point or a subvariety of $X$.
When $\codim_{(\centre_E X)} X \geq 2$, we say that $E$ has \emph{small centre} on $X$.

\begin{Definition}
  \label{dfn:terminalANDcanonical}
 A pair $(X, D)$ has \emph{terminal singularities} if for all $z\in
 X$:
 \begin{enumerate}[(a)]
 \item if $\codim_z X \leq 2$, then both $X$ and $D$ are smooth at $z$;
  \item if $\codim_z X > 2$, then for all divisorial valuations $E$ of
    $X$ with $\centre_E X= z$, $a(E, K_X+D)>0$.
\end{enumerate}
A pair $(X, D)$ has \emph{canonical singularities} if for all divisorial valuations $E$ with small centre on $X$, $a(E,K_X+D)\geq 0$. 
Following common usage, we say that a pair ``is'' terminal (respectively canonical) if it has terminal (respectively canonical) singularities.
\end{Definition}

\begin{rem}
  \begin{enumerate}[(1)]
  \item Suppose that $X$ is $\mathbb{Q}$-Gorenstein. 
    If the pair $(X,D)$ is terminal (respectively canonical) then both $X$ and
    $D$ are terminal (respectively canonical). In particular, $D$ is normal.
  \item If $X$ is smooth and $(X,D)$ is terminal,
  then for all $z\in X$ with $\codim_z X>2$, $\mult _z(D)<\codim_zX-1$.
\item In particular, if $X$ is a smooth \mbox{$3$-fold}, then $(X,D)$ is
  terminal if and only if $D$ is smooth.
\item If $X$ is smooth and $z\in D$ is an isolated singularity
  of multiplicity $\mult_z D < \dim X-1$ and smooth projectivized tangent cone,
  then $(X,D)$ is terminal.
  \end{enumerate}
\end{rem}

\begin{Definition}\label{dfn:(t,c)etc}
  We say that a pair $(X,D)$ is \emph{(t,~c)} (respectively
  \emph{(t,~lc)}) if $X$ has terminal singularities and the pair
  $(X,D)$ has canonical (respectively log canonical) singularities. We
  say that a pair $(X,D)$ is \emph{$\mathbb{Q}$-factorial} if $X$ is
  $\mathbb{Q}$-factorial.
\end{Definition}

\begin{Definition}\label{def3}
  A \emph{Mori fibered (Mf) CY pair} is a $\mathbb{Q}$-factorial
  \textit{(t,~lc)} CY pair $(X,D)$ together with a Mori fiber space
  structure on $X$, i.e., a morphism $f\colon X\rightarrow S$ such that
  $f_* \mathcal{O}_X=\mathcal{O}_S$, $-K_X$ is $f$-ample, and
  $\rho(X)-\rho(S)=1$.
\end{Definition}

\begin{Definition}\label{square}
  Let $(X,D_X)\rightarrow S_X$ and $(Y,D_Y)\rightarrow S_Y$ be Mf CY
  pairs. A volume preserving birational map
  $f \colon (X,D_X) \dasharrow (Y,D_Y)$ is \emph{square} if it fits into a
  commutative diagram
\[
  \begin{tikzpicture}[xscale=2.3,yscale=-1.2]
    \node (A0_0) at (0, 0) {$X$};
    \node (A0_1) at (1, 0) {$Y$};
    \node (A1_0) at (0, 1) {$S_X$};
    \node (A1_1) at (1, 1) {$S_Y$};
    \path (A0_0) edge [->,dashed]node [auto] {$\scriptstyle{f}$} (A0_1);
    \path (A0_0) edge [->]node [auto] {} (A1_0);
    \path (A0_1) edge [->]node [auto] {} (A1_1);
    \path (A1_0) edge [->,dashed]node [auto] {$\scriptstyle{g}$} (A1_1);
  \end{tikzpicture}
  \]
  where $g$ is birational, and the induced birational map of generic
  fibers $f_\xi \colon X_\xi\dasharrow Y_{\xi}$ is biregular. In this case, we
  say that the Mf CY pairs $(X,D_X)\rightarrow S_X$ and
  $(Y,D_Y)\rightarrow S_Y$ are \textit{square equivalent}.
\end{Definition}

\begin{rem}
  \label{rem:square} For Mf CY pairs $(X,D_X)\rightarrow S_X$ and $(Y,D_Y)\rightarrow S_Y$ to be square equivalent,
  it is necessary that $\dim S_X=\dim S_Y$.
  If $S_X=S_Y=\Spec \CC$, then the Mf CY pairs are square equivalent if
  and only if $(X,D_X)$ and $(Y,D_Y)$ are isomorphic as CY pairs. 
\end{rem}

\begin{Definition}\label{dfn:pliability}
The \textit{pliability} of the Mf CY pair $(X,D_X)$ is the set
\[
\mathcal{P}(X,D_X)\ =\ \frac{\Big\{\mbox{Mf CY pairs } (Y,D_Y)\rightarrow S_Y\: \big| \: \Bir\big((X,D_X),(Y,D_Y)\big)\neq\emptyset \Big\}}{\mbox{square equivalence}}.
\]
We say that the Mf CY pair $(X,D_X)\rightarrow S_X$ is
\textit{birationally rigid} if $\mathcal{P}(X,D_X)$ consists of a
single element (necessarily the element $(X,D_X)\to S_X$).
\end{Definition}

\subsection{Main results}
\begin{Notation}
If $Z$ is a normal variety, we denote by $\Cl Z$ the \emph{divisor class group} of $Z$, i.e., the group of integral Weil divisors on $Z$ modulo linear equivalence.
Equivalently, $\Cl Z$ is the group of isomorphism classes of \emph{divisorial sheaves} on $Z$, i.e., rank~$1$ torsion free saturated coherent sheaves on $Z$.
\end{Notation}

\begin{rem}
Let $X$ be a normal variety that is nonsingular in codimension~$2$, and $D\subset X$ a normal subvariety of codimension~$1$.
Given  an integral Weil divisor $A$ on $X$ such that $\Supp A\not \subset  D$, the restricted Weil divisor $A_{|D}$ can be defined by restricting $A$ as a Cartier divisor outside a subset of codimension $\geq 3$. It is not well-behaved in general.  

On the other hand, we will use frequently the following: If $X$ has terminal singularities, and $D$ and $A$ are $\QQ$-Cartier, then, by~\cite[Proposition~5.26]{KM98}, we have an exact sequence:
\[(0) \to \OO_X(-D+A) \to \OO_X(A)\to \OO_D(A_{|D}) \to (0) \, .\]
In particular, this says that the coherent sheaf
\[
\OO_X(A)/\OO_X(-D+A)
\]
is a rank~$1$ torsion free saturated $\OO_D$-module, i.e., a divisorial sheaf on $D$.
\end{rem}

\begin{thmA}\label{thmA}
  Let $(X,D)\to \Spec \CC$ be a Mf CY pair (so that $X$ is a $\QQ$-factorial terminal Fano variety with $\rho =1$) such that 
  \begin{enumerate}[(i)]
  \item $(X,D)$ is terminal, and
  \item $\Cl D =\ZZ \cdot \OO_D(A_{|D})$, where $A$ is an integral Weil divisor on $X$.
  \end{enumerate}
  Then $(X,D)\to \Spec \CC$ is birationally rigid and $\Bir (X,D) = \Aut (X, D)$.
\end{thmA}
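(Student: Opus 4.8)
The plan is to use the volume preserving Sarkisov program (recalled in Section~\ref{sec:sarkisov}) to show that any volume preserving birational self-map $\varphi\colon (X,D)\dasharrow(X,D)$ factors as a chain of volume preserving Sarkisov links between Mf CY pairs starting and ending at $(X,D)\to\Spec\CC$, and then argue that under hypotheses (i) and (ii) there are in fact \emph{no nontrivial links available}, so the chain has length zero and $\varphi$ is an isomorphism of CY pairs, i.e.\ lies in $\Aut(X,D)$. The same argument shows $\cP(X,D)=\{(X,D)\to\Spec\CC\}$, giving birational rigidity. Since $X$ is Fano with $\rho(X)=1$, and $\Aut(X,D)\subseteq\Bir(X,D)$ always, the content is the reverse inclusion together with the rigidity statement.

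First I would set up the Sarkisov factorization: given $\varphi\in\Bir(X,D)$, apply the volume preserving Sarkisov program of \cite{CK16} to write $\varphi$ as a composite of volume preserving links $(X_i,D_i)\dasharrow(X_{i+1},D_{i+1})$ between $\mathbb{Q}$-factorial \textit{(t,~lc)} Mf CY pairs, with $(X_0,D_0)=(X_n,D_n)=(X,D)$ over $\Spec\CC$. The key point is that a Sarkisov link out of a Mf CY pair $(X,D)\to\Spec\CC$ with $\rho(X)=1$ must begin with a \emph{divisorial extraction} or be of type II/III/IV — but since the base is a point and $\rho(X)=1$, the only possibility to leave $X$ is a link starting with the blow-up of a subvariety (a divisorial valuation $E$) whose centre $\centre_E X=:z$ has $\codim_z X\geq 2$; equivalently the link is centred at a point or curve of $X$ where the extraction is $K_X+D$-crepant or has small positive discrepancy. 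Here hypothesis (i) intervenes decisively: since $(X,D)$ is \emph{terminal}, every divisorial valuation $E$ with small centre on $X$ has $a(E,K_X+D)>0$, so $E$ can never be extracted by a \emph{volume preserving} (i.e.\ crepant, discrepancy $=0$) birational modification. Therefore no volume preserving link can have a divisorial centre interior to $X$; the only maps of CY pairs out of $(X,D)\to\Spec\CC$ are isomorphisms. This forces $n=0$ in the chain, hence $\varphi\in\Aut(X)$, and being volume preserving it stabilizes $D$ (by Proposition~\ref{group2}, as the pair is in particular canonical), so $\varphi\in\Aut(X,D)$.

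I expect the main obstacle to be ruling out links of type that do \emph{not} require extracting a divisor interior to $X$ — specifically, one must make sure there is no volume preserving link $(X,D)\dasharrow(Y,D_Y)\to S_Y$ with $\dim S_Y>0$ that is, say, a flop-like or type II link not of the naive "blow up then blow down" shape, and no link whose first step is an isomorphism in codimension one followed by a genuine change of model. This is where hypothesis (ii), $\Cl D=\ZZ\cdot\OO_D(A)$, is used: any volume preserving birational map $(X,D)\dasharrow(Y,D_Y)$ restricts to a birational map $D\dasharrow D_Y$ which (since CY surface pairs — or more precisely the strict transform $D_Y$ — inherit the divisor class group up to the contribution of extracted/contracted divisors) must be compatible with the polarization; the rank-one class group of $D$ leaves no room for the Picard number of $D_Y$, hence of $Y$, to jump, so $Y$ again has $\rho=1$ and $S_Y=\Spec\CC$, and then $Y\cong X$ as CY pairs by the discrepancy argument above. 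Assembling these two inputs — terminality killing interior divisorial extractions, and the rank-one class group of $D$ pinning down $\rho$ along the chain — closes the argument. The routine parts (checking that $X$ Fano with $\rho=1$ is indeed a Mori fibre space over $\Spec\CC$, that $\Aut(X,D)\subseteq\Bir(X,D)$, and the bookkeeping of discrepancies in a link) I would state and cite rather than re-derive.
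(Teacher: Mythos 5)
Your reduction to the volume preserving Sarkisov program, and the observation that, since $\rho(X)=1$ and the base is a point, any nontrivial factorization must begin with a volume preserving divisorial extraction, is fine and agrees with the paper. The decisive step, however, is wrong. You claim that terminality of the pair forces $a(E,K_X+D)>0$ for \emph{every} divisorial valuation with small centre, so that no crepant extraction exists. Under Definition~\ref{dfn:terminalANDcanonical} (which is the only sensible definition: a pair with nonzero integral boundary can never satisfy the naive positivity condition), positive discrepancy is required only for centres of codimension $>2$; at a codimension-two centre the definition merely asks that $(X,D)$ be smooth there. And indeed, blowing up any codimension-two subvariety $Z\subset D$ at whose generic point $X$ and $D$ are smooth gives $a(E,K_X+D)=1-\mult_Z D=0$, i.e.\ a perfectly good volume preserving divisorial extraction out of a terminal CY pair. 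So terminality does not ``kill interior divisorial extractions''; its actual role (via Proposition~\ref{divextter}, using that $D$ itself is terminal) is to force the centre of any such extraction to lie in $D$ and to have codimension exactly two.

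The real work, which your proposal does not engage with, is to show that a codimension-two centre cannot initiate a Sarkisov link; this is where hypothesis~(ii) enters, and not in the way you use it. Because $\Cl D$ is generated by the restriction of a divisor from $X$, the centre $Z\subset D$ is a complete intersection $Z=D\cap H_d$; Lemma~\ref{l1} then computes $\Eff(Y)=\langle E,\widetilde H_b\rangle_+$ on the extraction $Y\to X$, so the ensuing 2-ray game must terminate by contracting (or fibering via) the strict transform of $H_b$, and the discrepancy/intersection-number computation in Proposition~\ref{codim2} shows this forces a non-terminal target, contradicting the definition of a volume preserving Sarkisov link. Your appeal to (ii) as ``pinning down $\rho$ along the chain'' does not substitute for this argument. (By contrast, your worry about links with no divisorial first step is a non-issue: with $\rho(X)=1$ over a point there are no small extremal contractions, so links of type~(III) or~(IV) cannot start at $(X,D)$.)
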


\begin{rem}
  \begin{enumerate}[(1)]
  \item Let $(X,D_X)$ and $(Y,D_Y)$ be \emph{(t,~c)} CY pairs. A birational map
    $\varphi \colon X \dasharrow Y$ is volume preserving if and only
    if $\varphi$ restricts to a birational map $D_X \dasharrow D_Y$ (Proposition~\ref{group1}). 
Thus, in the statement of Theorem A, we can take $\Bir(X,D)$
 na\"{\i}vely to mean the group of birational maps $\varphi\colon
 X\dasharrow X$ that stabilize $D$. Similar
 considerations apply to the statement of Theorem~B below.
\item Assumption~(i) is needed for the statement to hold. 
  Theorems~B and C address the cases when $X=\PP^3$ and $D$ is a generic
  quartic with one singular point of type $A_1$ and $A_2$, respectively. 
In  both cases the pair $(X,D)$ is canonical but not terminal, and $\Bir (X,D) \supsetneq \Aut (X, D)$. 
  \item Assumption~(ii) is also needed for the statement to hold. 
This is illustrated for instance by Oguiso's example (2) mentioned above.
  \item Assumption~(ii) is meaningful even when $n\geq 4$. If
   $X=\PP^4$ and $D$ is a quintic \mbox{$3$-fold} with ordinary
   quadratic singularities, then the pair $(X,D)$ is
   terminal but the restriction map $r\colon \Cl X \to \Cl D$ is not
   necessarily an isomorphism. For instance if $D$ contains a plane,
   then in general $D$ has $16$ ordinary quadratic singularities and
   $\Cl D$ has rank $2$. 
  \end{enumerate}
\end{rem}

\begin{thmB}\label{SBir_A1_sing}
 Let $D\subset \PP^3$ be a quartic surface. Assume the following:
 \begin{enumerate}[(i)]
 \item $D$ is smooth apart from a single $A_1$-singularity $z\in D$, 
 \item $\Cl D = \ZZ \cdot \OO_D(1)$.
 \end{enumerate}
In what follows denote by $f\colon X \to \PP^3$ the blow-up of $z\in \PP^3$,
$D_X\subset X$ the strict transform of $D$, and $\pi \colon X \to
\PP^2$ the $\PP^1$-bundle induced by the projection from $z$. 

Then the following holds:
\begin{enumerate}[(1)]
\item The pliability of the pair $(\PP^3, D)$ is the set with two
  elements 
  $$(\PP^3,D)\to \Spec \CC \text{ \textit{and} } \pi \colon (X, D_X)\to
  \PP^2.$$
\item The restriction  homomorphism $r\colon \Bir(\PP^3, D) \ \to \ \Bir(D)$ induces
an exact sequence of groups:
\[
1\to \GG  \to \Bir(\PP^3, D)\to \Bir D
\to 1 ,
\]
where $\GG$ is the twist of $\GG_m$ corresponding to the quadratic
extension $\CC(x, y)\subset \CC(D)$.
More precisely, the morphism $\pi \colon D_X \to \PP^2$ is finite of degree 2 and
  identifies the function field $\CC (D_X)$ with a quadratic extension
  $\CC (x,y)(\sqrt{a})$ where $a=a_6(x,y)\in \CC[x,y]$ is a sextic
  polynomial. The group $\GG$ is the form of
  $\GG_m$ over $\CC(x,y)$ whose $\CC(x,y)$-points are the solutions of
  Pell's equation
$$
U^2-aV^2=1.
$$
\end{enumerate}
\end{thmB}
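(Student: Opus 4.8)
The plan is to run the volume preserving Sarkisov program on $(\PP^3,D)$ and track the Mf CY pairs that appear. First I would analyze the pair $(X,D_X)\to\PP^2$: the blow-up $f\colon X\to\PP^3$ of the $A_1$-point $z$ resolves the pair in the sense that $(X,D_X)$ is (t,~lc), indeed (t,~c), since the exceptional divisor $E\cong\PP^2$ meets $D_X$ in a conic (the projectivized tangent cone of the node) and $K_X+D_X=f^*(K_{\PP^3}+D)\sim 0$ with discrepancy $0$ along $E$; hence $f$ is volume preserving. The projection from $z$ makes $X\cong\PP_{\PP^2}(\OO\oplus\OO(1))$ a $\PP^1$-bundle $\pi\colon X\to\PP^2$, with $-K_X$ relatively ample, so $(X,D_X)\to\PP^2$ is a Mf CY pair. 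Restricting $\pi$ to $D_X$ gives a degree $2$ cover $D_X\to\PP^2$ (a general line through $z$ meets $D$ in two further points), branched over the sextic $\{a_6=0\}$ where $a_6$ is the discriminant; this is the classical presentation of a quartic with a node as a double plane, and it identifies $\CC(D_X)=\CC(x,y)(\sqrt{a_6})$.

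Next I would prove part (1), that the pliability has exactly these two elements. One inclusion is clear; for the reverse, suppose $(Y,D_Y)\to S_Y$ is a Mf CY pair with a volume preserving map to $(\PP^3,D)$. By the volume preserving Sarkisov program (\cite{CK16}), it suffices to show that any volume preserving Sarkisov link out of $(\PP^3,D)\to\Spec\CC$ or out of $(X,D_X)\to\PP^2$ lands (up to square equivalence) in this same two element set. The links out of $(\PP^3,D)\to\Spec\CC$ must be initiated by a volume preserving extraction from $\PP^3$; by Proposition~\ref{group1}/the (t,~c) criterion such an extraction has centre contained in $D$ with log discrepancy $\le 1$, and since $D$ is canonical with $\Cl D=\ZZ\cdot\OO_D(1)$ the only such valuation with centre a point is the blow-up of the node $z$ — giving exactly $(X,D_X)\to\PP^2$. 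For links out of $(X,D_X)\to\PP^2$: a $2$-ray game on $X\cong\PP_{\PP^2}(\OO\oplus\OO(1))$ over a point gives the two fibrations $X\to\PP^2$ and $X\to\PP^3$ (contracting the negative section $E$), so the only links are back to $(\PP^3,D)$ or square birational self-maps over $\PP^2$; the constraint $\Cl D=\ZZ\cdot\OO_D(1)$ rules out any link creating a new divisorial contraction or a fibration over a curve, because $D_X$ contains no curves that could be contracted volume-preservingly (its class group forces any such curve class to move). Hence the pliability is as claimed.

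Part (2) is then essentially a consequence of the structure of the $2$-ray game over $\PP^2$ together with Proposition~\ref{group2}. Every $\varphi\in\Bir(\PP^3,D)$ lifts to a volume preserving self-map of $(X,D_X)$, which by the rigidity of the Mf structure over $\Spec\CC$ established in (1) is square over $\PP^2$, i.e.\ descends to an automorphism of $\PP^2$ and acts on the generic fibre $X_L\cong\PP^1_L$ preserving the two points $D_L\subset X_L$. The surjectivity of $r\colon\Bir(\PP^3,D)\to\Bir D$ and its splitting come from the fact that $\Bir D=\Aut D$ acts on the double plane $D_X\to\PP^2$ compatibly with the covering involution, hence lifts to the Mf CY pair; concretely, given the double cover presentation, $\Aut D$ is generated by $\PGL$-data on $\PP^2$ preserving $a_6$ together with the Geiser-type involution, all of which are induced by volume preserving maps of $(X,D_X)$, and a section is obtained by sending each such to its canonical lift. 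Finally the kernel: a volume preserving self-map of $(X,D_X)$ that is the identity on $D_X$ and on $\PP^2$ is a birational automorphism of the generic fibre $\PP^1_L=\PP^1_{\CC(x,y)}$ fixing the two points cut out by $D_L$; after choosing the affine coordinate in which those points are $\pm\sqrt{a_6}$, these are exactly the $\CC(x,y)$-points of the norm-one torus of $\CC(x,y)(\sqrt{a_6})/\CC(x,y)$, i.e.\ the solutions of $U^2-a_6V^2=1$, which is the twisted form $\GG$ of $\GG_m$.

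The main obstacle I expect is part (1): verifying that the $2$-ray game produces no unexpected links. One must rule out, using only hypothesis (ii), that some birational modification of $(X,D_X)$ over $\PP^2$ (a flip or a different divisorial contraction after composing with bundle automorphisms) yields a genuinely new Mf CY pair; the clean way is to show that any volume preserving extraction over a general point of $\PP^2$ or over $\PP^3$ must have log discrepancy forcing its centre into the finite set $\{z\}\cup\Sing D$, and then that $\Cl D=\ZZ\cdot\OO_D(1)$ together with $D$ having only the single $A_1$-point leaves no room. This is where the hypothesis on the class group does the real work, exactly as in Theorem~A.
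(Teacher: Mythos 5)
Your overall strategy is the same as the paper's (first extraction is the blow-up of $z$, then control the game over $\PP^2$, identify the kernel with a norm-one Pell torus), and your description of the double-cover geometry of $D_X\to\PP^2$ and of the group $\GG$ is essentially right. But there is a genuine gap exactly where you flag ``the main obstacle,'' and the fix you sketch is not the right mechanism. Contrary to your claim, the hypothesis $\Cl D=\ZZ\cdot\OO_D(1)$ does \emph{not} rule out links out of $(X,D_X)/\PP^2$ starting with a divisorial extraction centred along a curve of $D_X$: by Proposition~\ref{divextter} every link of type (I)/(II) from $X/\PP^2$ begins with the blow-up of a curve $\cC\subset D_X$, and such links genuinely occur --- the paper's own factorization of the involution lifting $\tau$ blows up the curve $E\cap D_X$ and passes through $\PP\bigl(\OO_{\PP^2}\oplus\OO_{\PP^2}(3)\bigr)$. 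So the game over $\PP^2$ does not reduce to ``square self-maps of $X$ or the contraction back to $\PP^3$''; a priori one reaches arbitrary $\PP^1$-bundles $X'=\PP(\cE)$ over $\PP^2$ carrying a copy of $D_X$, and the whole difficulty is to show that the only exit from the $\PP^2$-fibred world is the contraction back to $\PP^3$. The paper does this in three steps that have no counterpart in your proposal: (i) type (II) links over $\PP^2$ keep the $\PP^1$-bundle structure and restrict to isomorphisms of $D$ (normality of the image forces the centre $\cC$ to map birationally to its image in $\PP^2$, and \cite{AR14} extends the bundle structure over the finitely many bad points); (ii) a $K$-negative small ray on such an $X'$ is impossible, because the flipping curve must be one of the $(-2)$-curves $e,e'\subset D_{X'}$, its intersection with $D_{X'}$ is even, and Lemma~\ref{lem:inverse flips} then shows the antiflip has non-terminal singularities; (iii) hence $X'$ is Fano or weak Fano, and the classification of such bundles (Theorem~\ref{thm:weakFanoVectorBundles}) together with the comparison of $\Pic(D_{X'})$ with the image of $\Pic(X')$ --- surjectivity being supplied by the topological Proposition~\ref{pic} --- forces $\cE\cong\OO_{\PP^2}\oplus\OO_{\PP^2}(-1)$, i.e.\ the blow-up of $\PP^3$. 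Your proposed ``clean way'' (forcing centres of extractions into $\{z\}\cup\Sing D$) cannot work: the relevant centres are curves, and it is not the extraction step but the termination of the two-ray game, steps (ii)--(iii), where the class-group hypothesis does its work. (Similarly, on the $\PP^3$ side, that the only extraction is the ordinary blow-up of $z$ is not automatic: it needs Proposition~\ref{codim2} to exclude curve centres and Kawakita's classification as in Lemma~\ref{buA1}; you assert it.)

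In part (2) there is a second, smaller gap: you obtain the splitting by ``sending each element to its canonical lift,'' but the nontrivial point is precisely that this assignment is a group homomorphism, i.e.\ that $\varphi\circ h\circ\varphi$ is biregular for every $h\in\Aut(\PP^3,D)$, where $\varphi$ is the explicit birational involution inducing $\tau$. The paper proves this by writing out the Sarkisov factorization of $\varphi$ and checking that each $h$ lifts through all intermediate models; some argument of this kind is required and is missing from your sketch. Your identification of the kernel with the solutions of $U^2-aV^2=1$ is correct and agrees with the paper's computation of the stabilizer of the quadratic form in $\PGL_2\bigl(\CC(x,y)\bigr)$.
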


\begin{rem}
  \label{rem:class_rankA1}
  Assumptions~(i) and~(ii) are satisfied for a very general quartic surface $D$ with
  an $A_1$-singularity. Indeed, consider the moduli space of lattice-polarized K3 surfaces containing the
  rank-$2$ lattice~$N=\ZZ^2$ with quadratic form
  \[
  \left(\begin{array}{cc}
4 & 0 \\
0 & -2
\end{array}\right).
  \]
 in their Picard group. A very general element of this moduli space is
 the minimal resolution of a quartic surface $D$ satisfying
 assumptions~(i) and~(ii). 
\end{rem}

\begin{thmC}
  \label{thm:A2singularity}
 Let $D\subset \PP^3$ be a quartic surface. Assume the following:
 \begin{enumerate}[(i)]
 \item $D$ is smooth apart from a single $A_2$-singularity $z\in D$,
 \item $\Cl D = \ZZ \cdot \OO_D(1)$.
 \end{enumerate}
 Then the pliability of the pair $(\PP^3, D)$ is the
(infinite) set consisting of square equivalence classes of the following Mf CY pairs:
objects~$1$, $2$, $3^a$, $3^b$, the
$3$-parameter family of objects~$4$, and the $7$-parameter family of
objects~$5^a$ constructed and displayed in
Table~\ref{table:1}.  The
paragraph following Remark~\ref{rem:class_rankA2} explains how to read Table~\ref{table:1}.
\end{thmC}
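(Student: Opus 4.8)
The plan is to apply the volume preserving Sarkisov program of \cite{CK16} to the Mf CY pair $(\PP^3,D)$ and systematically enumerate all Mf CY pairs that can appear, exactly as was done in the $A_1$ case for Theorem B. First I would set up the starting geometry: the $A_2$-singularity $z\in D$ is a Du Val singularity whose minimal resolution introduces an $A_2$-chain of $(-2)$-curves, and the blow-up $X_1\to\PP^3$ of $z$, with strict transform $D_1\subset X_1$, gives a volume preserving morphism; projection from $z$ realizes $X_1$ as a $\PP^1$-bundle (or a conic bundle, after a further small modification) over $\PP^2$. Because the tangent cone of $D$ at an $A_2$-point is a double plane, the restriction $D_1\to\PP^2$ is again generically finite of degree $2$, but now the branch divisor and the fibre structure are richer; I would record the discriminant and the position of the two points of $D_1$ lying over $z$. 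This produces objects $1$ (the original pair) and $2$ (the conic bundle over $\PP^2$) in Table~\ref{table:1}.

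Next I would run the Sarkisov program starting from the Mf CY pair $X_1\to\PP^2$ and, at each Mf CY pair reached, enumerate all volume preserving Sarkisov links emanating from it. By the factorization theorem, every element of the pliability is reached by a chain of such links, so the pliability is the connected component of $(\PP^3,D)$ in the graph of Mf CY pairs and links. The key constraint that makes this enumeration finite-to-describe is assumption (ii): $\Cl D=\ZZ\cdot\OO_D(1)$ forces the strict transform $D_i\subset X_i$ in every Mf CY pair to have class group of rank governed only by the exceptional curves introduced over $z$, which severely limits the possible extremal contractions. Concretely I would: (a) classify the extremal rays on each $X_i$ that give divisorial contractions, flips, or changes of fibration; (b) check at each step which links are volume preserving, i.e. which preserve the condition that $D_i$ is the strict transform of $D$ and that $(X_i,D_i)$ is $(\text{t},\text{lc})$; and (c) track the birational type of $D_i$ and its fibration over the base. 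This is where objects $3^a$, $3^b$, the $3$-parameter family $4$, and the $6$-parameter family $5^a$ emerge: the parameters should arise from the choices available when contracting or extracting along the $A_2$-exceptional locus and its strict transforms in $D$, together with moduli of the branch data of the degree $2$ maps. I would then verify that the graph closes up — no further new Mf CY pairs appear — and that distinct table entries are genuinely non-square-equivalent, using $\dim S_Y$, the degree of $D_Y\to S_Y$, and discrete invariants of $D_Y$ to separate them.

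The main obstacle is step (b)–(c): controlling the volume preserving Sarkisov links out of the more singular intermediate pairs. In the $A_1$ case the exceptional locus over $z$ is a single $\PP^1$ and there are essentially two Mf structures; with an $A_2$-singularity the resolution graph is an $A_2$-chain, so there are more extremal contractions to analyze, more $2$-ray games to play on each $X_i$, and the strict transform $D_i$ can acquire worse (but still log canonical) singularities along the way, which one must check do not obstruct the links or force the pair out of the $(\text{t},\text{lc})$ world. Keeping the bookkeeping honest — proving the enumeration is exhaustive rather than merely producing many examples — will require a careful case analysis of the cone of curves of each $X_i$ relative to $D_i$ and $-K_{X_i}$, organized so that every branch of the $2$-ray game terminates in a member of Table~\ref{table:1}. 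A secondary obstacle is identifying the parameter counts $3$ and $6$ precisely: these should follow from computing the dimension of the relevant linear systems (or moduli of branch sextics/fibrations) cut out on the base, but pinning down that no hidden identifications collapse the families needs the non-square-equivalence invariants from the last step.
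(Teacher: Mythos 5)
Your overall architecture is the one the paper uses: factor any volume preserving map out of $(\PP^3,D)$ into volume preserving Sarkisov links (Theorem~\ref{fact}) and show by induction along the chain that every Mf CY pair reached is one of the objects of Table~\ref{table:1} (or is joined to object~$2$ by type~(II) links), so that the pliability is the ``connected component'' of $(\PP^3,D)$ in the graph of links. But as written the proposal stops exactly where the proof begins: the exhaustiveness of the enumeration of links out of each object is the entire content of Theorem~C, and you supply no mechanism for it. Concretely, the paper needs: (a) the classification of volume preserving divisorial contractions centred at points --- Lemma~\ref{buA1}, which rests on Kawakita's theorem for smooth points plus a valuation-tower argument pinning the weights to $(1,1,1)$, $(2,1,1)$, $(1,2,1)$, and Kawamata's and Kawakita's classifications for the $\tfrac12(1,1,1)$ and $cA_2$ points of objects~$4$ and~$5^a$ (Lemmas~\ref{lem:Link4} and~\ref{lem:Link5}), where the key observation is that $\Cl(X^\dagger)\to\Cl(D^\dagger)$ is an isomorphism so Proposition~\ref{codim2} rules out curve centres; (b) the extremal-neighbourhood analysis of Lemmas~\ref{lem:trivial_neighbourhoods}, \ref{lem:BadAntiflips} and \ref{lem:A2-nbds}, needed to show that the candidate antiflips out of the $\PP^1$-bundle models produce strictly canonical singularities and hence are forbidden; (c) the comparison of the rank-$3$ lattice $\Pic(D_X)$ with the image of $\Pic(X')$, combined with the classification of Fano and weak Fano rank-$2$ bundles on $\PP^2$ (Theorem~\ref{thm:weakFanoVectorBundles}), which is what actually bounds the possible $\PP^1$-bundles and produces objects $3^a$, $3^b$ in Lemma~\ref{lem:Link2}; and (d) on $(\PP(1,1,1,2),D_5^a)$, a Riemann--Roch analysis of the finitely many admissible curve classes $a\overline{e}_1+2be_1$, including the delicate exclusion of $(a,b)=(3,2)$ by an explicit toric two-ray game (Lemma~\ref{lem:Link3}). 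None of these ingredients, nor substitutes for them, appear in your plan; ``classify the extremal rays and check which links are volume preserving'' is the statement of the problem, not its solution.

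Two further points. The tangent cone of $D$ at an $A_2$-point is a pair of \emph{distinct} planes ($x_0x_1=0$), not a double plane; this is not cosmetic, since the two planes give the two $(-2)$-curves $e_0,e_1$ over $z$, and the whole asymmetry of the table (objects $3^a$ versus $3^b$, the weighted blow-ups $(2,1,1)$ versus $(1,2,1)$) comes from choosing one of them. Likewise object~$2$ is simply the blow-up $\FF^3_1\to\PP^2$ with its $\PP^1$-bundle structure; no conic bundle or further small modification is involved. Finally, since the statement is ``modulo isomorphism'', besides exhaustiveness one must also exhibit the identifications the paper makes explicitly --- the square equivalences of objects $2$, $2^a$, $2^b$, the isomorphism of the families $5^a$ and $5^b$ (Example~\ref{rem:objects5}), and the explicit links $\sigma,\epsilon_\bullet,\chi^\bullet,\phi^\bullet,\psi^\bullet$ of Section~\ref{sec:sarkisov} showing that every listed object genuinely lies in the pliability; your plan mentions separating objects by invariants but not constructing these maps.
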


\begin{rem}
  \label{rem:class_rankA2}
  Assumptions~(i) and~(ii) are satisfied for a very general quartic surface $D$ with
  an $A_2$-singularity. Indeed, consider the moduli space of
  lattice-polarized K3 surfaces containing the rank-$3$
  lattice~$N=\ZZ^3$ with quadratic form
  \[
\left(\begin{array}{ccc}
4 & 0 & 0 \\
0 & -2 & 1\\
0 & 1 & -2
\end{array}\right)
\] 
in their Picard group. A very general element of this moduli space is
 the minimal resolution of a quartic surface $D$ satisfying
 assumptions~(i) and~(ii). 
\end{rem}

\begin{say}
In this paper we often work with hypersurfaces and complete intersections in \emph{geometric invariant theory} (GIT) quotients $F=\CC^m/\!/_\omega\TT$. We summarise our key notation and conventions for working with these. Fix a lattice $\LL\cong \ZZ^r$ and let $\TT=\Spec \CC[\LL^\star]$ be the $r$-dimensional torus with character group $\LL^\star = \Hom (\LL,\ZZ)$.\footnote{The odd-looking notation is compatible with common practice in toric geometry.} Consider the action of $\TT$ on $\CC^m$ given by a lattice homomorphism $\ZZ^m \to \LL^\star$. We typically choose a basis of $\LL^\star$ and represent this homomorphism by an $r\times m$ integer matrix $D$ --- the matrix of \emph{weights} --- whose columns we denote by $D_i$, $i=1,\dots, m$ (we often abuse notation and identify $D_i$ with the corresponding homogeneous coordinate function $x_i$ on $\CC^m$). 
Denote by $\cC = \langle D_1,\dots, D_m \rangle_+\subset \LL^\star\otimes \RR$ the convex cone spanned by the columns of $D$. An element $\omega \in \cC$ is called a \emph{stability condition.} There is a wall-and-chamber decomposition of $\cC$, where the walls are the $(r-1)$-dimensional cones of the form $\langle D_{i_1},\dots D_{i_{r-1}} \rangle_+$. If $\omega$ is in the interior of a chamber, the \emph{irrelevant ideal} $I_\omega \subset \CC[x_1,\dots, x_m]$ is
\[
I_\omega = \left(x_{i_1}\cdots x_{i_r} \mid \omega \in \langle D_{i_1},\dots D_{i_r} \rangle_+\right)\,,
\]
the \emph{unstable locus} is $Z_\omega=V(I_\omega)\subset \CC^m$, and the GIT quotient is
\[
F=\CC^m/\!/_\omega \TT= \left(\CC^m\setminus Z_\omega\right)/\TT \,.
\]
The irrelevant ideal, unstable locus, and GIT quotient only depend on the chamber containing $\omega$ and this chamber is identified with the ample cone of $F$. By construction, $F$ is covered by affine open subsets
\[
U_{i_1\cdots i_r} =
\bigl\{ \mathbf{a}\in \CC^m\mid a_j\neq 0 \ \forall j\in\{i_1,\dots,i_r\}\bigr\} /\TT \, ,
\] 
indexed by tuples $i_1,\dots, i_r$ such that $\omega \in \langle D_{i_1},\dots, D_{i_r} \rangle_+$.
Moreover, it is clear that
\[
U_{i_1\cdots i_r} =
V_{i_1\cdots i_r}/G,\quad \text{where} \quad V_{i_1\cdots i_r}=
\bigl\{ \mathbf{a}\in \CC^m\mid  a_j=1 \ \forall j\in\{i_1,\dots,i_r\}\bigr\}
\]
and $G$ is a finite abelian group (that needs to be computed explicitly in every case). 

A character $\chi \in \cC$ gives a line bundle $L_\chi$ on $F$ such that
\[
H^0\left(F, L_\chi\right) 
=
\bigl\{ f \in \CC[x_1, \dots, x_m] \mid \forall \,g\in \TT \, , \, \forall \, \mathbf{a}\in \CC^m \, , \, f(g\mathbf{a})=\chi(g) f(\mathbf{a})\bigr\} \,.
\]
In this paper we often fix a weight matrix, a character $\chi\in \LL^\star$, and work with a hypersurface $X=(f=0)$ where $f\in H^0(F,L_\chi)$ is an explicit polynomial.
\end{say}

\begin{center}
\begin{footnotesize}
\begin{longtable}{ccccc}
\caption{Mf CY pairs $(X^\dagger,D^\dagger)$ birational to $(\PP^3, D)$ where $D\subset \PP^3$
  is a quartic surface with one $A_2$-singularity.} \label{table:1} \\
\toprule
\multicolumn{1}{c}{Object}&
\multicolumn{1}{c}{$X^\dagger \subseteq$ Ambient}&\multicolumn{1}{c}{Ambient coords. \& wts.}&
\multicolumn{1}{c}{Eqn. of $X^\dagger$ in Ambient}&
\multicolumn{1}{c}{Eqn. of $D^\dagger$ in $X^\dagger$} \\
\midrule
\endfirsthead
\multicolumn{4}{l}{\tiny Continued from previous page.}\\
\addlinespace[1.7ex]
\midrule
\multicolumn{1}{c}{Object}&
\multicolumn{1}{c}{$X^\dagger \subseteq$ Ambient}&\multicolumn{1}{c}{Ambient coords. \& wts.}&
\multicolumn{1}{c}{Eqn. of $X^\dagger$  in Ambient}&
\multicolumn{1}{c}{Eqn. of $D^\dagger$  in $X^\dagger$} \\
\midrule
\endhead
\midrule
\multicolumn{4}{r}{\tiny Continued on next page.}\\
\endfoot
\bottomrule
\endlastfoot
\oddrow $1$& $X^\dagger = \PP^3$&
\vgap\calculatepadding{$\begin{array}{cccc}x_0&x_1&x_2&x_3 \\ \hline
                          1&1&1&1 \end{array}$}\padding&
$0$&$x_0x_1x_3^2+Bx_3+C$ \\
\evnrow $2$&$X^\dagger = \FF_1^3$ &
\vgap\calculatepadding{$\begin{array}{ccccc}x_0&x_1&x_2&x_3&x \\ \hline
                                            1&1&1&0&-1\\0&0&0&1&1 \end{array}$}\padding&
$0$ &$x_0x_1x_3^2+Bx_3x+Cx^2$ \\
\oddrow $2^a$&$X^\dagger = \FF_2^3$ &
\vgap\calculatepadding{$\begin{array}{ccccc}x_0&x_1&x_2&x_3&x \\ \hline
                                            1&1&1&0&-2\\0&0&0&1&1 \end{array}$}\padding&
$0$ &$x_0x_3^2+Bx_3x+x_1Cx^2$\\
\evnrow $2^b$&$X^\dagger = \FF_2^3$&
\vgap\calculatepadding{$\begin{array}{ccccc}x_0&x_1&x_2&x_3&x \\ \hline
                                            1&1&1&0&-2\\0&0&0&1&1 \end{array}$}\padding&
$0$&$x_1x_3^2+Bx_3x+x_0Cx^2$\\
\oddrow $3^a$&$X^\dagger = \PP(1^3,2)$&
\vgap\calculatepadding{$\begin{array}{cccc}x_0&x_1&x_2&y \\ \hline 
1&1&1&2 \end{array}$}\padding&
$0$&$x_0y^2+By+x_1C$ \\
\evnrow $3^b$& $X^\dagger = \PP(1^3,2)$&\vgap\calculatepadding{$\begin{array}{cccc}
x_0&x_1&x_2&y \\ \hline 1&1&1&2 \end{array}$}\padding&$0$
                                                                          &$x_1y^2+By+x_0C$
  \\
\oddrow $4$& $X_4\subset\PP(1^3,2^2)$&\vgap\calculatepadding{$\begin{array}{ccccc}
x_0&x_1&x_2&y_0&y_1 \\ \hline
                                                                1&1&1&2&2 \end{array}$}\padding&\begin{minipage}{4.5cm}\begin{multline*}
                                                                y_0y_1+C-\\-L(x_0y_1-x_1y_0-B) \end{multline*}\end{minipage}
                                                                          &$x_0y_1-x_1y_0-B$
  \\
\evnrow $5^a$& $X_4\subset\PP(1^4,2)$&\vgap\calculatepadding{$\begin{array}{ccccc}
x_0&x_1&x_2&x_3&y \\ \hline
                                                             1&1&1&1&2 \end{array}$}\padding
&\begin{minipage}{4.5cm}\begin{multline*}y(cy+Q)-C+\\+x_3\bigl((x_0+cx_1)y+x_1Q+B\bigr)\end{multline*}\end{minipage}&
$y+x_1x_3$
  \\
\oddrow $5^b$& $X_4\subset\PP(1^4,2)$&\vgap\calculatepadding{$\begin{array}{ccccc}
x_0&x_1&x_2&x_3&y \\ \hline
                                                             1&1&1&1&2 \end{array}$}\padding
&\begin{minipage}{4.5cm}\begin{multline*}y(cy-Q)-C+\\+x_3\bigl((x_0+cx_1)y-x_0Q+B\bigr)\end{multline*}\end{minipage}&
$y+x_0x_3$
  \\
\addlinespace[1.1ex]
\bottomrule
\end{longtable}
\end{footnotesize}
\end{center}

\paragraph{\textbf{How to read Table~\ref{table:1}}} 

The rows of Table~\ref{table:1} display pairs $(X^\dagger,D^\dagger)$
birational to the pair $(\PP^3, D)$ of Theorem~C. 

The first row of Table~\ref{table:1} displays the pair $(\PP^3,D)$ itself. We have
chosen homogeneous coordinates $x_0, \dots, x_3$ on $\PP^3$ such that
the singular point $z\in D$ is the point $z=[0:0:0:1]$, and the
tangent cone to $D$ at $z$ is $(x_0x_1=0)\subset \PP^3$. In these
coordinates, the equation of $D$ is written as
\[
D = \Bigl(x_0x_1x_3^2+Bx_3+C
=0\Bigr) \subset \PP^3,
\]
where $B=B_3(x_0,x_1,x_2)$ and $C=C_4(x_0,x_1,x_2)$ are homogeneous
forms of degree~$3$ and~$4$. The information displayed on the first
row is self-explanatory: the second column states that the pair
$(X^\dagger,D^\dagger)$ is contained in the ambient $\PP^3$, the
third column names homogeneous coordinates on $\PP^3$, all of weight one, the
fourth column displays the equation of $X^\dagger\subset \PP^3$ --- the zero
``equation'', since $X^\dagger =\PP^3$ in this
case --- and the last column displays the equation of $D^\dagger$ that we just
explained.

For all integers $k\geq 0$, we denote by
$\FF_k^3\to \PP^2$ the $\PP^1$-bundle $\PP\big(\cO\oplus \cO(k)\big)$ over $\PP^2$
with homogeneous coordinates and weights
\[
\begin{array}{ccccc}
x_0 & x_1 & x_2 & x_3 & x \\
\hline
1 & 1 & 1 & 0 & -k \\
0 & 0 & 0 & 1 &  1 
\end{array}
\]
The second row of the table displays object~$2$, that is, the pair
$(\FF^3_1,D^\dagger)$ where the equation of $D^\dagger$ is as
shown in the last column. 

In the row that displays object~$4$, $L=L(x_0,x_1,x_2)$ is a homogeneous linear
form; in the rows that display objects~$5^a$ and~$5^b$, $Q=Q(x_0,x_1,x_2)$ is
a homogeneous quadratic form and $c\neq 0$.  

\begin{rem}
  At the end of Section~\ref{sec:sarkisov}, we exhibit explicit volume
  preserving birational maps between the Mf CY pairs in
  Table~\ref{table:1}.  These maps are constructed as Sarkisov links,
  and this is how they naturally appear in the proof of Theorem~C. In
  particular, our constructions show that object~$2$ is square
  equivalent to objects~$2^a$ and~$2^b$. Moreover, in
  Example~\ref{rem:objects5} we show that families~$5^a$ and~$5^b$ are
  isomorphic. This is why objects~$2^a$, $2^b$, and the family
  $5^b$ appear in Table~\ref{table:1} while they are omitted in the
  statement of Theorem~C.
\end{rem}

\begin{rem}
  In order to prove Theorem~B and Theorem~C, we consider a Mf CY pair
  $(Y,D_Y)/T$, and a volume preserving birational map
  $\varphi\colon (\mathbb{P}^3,D)\dasharrow (Y,D_Y)$ that is not
  biregular. The proofs proceed by studying explicitly the links of a
  Sarkisov factorisation of $\varphi$. To control these links, and the
  divisorial contractions, flips, flops and antiflips that constitute
  them, we need some explicit classification results for divisorial
  contractions and analytic neighbourhoods of curves. We develop this
  material further than strictly needed for the proof of Theorem~B,
  enough for what we need in the proof of Theorem~C. These results,
  contained in Section~\ref{sec:extr-contr}, are of
  independent interest; they represent the first steps towards
  developing a technology for working explicitly with the volume
  preserving birational maps of \mbox{$3$-fold} Mf CY pairs.
\end{rem}

\paragraph{\textbf{Notation}}
    If $Z$ is a normal projective variety, and $D_1, D_2$ are $\RR$-Cartier divisors on $Z$, the notation $D_1\equiv D_2$ means $D_1$ is numerically equivalent to $D_2$.

    We denote by $N^1(Z)$ the vector space of $\RR$-Cartier divisors on $Z$ modulo numerical equivalence, and by $N_1(Z)$ the dual space of $1$-cycles with real coefficients on $Z$ modulo numerical equivalence.

    We denote by $\Eff (Z)\subset N^1(Z)$ the convex cone spanned by effective divisors, and by $\overline{\Eff} (Z)$ its closure. 

\medskip

\paragraph{\textbf{Acknowledgements}}

Carolina Araujo was partially supported by grants from CNPq, Faperj and CAPES/COFECUB. 
Part of this work was developed during the authors' visit
to ICTP, funded by Carolina Araujo's ICTP Simons Associateship. We
thank ICTP for the great working conditions, and Simons Foundation for
the financial support.

Alessio Corti was partially supported by EPSRC Programme Grant
EP/N03189X/1 \emph{Classification, computation and construction: new
  problems in geometry}. This research was started during a visit of
Alessio Corti to IMPA funded by CNPq Visiting Researcher
grant. 

Alex Massarenti is a member of the Gruppo Nazionale per le Strutture
Algebriche, Geometriche e le loro Applicazioni of the Istituto
Nazionale di Alta Matematica F. Severi (GNSAGA-INDAM).

We are particularly grateful to an anonymous referee who studied an earlier version very carefully and sent us a long list of corrections and recommendations. This revision benefits from those comments; the errors that remain are of course ours.

\section{The Sarkisov program for Mf CY pairs}
\label{sec:sarkisov}

In this section we review the factorisation theorem for volume preserving birational
maps between Mf CY pairs established in~\cite{CK16}. 
This is the main tool in the proof of our results.
We start by recalling a valuative interpretation of the volume preserving condition.  

\begin{pro}[{\cite[Remark~1.7]{CK16}}]
  \label{pro:equivalentconditions}
Let $(X,D_X)$ and $(Y,D_Y)$ be CY pairs, and $f \colon X \dasharrow Y$ an arbitrary birational map. 
The following conditions are equivalent:
\begin{enumerate}[(1)]
\item The map $f\colon (X,D_X)\dasharrow(Y,D_Y)$ is volume preserving.
\item For every divisorial valuation $E$, the discrepancies of $E$ with respect to the pairs $(X,D_X)$
  and $(Y,D_Y)$ are equal: $a(E, K_X+D_X)=a(E, K_Y+D_Y)$.
\item Let 
 \[
  \begin{tikzpicture}[xscale=1.5,yscale=-1.2]
    \node (A0_1) at (1, 0) {$W$};
    \node (A1_0) at (0, 1) {$X$};
    \node (A1_2) at (2, 1) {$Y$};
    \path (A1_0) edge [->,dashed]node [auto] {$\scriptstyle{f}$} (A1_2);
    \path (A0_1) edge [->]node [auto] {$\scriptstyle{q}$} (A1_2);
    \path (A0_1) edge [->]node [auto,swap] {$\scriptstyle{p}$} (A1_0);
  \end{tikzpicture}
  \]
  be a common log resolution of the pairs $(X,D_X)$ and $(Y,D_Y)$. The
  birational map $f$ induces an isomorphism
  $f_* \colon \Omega^n_{k(X)/k}\rightarrow\Omega^n_{k(Y)/k}$, where
  $n$ denotes the common dimension of $X$ and $Y$.  Abusing notation,
  we write
$$
p^{*}(K_X+D_X)= q^{*}(K_Y+D_Y)
$$
to mean that for all meromorphic differentials $\omega\in \Omega^n_{k(X)/k}$, we have
$$
p^*\big(D_X + \divisor_X(\omega)\big) = q^*\big(D_Y + \divisor_Y(f_*\omega)\big).
$$
The condition is: for some (or equivalently for all) common log
resolution as above, we have $p^{*}(K_X+D_X)= q^{*}(K_Y+D_Y)$.
\end{enumerate} 
\end{pro}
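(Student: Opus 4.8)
\emph{Step 1: the basic identity.} The plan is to route all three conditions through a single valuative invariant --- the order of vanishing of the volume form along a divisorial valuation. First I would establish that, for any CY pair $(X,D_X)$ with volume form $\omega_X:=\omega_{X,D_X}$ and any geometric valuation $E$ with centre on $X$,
\[
a(E,K_X+D_X)=\operatorname{ord}_E(\omega_X),
\]
where $\operatorname{ord}_E(\eta)$ denotes the order of vanishing of a rational $n$-form $\eta$ along $E$, computed on any model of $X$ on which $E$ is a prime divisor. Since $K_X+D_X\sim 0$ is Cartier, one may work with the representative $\divisor_X(\omega_X)+D_X=0$ of $K_X+D_X$ and with $\divisor_{X'}(\mu^{*}\omega_X)$ as representative of $K_{X'}$ on a model $\mu\colon X'\to X$ carrying $E$: the defining relation of the discrepancies becomes $\divisor_{X'}(\mu^{*}\omega_X)+\mu_{*}^{-1}D_X=\sum_F a(F,K_X+D_X)\,F$ (sum over $\mu$-exceptional $F$, with $\mu_{*}^{-1}D_X$ the strict transform), and comparing the coefficient of $E$ yields the claim. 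I would also record here the two facts that make the identity usable: $\operatorname{ord}_E(\eta)$ depends only on $\eta\in\Omega^n_{k(X)/k}$ and on $E$, not on the model chosen to compute it; and $f$ identifies $\Omega^n_{k(X)/k}$ with $\Omega^n_{k(Y)/k}$, so that $\operatorname{ord}_E(\omega_X)=\operatorname{ord}_E(f_{*}\omega_X)$ for every $E$ with a centre on both $X$ and $Y$.

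\emph{Step 2: $(1)\Leftrightarrow(2)$.} Next I would write $f_{*}\omega_X=g\cdot\omega_Y$ with $g\in k(Y)^{\times}$ --- legitimate since $\omega_Y:=\omega_{Y,D_Y}$ spans the one-dimensional space $\Omega^n_{k(Y)/k}$ --- so that, by Step~1, $a(E,K_X+D_X)-a(E,K_Y+D_Y)=\operatorname{ord}_E(f_{*}\omega_X)-\operatorname{ord}_E(\omega_Y)=\operatorname{ord}_E(g)$ for every geometric valuation $E$ with centre on both $X$ and $Y$. Since $X$ and $Y$ are proper, every divisorial valuation of $k(X)=k(Y)$ has a centre on each, so condition~(2) is equivalent to $\operatorname{ord}_E(g)=0$ for all divisorial valuations, hence (letting $E$ run over the prime divisors of $Y$) to $\divisor_Y(g)=0$, hence --- $Y$ being normal and projective --- to $g\in k^{\times}$, i.e.\ to $f$ being volume preserving.

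\emph{Step 3: $(1)\Leftrightarrow(3)$.} Finally I would fix a common log resolution $p\colon W\to X$, $q\colon W\to Y$ and unwind the stated convention. Evaluating the identity $p^{*}\bigl(D_X+\divisor(\omega)\bigr)=q^{*}\bigl(D_Y+\divisor(f_{*}\omega)\bigr)$ at $\omega=\omega_X$ makes the left-hand side $0$, while on the right $D_Y+\divisor_Y(f_{*}\omega_X)=D_Y+\divisor_Y(g)-D_Y=\divisor_Y(g)$, so the identity becomes $\divisor_W(g)=0$, i.e.\ $g\in k^{\times}$, i.e.\ $f$ volume preserving. Conversely, if $g\in k^{\times}$, then for an arbitrary $\omega=h\cdot\omega_X$ with $h\in k(X)^{\times}$ one has $D_X+\divisor(\omega)=\divisor_X(h)$ and, using $f_{*}(h\omega_X)=hg\,\omega_Y$, also $D_Y+\divisor(f_{*}\omega)=\divisor_Y(h)$; both pull back to the divisor $\divisor_W(h)$ on $W$, so the identity of~(3) holds, and since nothing depended on the resolution the ``for some $\Leftrightarrow$ for all'' clause follows. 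The genuinely delicate point is the preparatory Step~1 --- fixing the normalisation $\divisor_X(\omega_{X,D_X})=-D_X$ of the volume form and checking that $\operatorname{ord}_E$ is well-defined independently of the model used to compute it --- but these are standard facts about discrepancies and valuations of function fields; granting them, the rest is the bookkeeping above, for which I would refer to \cite{CK16, Ko13}.
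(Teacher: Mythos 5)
Your argument is correct, and there is nothing in the paper to compare it against line by line: the paper states this proposition as a recollection from \cite[Remark~1.7]{CK16} and gives no proof of its own. Your route --- first establishing $a(E,K_X+D_X)=\operatorname{ord}_E(\omega_{X,D_X})$ for the distinguished volume form, then writing $f_*\omega_{X,D_X}=g\,\omega_{Y,D_Y}$ and observing that each of (1), (2), (3) unwinds to the single condition $\divisor(g)=0$, hence $g\in\CC^\times$ on a normal projective variety --- is precisely the standard verification underlying that remark, and the two points you single out as needing care (model-independence of $\operatorname{ord}_E$, and properness making the centre condition in (2) vacuous so that prime divisors of $Y$ are allowed test valuations) are indeed the only nontrivial checks; both are handled correctly.
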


\begin{say}\label{sec:terminology-1}
A \emph{Mori divisorial contraction} is a divisorial
contraction $f\colon Z \to X$ from a $\QQ$-factorial terminal
variety $Z$, associated to an extremal ray $R\subset \NEbar(Z)$ such that $K_Z\cdot R<0$. In
particular, $X$ also has $\QQ$-factorial terminal singularities.

If $(Z, D_Z)$ and $(X,D_X)$ are (t,~lc) CY pairs, then a Mori divisorial contraction $f\colon Z \to X$
is volume preserving as a map of CY pairs if and only if 
$K_Z+D_Z=f^* (K_X+D_X)$, in the sense of Proposition~\ref{pro:equivalentconditions}(3).
In this case, we have $D_X=f_* D_Z$.

A \emph{Mori flip} is a flip $\varphi\colon Z \dasharrow Z^\prime$
from a $\QQ$-factorial terminal variety $Z$, associated to an extremal
ray $R\subset \NEbar(Z)$ such that $K_Z\cdot R<0$.  In particular,
$Z^\prime$ also has $\QQ$-factorial terminal singularities.  An
\emph{antiflip} is the inverse of a Mori flip.  A \emph{Mori flop} is
a flop $\varphi\colon Z \dasharrow Z^\prime$ between $\QQ$-factorial
terminal varieties, associated to an extremal ray $R\subset \NEbar(Z)$
such that $K_Z\cdot R=0$.

Let $(Z,D_Z)$ and $(Z^\prime, D_{Z^\prime})$ be (t,~lc) CY pairs, and 
$\varphi\colon Z \dasharrow Z^\prime$ a Mori flip, flop or antiflip.
Then $\varphi\colon (Z,D_Z) \dasharrow (Z^\prime, D_{Z^\prime})$
is volume preserving if and only if $D_{Z^\prime}=\varphi_* D_Z$.
\end{say}

Next we recall the definition of the four types of Sarkisov links from \cite{Co95}. 
These links are constructed by running a relative log MMP with respect to a suitable fibration of relative Picard rank $2$, called a 
\emph{rank 2 fibration}. 
This rank 2 fibration can be obtained from a Mori fiber space $X\to S$ in two different ways. 
In the first two types of link, the rank 2 fibration $Z\to S$ is obtained by composing a suitable Mori divisorial contraction $Z\to X$ with the Mori fiber space structure $X\to S$. 
In the remaining two types of link, the rank 2 fibration $X\to T$ is obtained by composing the Mori fiber space structure $X\to S$ with a suitable fibration $S\rightarrow T$ of relative Picard rank $1$.
In either case, the link is completely determined by the rank 2 fibration by running a relative log MMP. This process is usually referred to as the \emph{2-ray game}.

\begin{say}[Sarkisov links]\label{links}
In what follows, $X\to S$ and  $X'\to S'$ always stand for Mori fiber spaces. 
 \begin{enumerate}[(I)]
  \item A \emph{Sarkisov link of type~(I)} is a commutative diagram
  \[
  \begin{tikzpicture}[xscale=1.5,yscale=-1.2]
    \node (A0_0) at (0, 0) {};
    \node (A0_1) at (1, 0) {$Z$};
    \node (A0_2) at (2, 0) {$X'$};
    \node (A1_0) at (0, 1) {$X$};
    \node (A1_2) at (2, 1) {$S'$};
    \node (A2_0) at (0, 2) {$S$};
    \path (A1_2) edge [->]node [auto] {$\scriptstyle{}$} (A2_0);
    \path (A0_1) edge [->]node [auto] {$\scriptstyle{}$} (A1_0);
    \path (A0_1) edge [->,dashed]node [auto] {$\scriptstyle{}$} (A0_2);
    \path (A0_2) edge [->]node [auto] {$\scriptstyle{}$} (A1_2);
    \path (A1_0) edge [->]node [auto] {$\scriptstyle{}$} (A2_0);
  \end{tikzpicture}
  \]
where $Z\to X$ is a Mori divisorial contraction, and
$Z\dasharrow X^\prime$ is a sequence of Mori flips,
flops and antiflips.
\item A \emph{Sarkisov link of type~(II)} is a commutative diagram
  \[
  \begin{tikzpicture}[xscale=1.5,yscale=-1.2]
    \node (A0_1) at (1, 0) {$Z$};
    \node (A0_2) at (2, 0) {$Z'$};
    \node (A1_0) at (0, 1) {$X$};
    \node (A1_3) at (3, 1) {$X'$};
    \node (A2_0) at (0, 2) {$S$};
    \node (A2_3) at (3, 2) {$S$};
    \path (A0_1) edge [->]node [auto] {$\scriptstyle{}$} (A1_0);
    \path (A1_3) edge [->]node [auto] {$\scriptstyle{}$} (A2_3);
    \path (A2_0) edge [-,double distance=1.5pt]node [auto] {$\scriptstyle{}$} (A2_3);
    \path (A1_0) edge [->]node [auto] {$\scriptstyle{}$} (A2_0);
    \path (A0_2) edge [->]node [auto] {$\scriptstyle{}$} (A1_3);
    \path (A0_1) edge [->,dashed]node [auto] {$\scriptstyle{}$} (A0_2);
  \end{tikzpicture}
  \]
where $Z\to X$ and $Z' \to X'$ are Mori divisorial
contractions, and $Z\dasharrow Z'$ is a sequence of Mori flips,
flops and antiflips.
Moreover, under their natural identification, the image of the effective cone $\Eff(Z)=\Eff(Z')$ in $N^1(Z/S)=N^1(Z'/S)$ is generated by the classes of the exceptional divisors of $Z\to X$ and $Z' \to X'$.
\item A \emph{Sarkisov link of type~(III)} is the inverse of a link of type~(I).
\item A \emph{Sarkisov link of type~(IV)} is a commutative diagram
  \[
  \begin{tikzpicture}[xscale=1.5,yscale=-1.2]
    \node (A0_0) at (0, 0) {$X$};
    \node (A0_2) at (2, 0) {$X'$};
    \node (A1_0) at (0, 1) {$S$};
    \node (A1_2) at (2, 1) {$S'$};
    \node (A2_1) at (1, 2) {$T$};
    \path (A1_2) edge [->]node [auto] {$\scriptstyle{}$} (A2_1);
    \path (A0_0) edge [->]node [auto] {$\scriptstyle{}$} (A1_0);
    \path (A1_0) edge [->]node [auto] {$\scriptstyle{}$} (A2_1);
    \path (A0_2) edge [->]node [auto] {$\scriptstyle{}$} (A1_2);
    \path (A0_0) edge [->,dashed]node [auto] {$\scriptstyle{}$} (A0_2);
  \end{tikzpicture}
  \]
  where $X\dasharrow X'$ is a sequence of Mori flips, flops and
  antiflips, and $S\rightarrow T$ and $S'\rightarrow T$ are fibrations of relative Picard rank $1$.
  Moreover, under their natural identification, the image of the effective cone $\Eff(X)=\Eff(X')$ in $N^1(X/T)=N^1(X'/T)$ is generated by the pullback classes of ample divisor on $S$ and $S'$.
  \end{enumerate}
\end{say}

It is shown in \cite{Co95} and \cite{HM13} that every birational map
between Mori fiber spaces is a composition of Sarkisov links.  Next we
explain the version of the Sarkisov program for volume preserving
birational maps between Mf CY pairs established in~\cite{CK16}.

\begin{Definition}[{\cite[Definition 1.12 and Remark 1.13]{CK16}}] \label{dfn:4} A \emph{volume preserving Sarkisov
    link} is a Sarkisov link as described in Paragraph~\ref{links}
  above with the following additional data and property: there are
  divisors $D_X$ on $X$, $D_{X'}$ on $X'$, $D_Z$ on $Z$, and $D_{Z'}$
  on $Z'$, making $(X,D_X)$, $(X',D_{X'})$, $(Z,D_Z)$ and
  $(Z',D_{Z'})$ (t,~lc) CY pairs, and all the divisorial contractions,
  Mori flips, flops and antiflips that constitute the Sarkisov link
  are volume preserving for these CY pairs.
\end{Definition}

At the end of this section we will construct explicit volume
preserving Sarkisov links between the Mf CY pairs displayed in Table~1
of the introduction.

\begin{thm}[{\cite[Theorem 1.1]{CK16}}]\label{fact}
  A volume preserving birational map between Mf CY pairs is a
  composition of volume preserving Sarkisov links.
\end{thm}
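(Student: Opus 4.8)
The statement to be proved is the volume preserving Sarkisov program: a volume preserving birational map $\varphi\colon (X,D_X)\dasharrow (Y,D_Y)$ between Mf CY pairs factors as a composition of volume preserving Sarkisov links. This is quoted here from \cite{CK16}, so the plan is to reconstruct the argument given there rather than invent a new one. The strategy is to run the ordinary Sarkisov program of \cite{Co95,HM13} on $\varphi$ — which produces \emph{some} factorization into Sarkisov links through intermediate Mori fibre spaces $X=X_0\dasharrow X_1\dasharrow\cdots\dasharrow X_r=Y$ — and then to upgrade each link to a volume preserving one by equipping every intermediate variety with the correct boundary divisor and checking that each elementary operation (divisorial contraction, flip, flop, antiflip) composing the link is volume preserving in the sense of \ref{sec:terminology-1}.

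The key steps, in order, would be as follows. First, recall from the Sarkisov machinery that each link is built by choosing a suitable mobile linear system $\cH$ on $X$ (a multiple of the pullback of an ample system from the target $Y$), computing the Sarkisov degree, and extracting a maximal divisorial valuation $E$ via a Mori divisorial contraction $Z\to X$; one then runs a relative MMP over the appropriate base, which consists of flips, flops and antiflips, terminating either in a new Mori fibre space (links of type I, II) or fibre-type contraction (type IV), possibly after a final divisorial contraction. Second — the crucial point — on the starting variety $X_i$ the pair $(X_i,D_{X_i})$ is CY and (t,~lc), and one defines on the blow-up $Z$ the divisor $D_Z$ by the crepant formula $K_Z+D_Z=f^*(K_{X_i}+D_{X_i})$; because $(X_i,D_{X_i})$ is log canonical, $D_Z$ is again an effective integral (after absorbing the discrepancy, using that $f$ is a \emph{Mori} contraction so $a(E,K_X)>0$ while $a(E,K_X+D_X)\ge -1$) boundary making $(Z,D_Z)$ a (t,~lc) CY pair, and $Z\to X_i$ is volume preserving by construction. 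Third, one observes that each step of the relative MMP is $(K_Z+D_Z)$-trivial — indeed $K_Z+D_Z\sim 0$ — so every flip/flop/antiflip in the link is automatically crepant, hence volume preserving by \ref{sec:terminology-1}, provided one propagates the boundary as the strict transform at each step; and the final (divisorial or fibre-type) contraction is again crepant for the same numerical reason. This shows the constructed link is a volume preserving Sarkisov link in the sense of Definition \ref{dfn:4}, and that the boundary arriving on $X_{i+1}$ is the strict transform of $D_{X_i}$, which by volume preservation of $\varphi$ coincides with the boundary one would get from $(Y,D_Y)$; induction along the chain then identifies the terminal boundary with $D_Y$, closing the factorization.

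The main obstacle is the compatibility between the ordinary Sarkisov program and the volume preserving structure: one must ensure that the \emph{same} sequence of links that factors $\varphi$ as a plain birational map can be run so that every intermediate center, every extracted valuation, and every MMP step is crepant for the log canonical CY pair, i.e. that no step forces an operation with $(K+D)\cdot R\neq 0$. This requires that the mobile systems chosen in the Sarkisov algorithm can be taken $\mathbb{Q}$-linearly equivalent to multiples of $-(K_{X_i}+\text{(part of }D))$ adapted to the boundary — equivalently, that the Sarkisov invariants computed with $\cH$ and those computed crepantly agree — together with the subtle point (handled in \cite{CK16} via the log canonical hypothesis and Proposition \ref{pro:equivalentconditions}) that the boundary remains effective and integral, and the pairs remain (t,~lc), all the way along. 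Managing termination of the relative log MMP in the crepant setting, and verifying that "untwisting" a maximal valuation is compatible with the volume form, are where the real work lies; everything else is bookkeeping with discrepancies via Proposition \ref{pro:equivalentconditions}.
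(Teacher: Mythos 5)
The paper does not prove Theorem~\ref{fact} at all: it is quoted verbatim from \cite[Theorem 1.1]{CK16}, so there is no in-paper argument to compare yours against. Judged on its own terms, your sketch also follows a different route from the one actually taken in \cite{CK16}: you run the classical Sarkisov program of \cite{Co95} (mobile system, Sarkisov degree, extraction of a maximal valuation, untwisting) and then try to ``upgrade'' each link to a volume preserving one, whereas \cite{CK16} adapts the Hacon--McKernan proof \cite{HM13}, producing the factorization from the geography of ample models attached to a single common log resolution $W$ on which $p^*(K_X+D_X)=q^*(K_Y+D_Y)$; that choice is what guarantees from the start that every model occurring in the factorization carries a crepant (t,\,lc) CY boundary.

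The upgrade step as you state it has a genuine gap, and it sits exactly where the content of the theorem lies. You claim that, for the Mori divisorial contraction $f\colon Z\to X_i$ extracting the maximal valuation $E$, the crepant boundary $D_Z=f^{-1}_*D_{X_i}+\bigl(-a(E,K_{X_i}+D_{X_i})\bigr)E$ is effective and integral ``because $(X_i,D_{X_i})$ is log canonical.'' Log canonicity only gives $a(E,K_{X_i}+D_{X_i})\geq -1$; it gives neither $a(E,K_{X_i}+D_{X_i})\leq 0$ nor integrality, and the valuation singled out by the classical untwisting procedure is chosen to be maximal for the mobile system $\cH$, not for the pair $(X_i,D_{X_i})$, so in general it has positive log discrepancy and the crepant pullback fails to be an effective integral divisor. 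Consequently an arbitrary Sarkisov factorization of $\varphi$ need not admit a volume preserving structure in the sense of Definition~\ref{dfn:4}; one must \emph{construct} a factorization all of whose divisorial extractions are crepant for the CY pairs, which is precisely what the finiteness-of-ample-models argument of \cite{CK16} achieves and what your final paragraph defers back to \cite{CK16} without resolving. (Your third step is fine: once the intermediate pairs are honest (t,\,lc) CY pairs with $K+D\sim 0$, the flips, flops, antiflips and the terminal contraction of each link are automatically volume preserving with the strict-transform boundary, as in Paragraph~\ref{sec:terminology-1}.) As it stands, then, the proposal is a plan whose crucial step is both incorrectly justified and different in kind from the proof it purports to reconstruct.
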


Theorem~\ref{fact} provides an effective tool to investigate the group
$\Bir(X,D)$ when $(X,D)$ is a Mf CY pair. In this paper we restrict
ourselves to canonical pairs. This is the case, for instance, when $X$
is smooth and $D\subset X$ is an irreducible hypersurface with
canonical singularities.  In this case, the theory is greatly
simplified for the following reason.

\begin{Proposition}\label{group1}\label{group2} \label{group3}
  Let $(X,D_X)$ and $(Y,D_Y)$ be (t,~c) CY pairs, and
  $f\colon X\dasharrow Y$ an arbitrary birational map.  Then
  $f\colon (X,D_X)\dasharrow(Y,D_Y)$ is volume preserving if and only
  if $f_{*}D_X = D_Y$ and $f^{-1}_{*}D_Y = D_X$. (This condition is
  equivalent to asking that the restriction of $f$ to each component
  of $D_X$ is a birational map to a component of $D_Y$, and the same
  for $f^{-1}$).

In particular, if $(X,D)$ is a (t,~c)  CY pair with $D$ irreducible, then
$\Bir (X,D)$ coincides with the group of birational self-maps $f\colon X \dasharrow
X$ such that $f(D)= D$. 
\end{Proposition}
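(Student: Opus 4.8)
The plan is to reduce the volume-preserving condition to a discrepancy comparison via Proposition~\ref{pro:equivalentconditions}(2), and then exploit the fact that, for a (t,~c) pair, the divisor $D$ is precisely the locus of log discrepancy $0$ among divisors of log discrepancy $\leq 0$ — indeed, since $X$ is terminal, every divisorial valuation $E$ with $a(E,K_X)\leq 0$ has $a(E,K_X)$ equal to $0$ only when $E$ is a component of $D$ itself, while the canonicity of $(X,D_X)$ forces $a(E,K_X+D_X)\geq 0$ for all $E$ with small centre. First I would record the elementary direction: if $f$ is volume preserving, then for the components $E$ of $D_X$ we have $a(E,K_X+D_X)=-\mult_E D_X<0$ (they are divisors \emph{on} $X$, so $a(E,K_X)=0$), hence by condition~(2) the valuation $E$ must have negative log discrepancy for $(Y,D_Y)$ as well; since $Y$ is terminal this is only possible if $E$ is a component of $D_Y$, i.e.\ $f_* D_X\leq D_Y$ as divisors, and comparing coefficients (which are equal by~(2)) gives $f_*D_X = D_Y$ after accounting for components of $D_X$ contracted by $f$. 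The symmetric argument applied to $f^{-1}$ gives $f^{-1}_* D_Y = D_X$; together these two equalities also show that no component of $D_X$ is contracted (its image would have to be a component of $D_Y$ whose preimage is a divisor, forcing it back), so the restriction statement in parentheses follows.

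For the converse, suppose $f_* D_X = D_Y$ and $f^{-1}_* D_Y = D_X$. I would take a common log resolution $p\colon W\to X$, $q\colon W\to Y$ as in Proposition~\ref{pro:equivalentconditions}(3) and compare $p^*(K_X+D_X)$ with $q^*(K_Y+D_Y)$ divisor by divisor on $W$. Write $K_W = p^*K_X + \sum a(E_i,K_X)\,E_i$ and similarly $K_W = q^*K_Y + \sum a(E_i,K_Y)\,E_i$, and expand $p^*D_X$, $q^*D_Y$ in terms of the $E_i$. The strict transforms of components of $D_X$ and of $D_Y$ coincide on $W$ by the hypothesis $f_*D_X=D_Y$, so the only possible discrepancy between $p^*(K_X+D_X)$ and $q^*(K_Y+D_Y)$ is concentrated on $p$-exceptional or $q$-exceptional divisors $E_i$ with small centre on $X$ or on $Y$. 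For such an $E_i$ we have $a(E_i,K_X+D_X)\geq 0$ by canonicity of $(X,D_X)$ and $a(E_i,K_Y+D_Y)\geq 0$ by canonicity of $(Y,D_Y)$; on the other hand since $X$ and $Y$ are \emph{terminal} we also have $a(E_i,K_X)>0$ whenever $E_i$ has small centre on $X$, and $a(E_i,K_Y)>0$ whenever $E_i$ has small centre on $Y$. The coefficient of $E_i$ in $p^*(K_X+D_X)-q^*(K_Y+D_Y)$ is then a difference of these quantities that one checks, case by case according to whether the centre of $E_i$ on $X$ (resp.\ $Y$) is small or divisorial, must vanish — the key point being that a divisor appearing with a strictly negative log discrepancy for one pair must be a component of the boundary there, but being a component of $D_X$ forces it (via $f_*D_X=D_Y$) to be a component of $D_Y$ and vice versa, while otherwise both log discrepancies are $\geq 0$ and equal to the corresponding $a(E_i,K_\bullet)>0$ on the respective sides, which forces the equality $a(E_i,K_X+D_X)=a(E_i,K_Y+D_Y)$.

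The main obstacle, I expect, is the bookkeeping in the converse: one must carefully track the four cases for each $E_i$ (centre small or divisorial on $X$, crossed with small or divisorial on $Y$), and in the mixed case — say $E_i$ is a divisor on $X$ appearing in $D_X$ but has small centre on $Y$ — one must see that this situation is excluded precisely by the hypothesis $f^{-1}_*D_Y=D_X$ together with terminality of $Y$ and canonicity of $(Y,D_Y)$. Once the general equivalence is in place, the final assertion is immediate: when $D$ is irreducible the conditions $f_*D=D$ and $f^{-1}_*D=D$ each say exactly that $f$ maps $D$ birationally onto $D$, so $\Bir(X,D)$ is the stabilizer of $D$ in $\Bir(X)$.
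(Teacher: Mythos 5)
Your forward direction is essentially the paper's argument and is fine, up to one imprecision: the reason a valuation with negative log discrepancy for $(Y,D_Y)$ cannot be exceptional over $Y$ is the canonicity of the \emph{pair} $(Y,D_Y)$ (which forces $a(E,K_Y+D_Y)\geq 0$ for small centre), not terminality of $Y$ alone; you state the right principle in your opening, so this is cosmetic. Likewise the parenthetical remark and the final ``in particular'' statement go through as you say.

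The converse, however, has a genuine gap. You propose to verify $p^*(K_X+D_X)=q^*(K_Y+D_Y)$ divisor by divisor, but for a divisor $E_i$ on $W$ that is exceptional over both $X$ and $Y$ (or is a divisor on $X$, not a component of $D_X$, contracted by $f$), canonicity only gives $a(E_i,K_X+D_X)\geq 0$ and $a(E_i,K_Y+D_Y)\geq 0$, and there is no \emph{local} reason these two numbers coincide. Your stated justification --- that outside the boundary both log discrepancies equal the plain discrepancies $a(E_i,K_X)$, $a(E_i,K_Y)$, forcing equality --- fails twice: $a(E_i,K_X+D_X)=a(E_i,K_X)$ is false whenever the centre of $E_i$ lies in $D_X$ (for the blow-up of the node $z\in D$ in Theorem~B one has $a(E,K_{\PP^3})=2$ but $a(E,K_{\PP^3}+D)=0$), and even when both centres avoid the boundaries, $a(E_i,K_X)=a(E_i,K_Y)$ is not a given but essentially the statement being proved. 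What makes the converse true is the global Calabi--Yau hypothesis $K+D\sim 0$, which your argument never invokes. The paper's route is: since the strict transforms of $D_X$ and $D_Y$ agree on $W$, write $K_W+D_W=p^*(K_X+D_X)+\sum_i a_iE_i\sim\sum_i a_iE_i$ and $K_W+D_W=q^*(K_Y+D_Y)+\sum_j b_jF_j\sim\sum_j b_jF_j$ with $a_i,b_j\geq 0$ by canonicity; pushing the first relation forward by $q$ gives an effective divisor linearly equivalent to $0$, so every $E_i$ with $a_i>0$ is $q$-exceptional, and symmetrically; then $\sum a_iE_i\sim\sum b_iE_i$ are exceptional on both sides and the negativity lemma \cite[Lemma 3.39]{KM98} gives $a_i=b_i$. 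Some such global step is unavoidable: if matching boundaries alone implied crepancy, the proposition would hold for arbitrary (t,~c) pairs, yet a divisorial contraction with centre off the boundary matches boundaries while changing discrepancies --- it is excluded here only because it destroys $K+D\sim 0$.
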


\begin{Remark}
  \begin{enumerate}[(1)]
  \item In many cases of interest, the assumption that $(X,D)$ is
    (t,~c) implies that $D$ is irreducible.  This is the case for
    instance when $X$ is Fano with $\dim(X)>1$.  Indeed, if $(X,D)$ is
    (t,~c), then $D$ is normal, and thus irreducibility of $D$ is
    equivalent to connectedness of $D$. A key case when
    connectedness fails is $X=\PP^1$, $D=\{0,\infty\}$, and one feels
    that all examples must in some way be related to this (see the
    main result of~\cite{MR3905556} for a statement along these lines).
  \item The characterisation of the volume preserving condition stated
    in Proposition~\ref{group3} does not hold in general for
    birational maps between (t,~lc) CY pairs. For example, consider
    the divisor $D=L_0+ L_1+ L_2$ on $\mathbb{P}^2$, where the $L_i$
    are the three coordinate lines.  It is (t,~lc) but not (t,~c).
    One checks easily that the standard Cremona transformation
$$
\begin{array}{cccc}
f\colon & (\mathbb{P}^2,D) & \dasharrow & (\mathbb{P}^2,D)\\
 & (x_0:x_1:x_2) & \longmapsto & (x_1x_2:x_0x_2:x_0x_1)
\end{array}
$$ 
is volume preserving, but it does not restrict to a birational
self-map of $D$.  Indeed, $f=f^{-1}$ contracts the lines $L_i$ to the
three coordinate points of $\P^2$.
  \end{enumerate}
\end{Remark}

\begin{proof}[Proof of Proposition~\ref{group3}]
If $(X,D_X)$ is a (t,~c)  pair, then any divisor $E$ over $X$ such that $a(E, K_X+D_X)=-1$ must be a component of $D_X$. 
Therefore, if $f\colon(X,D_X)\dasharrow (Y,D_Y)$ is a volume preserving birational map between (t,~c)  CY pairs, then $f$ does not contract any component of $D_X$, and $f^{-1}$ does not contract any component of $D_Y$. Hence, $f_{*}D_X = D_Y$ and $f^{-1}_{*}D_Y = D_X$.

To prove the converse, 
consider a common log resolution
 \[
  \begin{tikzpicture}[xscale=1.5,yscale=-1.2]
    \node (A0_1) at (1, 0) {$W$};
    \node (A1_0) at (0, 1) {$X$};
    \node (A1_2) at (2, 1) {$Y$\, .};
    \path (A1_0) edge [->,dashed]node [auto] {$\scriptstyle{f}$} (A1_2);
    \path (A0_1) edge [->]node [auto] {$\scriptstyle{q}$} (A1_2);
    \path (A0_1) edge [->]node [auto,swap] {$\scriptstyle{p}$} (A1_0);
  \end{tikzpicture}
  \]
Assume that $f_{*}D_X = D_Y$ and $f^{-1}_{*}D_Y = D_X$. Then the strict transforms of $D_X$ via $p$ and of $D_Y$ via $q$ coincide. We denote them by $D_W$. 

Since $(X,D_X)$ and $(Y,D_Y)$ are canonical, we have
\begin{itemize}
\item[(i)] $K_W + D_W = p^{*}(K_X+D_X)+ \sum_{i}a_iE_i \sim \sum_{i}a_iE_i$, where the $E_i$ are $p$-exceptional and $a_i\geq 0$;
\item[(ii)] $K_W + D_W = q^{*}(K_Y+D_Y)+ \sum_{j}b_jF_j \sim \sum_{j}b_jF_j$, where the $F_j$ are $q$-exceptional and $b_j\geq 0$. 
\end{itemize}

Applying $q_{*}$ to (i) yields
$$
q_{*}(K_W + D_W) = \sum_{i}a_iq_{*}E_i \sim 0.
$$
This shows that, whenever $a_i> 0$, the divisor $E_i$ is $q$-exceptional.
Similarly,  whenever $b_j> 0$, the divisor $F_j$ is $p$-exceptional.
So, up to relabelling, we may assume that $E_i=F_i$, and we have 
$$
\sum_{i}a_iE_i \sim \sum_{i}b_iE_i.
$$
The negativity lemma \cite[Lemma 3.39]{KM98} then implies that $a_i = b_i$, and so $p^{*}(K_X+D_X) = q^{*}(K_Y+D_Y)$.
\end{proof}

The next lemma ensures that canonicity is preserved when we run a volume preserving Sarkisov program for canonical Mf CY pairs.

\begin{Lemma}\label{D_normal}
Let $f\colon (X,D_X)\map (Y,D_Y)$ be a volume preserving birational
map between $(t, ~lc)$ CY pairs. 
Then $(X,D_X)$ is canonical if and only if so is
$(Y,D_Y)$.
\end{Lemma}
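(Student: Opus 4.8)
The plan is to reduce the statement to the invariance of discrepancies under volume preserving maps, which is Proposition~\ref{pro:equivalentconditions}(2). First I would recall that a $(t,\mathrm{lc})$ CY pair $(X,D_X)$ is canonical precisely when there is no divisorial valuation $E$ with small centre on $X$ and $a(E,K_X+D_X)<0$; equivalently, since the pair is log canonical, when there is no such $E$ with $-1\le a(E,K_X+D_X)<0$. The only way canonicity can fail for a $(t,\mathrm{lc})$ pair is therefore the existence of a ``bad'' valuation $E$ whose discrepancy lies in the half-open interval $[-1,0)$ and whose centre on $X$ has codimension $\ge 2$. The whole point is that such valuations are detected by the discrepancy function alone, and $f$ preserves that function.

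So suppose $(Y,D_Y)$ is not canonical; I would produce a divisorial valuation $E$ with $\centre_E Y$ small on $Y$ and $a(E,K_Y+D_Y)<0$. Since $f$ is birational, $E$ is also a geometric valuation of $X$; I claim it has centre on both $X$ and $Y$ (its centre on $X$ is automatically nonempty because $X$ is proper, and being a valuation of the common function field it has a well-defined centre on each), so Proposition~\ref{pro:equivalentconditions}(2) applies and gives $a(E,K_X+D_X)=a(E,K_Y+D_Y)<0$. It remains to check that $\centre_E X$ is also small, i.e.\ not a divisor on $X$: if $\centre_E X$ were a prime divisor $D'\subset X$, then $a(E,K_X+D_X)=-\mathrm{ord}_{D'}(D_X)\le 0$ is an integer, hence equals $0$ or $-1$; the value $0$ is excluded since we just showed the discrepancy is negative, so $a=-1$, forcing $D'$ to be a component of $D_X$, hence $f_*^{-1}D_Y=D_X$ would force $E=\mathrm{ord}_{D'}$ to have centre a divisor (a component of $D_Y$) on $Y$ as well, contradicting that $\centre_E Y$ is small. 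Therefore $\centre_E X$ is small and $a(E,K_X+D_X)<0$, so $(X,D_X)$ is not canonical. By symmetry (running the argument with $f^{-1}$ in place of $f$) the converse holds, which proves the equivalence.

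The only genuine subtlety — and the step I would be most careful about — is the bookkeeping around divisorial valuations whose centre is a divisor: one must rule out the degenerate possibility that the ``witness'' valuation $E$ for non-canonicity of $(Y,D_Y)$ has divisorial centre on $Y$ (it does not, by definition of ``small centre''), and then argue that its centre on $X$ cannot jump up to a divisor either. This is handled cleanly by the integrality of discrepancies along prime divisors together with Proposition~\ref{group3}: a volume preserving map between $(t,\mathrm{lc})$ pairs that are moreover canonical on one side cannot create or destroy components of the boundary in a way that changes which valuations are divisorial-with-divisorial-centre. Everything else is a direct invocation of Proposition~\ref{pro:equivalentconditions}; no resolution-by-resolution computation or negativity lemma argument beyond what is already packaged there is needed.
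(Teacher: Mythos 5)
Your reduction is correct as far as it goes: by Proposition~\ref{pro:equivalentconditions}(2) a witness valuation $E$ for non-canonicity of $(Y,D_Y)$ has the same negative discrepancy for $(X,D_X)$, and if its centre on $X$ is a divisor $D'$ then integrality forces $a(E,K_X+D_X)=-1$ and $D'$ to be a component of $D_X$. But the last step --- ruling out that such a component $D'$ is contracted by $f$ --- is exactly where your argument has a genuine gap. You justify it by invoking $f^{-1}_{*}D_Y=D_X$ from Proposition~\ref{group3}, but that proposition assumes \emph{both} pairs are (t,\,c); its proof uses canonicity of $(Y,D_Y)$ precisely to see that $f$ contracts no component of $D_X$. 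In your proof by contradiction $(Y,D_Y)$ is assumed non-canonical, so the citation is circular: the claim that a component of $D_X$ cannot have small centre on $Y$ is not a formal consequence of discrepancy bookkeeping --- it is the whole content of the lemma. A symptom of the gap is that your argument never uses terminality of $Y$: a valuation $\ord_{D_i}$ contracted to a small centre on $Y$ has discrepancy $-1$ for $(Y,D_Y)$, which is perfectly compatible with $(Y,D_Y)$ being (t,\,lc), so nothing in the purely valuative setup forbids it.

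The paper closes exactly this gap with a non-formal argument: on a common log resolution $p\colon W\to X$, $q\colon W\to Y$, one writes $K_W+D_W=q^{*}(K_Y+D_Y)+\sum_j a_jE_j$ with $a_j\geq 0$ and $E_j$ $q$-exceptional whenever $a_j>0$, and applies the connectedness theorem \cite[Theorem~5.48]{KM98} to conclude that the support of $D_W$ is connected in a neighbourhood of every fibre of $q$. Since the strict transforms $\widetilde D_i$ of the pairwise disjoint components of $D_X$ are disjoint, a $q$-exceptional $\widetilde D_i$ cannot be contained in $\mathrm{Supp}\,q^{*}D_Y$, whence $a(\widetilde D_i,K_Y)=a(\widetilde D_i,K_Y+D_Y)=-1$, contradicting terminality of $Y$. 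Some argument of this kind, combining connectedness with the ``t'' hypothesis on $Y$, is needed to turn your sketch into a proof.
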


\begin{proof}
First of all, note that if $(X,D_X)$ is canonical, then $D_X$ is normal. Write
\[
D_X=\sum D_i,
\] 
where the $D_i$ are the connected components of $D_X$. The $D_i$ are the only
divisorial valuations with $a(D_i,K_X+D_X)<0$. 
Since the map is volume
preserving, for every divisorial valuation $E$, $a(E,K_Y+D_Y)=a(E,
K_X+D_X)$. To show that the pair $(Y,D_Y)$ is canonical, all we need to show is that no $D_i$ is 
exceptional over $Y$. Consider a common log resolution
 \[
  \begin{tikzpicture}[xscale=1.5,yscale=-1.2]
    \node (A0_1) at (1, 0) {$W$};
    \node (A1_0) at (0, 1) {$X$};
    \node (A1_2) at (2, 1) {$Y\, ,$};
    \path (A1_0) edge [->,dashed]node [auto] {$\scriptstyle{f}$} (A1_2);
    \path (A0_1) edge [->]node [auto] {$\scriptstyle{q}$} (A1_2);
    \path (A0_1) edge [->]node [auto,swap] {$\scriptstyle{p}$} (A1_0);
  \end{tikzpicture}
  \]
denote by $\widetilde D_i$ the strict transform of $D_i$, and by
$D_W= \sum \widetilde D_i$ the strict transform of $D_X$.
In order to show that $(Y,D_Y)$ is canonical, we need to show that no $\widetilde D_i$ is $q$-exceptional.

Since $(X,D_X)$ is (t,~c), we have
\[
K_W+D_W=p^* (K_X+D_X)+\sum a_j E_j,
\]
with $a_j\geq 0$. If $a_j>0$, then $E_j$ is $p$-exceptional. 
The volume preserving condition says 
that $p^* (K_X+D_X)=q^* (K_Y+D_Y)$, and hence
\[
K_W+D_W = q^* (K_Y+D_Y) +\sum a_j E_j.
\]
Since $q_{*}D_W = D_Y$, if $a_j>0$ then $E_j$ is
$q$-exceptional. 
By~\cite[Theorem~5.48]{KM98}, the support of $D_W$ is connected in the
neighbourhood of every fibre of $q$. 
Since the $\widetilde D_i$ are pairwise disjoint, connectedness implies that, if 
$\widetilde D_i$ is $q$-exceptional, then it is not contained in the support of 
$q^* D_Y$. Hence, if 
$\widetilde D_i$ is $q$-exceptional, then $-1=a(\widetilde D_i, K_Y+D_Y)=a(\widetilde D_i, K_Y)$,
contradicting the assumption that $Y$ has terminal singularities.
We conclude that no $\widetilde D_i$ is $q$-exceptional, and thus $(Y,D_Y)$ is canonical. 
\end{proof}

\medskip

In the remaining part of this section, we revisit the Mf CY pairs in
Table~\ref{table:1}, and construct explicit volume preserving Sarkisov
links connecting them.  These links will appear crucially in the proof
of Theorem~C.  They are summarised in the following diagram:
$$
\xymatrix{\text{obj.~$1$}\ar@{-->}[dr]_{\epsilon_a, \epsilon_b} &
  \text{obj.~$2$} \ar@{->}[l]_\sigma & 
\text{obj.~$2^a$, $2^b$} \ar@{-->}[l]_{\nu^a, \nu^b}
\ar@{->}[dl]^{\chi^a, \chi^b}\\
 & \text{obj.~$3^a$, $3^b$} \ar@{-->}[dl]_{\phi^a, \phi^b}
 \ar@{-->}[dr]^{\psi^a,\widetilde{\psi}^b}& \\
 \text{obj.~$4$}& &\text{obj.~$5^a$} }
$$
In Example~\ref{rem:objects5} we produce an isomorphism between the
family~$5^a$ and the family~$5^b$.

\paragraph{\textbf{Object~1}} The pair $(\PP^3,D_4)$, where 
\[
D_4 = \Bigl(x_0x_1x_3^2+Bx_3+C
=0\Bigr).
\]
Here and in what follows, $B=B_3(x_0,x_1,x_2)$ and $C=C_4(x_0,x_1,x_2)$ are fixed homogeneous forms of the indicated degree. 
Recall that $D_4$ is assumed to be smooth apart from a single $A_2$-singularity at $[0:0:0:1]$.
\paragraph{\textbf{Object~2}} The pair $\left(\FF^3_1, D_{\binom{2}{2}}\right)$, where $\FF^3_1$ is 
the blow-up of $\PP^3$ at a point or, equivalently,
the 
$\PP^1$-bundle $\PP\big(\cO\oplus \cO(1)\big)$ over $\PP^2$
with homogeneous coordinates and weights
\[
\begin{array}{ccccc}
x_0 & x_1 & x_2 & x_3 & x \\
\hline
1 & 1 & 1 & 0 & -1 \\
0 & 0 & 0 & 1 &  1 
\end{array} \, ,
\]

\[
\text{ and } \ \ \ D_{\binom{2}{2}}=
\Bigl(
x_0x_1x_3^2+B x_3x+C x^2
=0\Bigr).
\]

\begin{exa}[Map from object~$2$ to object~$1$]
  \label{link:1->2}
The blow-up $\sigma \colon \left(\FF_1^3,D_{\binom{2}{2}}\right) \to (\PP^3, D_4)$ 
of the point $[0:0:0:1]\in \PP^3$ is a volume preserving Sarkisov link of type~(I). 
In coordinates,  $\sigma \colon {\FF_1^3}_{(x_0,x_1,x_2,x_3,x)} \to {\PP^3}_{(x_0,x_1,x_2,x_3)}$ is given by 
$$
(x_0,x_1,x_2,x_3,x) \mapsto \left(x_0,x_1,x_2,\frac{x_3}{x}\right).
$$
\end{exa}
\paragraph{\textbf{Object~$2^a$}} The pair $\left(\FF^3_2, D_{\binom{1}{2}}^a\right)$, where
$\FF^3_2$ is the $\PP^1$-bundle $\PP\big(\cO\oplus \cO(2)\big)$ over $\PP^2$
with homogeneous coordinates and weights
\[
\begin{array}{ccccc}
x_0 & x_1 & x_2 & x_3 & x \\
\hline
1 & 1 & 1 & 0 & -2 \\
0 & 0 & 0 & 1 &  1 
\end{array} \, ,
\]

\[
\text{ and } \ \ \ D_{\binom{1}{2}}^a= 
\Bigl(x_0x_3^2+Bx_3x+x_1Cx^2
=0\Bigr).
\]

\begin{exa}[Map from object~$2^a$ to object $2$]
  \label{link:2flat->2}
Consider the rational map 
\[
\nu^a \colon \left(\FF_2^3,D_{\binom{1}{2}}^a\right)\dasharrow  \left(\FF_1^3,D_{\binom{2}{2}}\right) 
\]
obtained by blowing-up the curve 
$(x_1=x_3=0)\subset D_{\binom{1}{2}}^a$, and then blowing-down the strict transform of the 
divisor $(x_1=0)$ onto the curve $(x_1 = x = 0)\subset D_{\binom{2}{2}}$. 
In coordinates, $\nu^a \colon {\FF_2^3}_{(x_0,x_1,x_2,x_3,x)}\dasharrow {\FF_1^3}_{(x_0,x_1,x_2,x_3,x)}$
is given by 
$$
(x_0,x_1,x_2,x_3,x) \mapsto (x_0,x_1,x_2,x_3, xx_1).
$$
It is a volume preserving Sarkisov link of type~(II).

 Similarly, there is a volume preserving Sarkisov link of type~(II) $\nu^b
  \colon \text{obj.~$2^b$}\dasharrow \text{obj.~$2$}$.

Note that the Mf CY pairs ~$2^a$, $2^b$ and $2$ are all square equivalent. 
\end{exa}

\paragraph{\textbf{Object~$3^a$}} The pair $(\PP(1^3,2), D_5^a)$, where
\[
D_5^a=\Bigl( x_0y^2+B y+x_1C =0\Bigr).
\]
Note that $D_5^a\subset \PP(1^3,2)$ is not a
general degree five surface in $\PP(1^3,2)$. Indeed $D_5^a$ always
contains the curve $y=x_1=0$.

\paragraph{\textbf{Object~$3^b$}} The pair $(\PP(1^3,2), D_5^b)$, where
\[
D_5^b=\Bigl(x_1y^2+By+x_0C
=0\Bigr).
\]

  Object~$3^b$ is object~$3^a$ with $x_0,x_1$ swapped in the same way as
  object~$2^b$ is object~$2^a$ with $x_0,x_1$
  swapped. However, in general, object~$3^b$ is not square equivalent to object~$3^a$.
  Indeed, since $X^\dagger=\PP(1,1,1,2)$ is a Mori fibre space
  over $\Spec \CC$, the two objects are square equivalent if and only
  if they are isomorphic.

\begin{exa}[Map from object~$1$ to object~$3^b$]
  \label{link:1->3a}
 Consider the birational map $\epsilon_b \colon (\PP^3, D_4)\dasharrow (\PP(1^3, 2),D_5^b)$
obtained by a weighted blow-up of $[0:0:0:1]\in \PP^3$ with weights $(2,1,1)$, followed by the contraction of the strict transform of the divisor $(x_0=0)$ onto the curve $(y = x_0 = 0)\subset D_5^b$. 
In coordinates, $\epsilon_b \colon \PP^3_{(x_0,x_1,x_2,x_3)}\dasharrow  \PP(1^3,2)_{(x_0,x_1,x_2,y)}$
is given by 
$$(x_0,x_1,x_2,x_3)\mapsto (x_0,x_1,x_2,x_3x_0).$$
It is a volume preserving Sarkisov link of type~(II). Similarly, there
is a volume preserving Sarkisov link of type~(II),
$\epsilon_a\colon \text{obj.~$1$}\dasharrow \text{obj.~$3^a$}$.
\end{exa}

\begin{exa}[Maps from object~$2^b$ to object~$3^b$ and from object~$2^a$ to
  object~$3^a$]
  \label{link:3a->2sharp}
The blow-up $\chi^b\colon \left(\FF^3_2, D_{\binom{1}{2}}^b\right)\to (\PP(1^3, 2), D_5^b)$ of the singular point $[0:0:0:1]\in \PP(1^3, 2)$  is a volume preserving Sarkisov link of type~(I). 
In coordinates, $\chi^b\colon {\FF_2^3}_{(x_0,x_1,x_2,x_3,x)}\to \PP(1^3, 2)_{(x_0,x_1,x_2,y)}$ is given by 
$$(x_0,x_1,x_2,x_3,x)\mapsto \left(x_0,x_1,x_2,\frac{x_3}{x}\right).$$
From the description in coordinates, it is straightforward to check
that $\chi^b=\epsilon_b \circ \sigma \circ \nu^b$.
Similarly, there is a volume preserving Sarkisov link of type~(I),
$\chi^a \colon \text{obj.~$2^a$}\to \text{obj.~$3^a$}$, and
$\chi^a=\epsilon_a \circ \sigma \circ \nu^a$.
\end{exa}

\paragraph{\textbf{$3$-parameter family of objects~4}} 
Let $D_{3,4}\subset \PP(1^3,2^2)_{(x_0,x_1,x_2,y_0,y_1)}$ be the complete
intersection given by equations:
\[
D_{3,4}=
\begin{cases}
  y_0y_1+C&=0,\\
  x_0y_1-x_1y_0-B&=0.
\end{cases}
\]
For each fixed linear form $L=L(x_0,x_1,x_2)$, we consider the pair
$(X_4,D_{3,4})$, where $X_4$ is the following quartic containing $D_{3,4}$:
\[
X_4=\Bigl(y_0y_1+C-L(x_0y_1-x_1y_0-B)
=0\Bigr).
\]

\begin{exa}[Map from object~$3^b$ to object~$4$]
  \label{link:3b->4}
For each fixed linear form $L=L(x_0,x_1,x_2)$, we construct a volume preserving
Sarkisov link of type~(II),  $\phi^b\colon (\PP(1^3,2),D_5^b)\dasharrow (X_4, D_{3,4})$.
In coordinates, the map $\phi^b \colon \PP(1^3,2)_{(x_0,x_1,x_2,y)}\dasharrow
\PP(1^3,2^2)_{(x_0, x_1, x_2, y_0, y_1)}$ is given by
\begin{equation}\label{phi^b}
  (x_0, x_1, x_2, y) \mapsto \left(x_0, x_1, x_2, y , -x_1L -\frac{x_0x_1L^2+BL+C}{y-x_0L}\right).
\end{equation}
This map is obtained by first blowing-up the curve $\Gamma \subset \PP(1,1,1,2)$ defined by:
\[
\Gamma = 
\left\lbrace\begin{array}{l}
Q = y-x_0L = 0,\\ 
F = x_0x_1L^2+BL+C=0,
\end{array}\right. 
\]
and then blowing-down the strict transform of the divisor
$(y-x_0L=0)$.

In what follows, we describe this map in detail, and deduce expression \eqref{phi^b} above. 

First note that $\Gamma\subset D_5^b$:
\begin{equation}
  \label{eq:1}
  x_1y^2+By+x_0C=Q\left(x_1(y+x_0L)+B\right)+Fx_0.
\end{equation}
In order to describe the blow-up of $\PP(1,1,1,2)$ along $\Gamma$, 
consider the toric variety $\FF$ with coordinates and weight matrix:
$$
\begin{array}{cccccc}
z_0 & z_1 & z_2 & z_3 & u & v\\ 
\hline
1 & 1 & 1 & 2 & 0 & -2 \\ 
0 & 0 & 0 & 0 & 1 & 1
\end{array}
$$ 
and stability condition chosen so that the nef cone of $\FF$ is the
span $\langle z_i,u \rangle_+$:
$$
\begin{tikzpicture}[line cap=round,line join=round,>=triangle 45,x=1.0cm,y=1.0cm]
\clip(-2.2,-0.1) rectangle (2.2,1.2);
\draw [->,line width=0.4pt] (0.,0.) -- (1.,0.);
\draw [->,line width=0.4pt] (0.,0.) -- (2.,0.);
\draw [->,line width=0.4pt] (0.,0.) -- (0.,1.);
\draw [->,line width=0.4pt] (0.,0.) -- (-2.,1.);
\begin{scriptsize}
\draw [fill=black] (1.,0.) circle (0.5pt);
\draw[color=black] (0.9,0.16) node {$z_0,z_1,z_2$};
\draw [fill=black] (2.,0.) circle (0.5pt);
\draw[color=black] (2.082474878015239,0.12781237882652488) node {$z_3$};
\draw [fill=black] (0.,1.) circle (0.5pt);
\draw[color=black] (0.08478057481659083,1.1207491922507038) node {$u$};
\draw [fill=black] (-2.,1.) circle (0.5pt);
\draw[color=black] (-1.9129137283820574,1.1207491922507038) node {$v$};
\end{scriptsize}
\end{tikzpicture}
$$
This choice of stability condition gives the irrelevant ideal
$(z_0,z_1,z_2,z_3)(u,v)$ and yields a $\PP^1$-bundle
morphism $\pi \colon \FF \to \PP(1,1,1,2)$.
The other contraction of $\FF$ is the divisorial
contraction $\pi^\prime \colon \FF\to \PP(1,1,1,2,2)$ that maps the divisor $v=0$ to the point $[0:0:0:0:1]\in
\PP(1,1,1,2,2)$. In coordinates,  $\pi^\prime \colon \FF_{(z_0,z_1,z_2,z_3,u,v)}\to \PP(1,1,1,2,2)_{(x_0,x_1,x_2,y_0,y_1)}$
is given by 
\[
(z_0,z_1,z_2,z_3,u,v) \mapsto \left(z_0,z_1,z_2,z_3,\frac{u}{v}\right).
\]
The blow-up $Z$ of $\mathbb{P}(1,1,1,2)$ along $\Gamma$ is cut out in $\FF$ by the equation
$$uQ+vF = 0.$$
Let us describe the equation of $\pi^\prime (Z)\subset \mathbb{P}(1,1,1,2,2)$. From $\frac{u}{v} = -\frac{F}{Q}$, we get
$y_1Q + F=0$ and hence $\pi^\prime (Z)$ is cut out by the equation
$$
y_1(y_0-x_0L) +x_0x_1L^2+BL+C=0.
$$
Combining with Equation~\eqref{eq:1}, we see that the strict
transform of $D_5^b$ in $\pi^\prime (Z)$ is cut out by the equation
\[
x_0y_1=x_1y_0+x_0x_1L+B.
\]
In coordinates, the composition $\pi^\prime\circ \pi \colon \PP(1^3,2)_{(x_0,x_1,x_2,y)}\to \pi^\prime (Z)\subset \PP(1,1,1,2,2)_{(x_0,x_1,x_2,y_0,y_1)}$
is given by 
\[
  (x_0, x_1, x_2, y) \mapsto \left(x_0, x_1, x_2, y , -\frac{F}{Q}\right).
\]
Next we compose it with the automorphism of $\PP(1,1,1,2,2)_{(x_0,x_1,x_2,y_0,y_1)}$ given in coordinates by 
\[
(x_0,x_1,x_2,y_0,y_1) \mapsto (x_0,x_1,x_2,y_0,y_1-x_1L).
\]
It is immediate to check that, in coordinates, the composed map $\phi^b \colon \PP(1^3,2)_{(x_0,x_1,x_2,y)}\dasharrow
\PP(1^3,2^2)_{(x_0, x_1, x_2, y_0, y_1)}$ is given by \eqref{phi^b}.
The image of $Z$ is given by the equation
\[
0= (y_1+x_1L)(y_0-x_0L)+x_0x_1L^2+BL+C=y_0y_1+C-L(x_0y_1-x_1y_0-B),
\]
which is precisely the equation of $X_4$. 
The strict transform of $D_5^b$ is the surface of $\PP(1^3,2^2)$ given by
the equations
\[
\begin{cases}
  y_0y_1+C-L(x_0y_1-x_1y_0-B) &=0,\\
  x_0(y_1+x_1L)-x_1(y_0+x_0L)-B&=x_0y_1-x_1y_0-B=0,
\end{cases}
\]
which are precisely the equations of $D_{3,4}$.

Similarly, one can construct a volume preserving Sarkisov link of type~(II),
$\psi^a\colon \text{obj.~$3^a$}\dasharrow \text{obj.~$4$}$.
\end{exa}

\paragraph{\textbf{7-parameter family of objects~$5^a$}} 
For each fixed quadratic form $Q=Q(x_0,x_1,x_2)$, we consider the pair
$(X_4,D_{2,4})$, where $X_4\subset \PP(1^4,2)_{(x_0,x_1,x_2,x_3,y)}$ is the hypersurface given by
equation
\[
y(cy+Q)-C+x_3\bigl((x_0+cx_1)y + x_1 Q+B \bigr)=0,
\]
and $D_{2,4}$ is cut out in $X_4$ by the equation $(y+x_1x_3=0)$.

\begin{rem}
  \label{rem:1}
  The variety $X_4\subset \PP(1^4,2)$ has a $cA_2$-singularity\footnote{A singularity
    that is locally analytically equivalent to $uv+w^3+z^3=0$ in
    $\CC^4$.}  at the point $[0:0:0:1:0]$ and $D_{2,4}$ is
  isomorphic to the original $D\subset \PP^3$, as one can check by
  plugging in $y=-x_1x_3$ and literally getting the equation of
  $D\subset \PP^3$ back.
\end{rem}

\begin{exa}[Isomorphism of objects~$5^a$ and~$5^b$]
  \label{rem:objects5}
  Table~1 also lists a 7-parameter family of objects~$5^b$. The substitution
\[
\widetilde{y}=-y-x_3(x_0+x_1)
\]
transforms object~$5^a$ isomorphically to object~$5^b$.

Indeed, rewriting the equation slightly and substituting, we get:
  \begin{multline*}
    y\Bigl(cy+Q\Bigr)-C+x_3\Bigl((x_0+cx_1)y+x_1Q+B \Bigr) =\\
    = y\Bigl(cy+x_3(x_0+cx_1)\Bigr) + Q \Bigl(y+x_1x_3\Bigr) -C+x_3B=\\=
     \Bigl(c\widetilde{y}+x_3(x_0+cx_1)\Bigr) \widetilde{y}
     -Q\Bigl(\widetilde{y}+x_0x_3\Bigr)-C+x_3B =\\=
      \widetilde{y}\Bigl(c\widetilde{y}-Q\Bigr)-C+x_3\Bigl((x_0+cx_1)\widetilde{y}-x_0Q+B \Bigr).     
  \end{multline*}
\end{exa}

\begin{exa}[Maps from objects~$3^\bullet$ to objects~$5^\bullet$]
  \label{link:3a->5}
  For each quadratic form $Q=Q(x_0,x_1,x_2)$, we construct a volume preserving Sarkisov link of type~(II),
 $\psi^a\colon (\PP(1^3,2), D_5^a)
  \dasharrow (X_4, D_{2,4})$.
 In coordinates, the map  
$\psi^a\colon \PP(1^3,2) _{(x_0,x_1,x_2,y)}\to\PP(1^4,2)_{(x_0,x_1,x_2,x_3,y)}$ is given by:
\begin{equation} \label{psi^b}
  (x_0,x_1,x_2,y) \mapsto \left(x_0, x_1, x_2,
  -\frac{y(cy+Q)-C}{x_0y+B+x_1(cy+Q)}, y\right).
\end{equation}
This map is obtained by first blowing-up the curve $\Gamma \subset  \PP(1^3,2)$ defined
by equations:
\[
\left\lbrace\begin{array}{l}
F_3 = x_0y+B+x_1(cy+Q)  = 0,\\ 
G_4 = y(cy+Q) -C= 0,
\end{array}\right. 
\] 
and then blowing-down the strict transform of the divisor $\big(x_0y+B+x_1(cy+Q)=0\big)$.

Let us describe this map in detail.
First note that $\Gamma\subset D_5^a$:
\begin{multline}
\label{eq:1bis}
x_0y^2+By+x_1C\ =\ x_0y^2+By+x_1y(cy+Q)-x_1y(cy+Q)+x_1C\ =\\=\ y(x_0y+B+x_1(cy+Q))-x_1(y(cy+Q) - C)\ =\ yF_3-x_1G_4.  
\end{multline}
In order to describe the blow-up of $\PP(1,1,1,2)$ along $\Gamma$, 
consider the toric variety $\FF$ with coordinates and weight matrix:
$$
\begin{array}{cccccc}
x_0 & x_1 & x_2 & y & u & v\\ 
\hline
1 & 1 & 1 & 2 & 0 & -1\\ 
0 & 0 & 0 & 0 & 1 & 1 
\end{array} \, ,
$$ 
and stability condition chosen so that the nef cone of $\FF$ is the
span $\langle x_i, u\rangle_+$: 
$$
\begin{tikzpicture}[line cap=round,line join=round,>=triangle 45,x=1.0cm,y=1.0cm]
\clip(-2.2,-0.1) rectangle (2.2,1.2);
\draw [->,line width=0.4pt] (0.,0.) -- (1.,0.);
\draw [->,line width=0.4pt] (0.,0.) -- (2.,0.);
\draw [->,line width=0.4pt] (0.,0.) -- (0.,1.);
\draw [->,line width=0.4pt] (0.,0.) -- (-1.,1.);
\begin{scriptsize}
\draw [fill=black] (1.,0.) circle (0.5pt);
\draw[color=black] (0.9,0.16) node {$x_0,x_1,x_2$};
\draw [fill=black] (2.,0.) circle (0.5pt);
\draw[color=black] (2.082474878015239,0.12781237882652488) node {$y$};
\draw [fill=black] (0.,1.) circle (0.5pt);
\draw[color=black] (0.08478057481659083,1.1207491922507038) node {$u$};
\draw [fill=black] (-1.,1.) circle (0.5pt);
\draw[color=black] (-0.9129137283820574,1.1207491922507038) node {$v$};
\end{scriptsize}
\end{tikzpicture}
$$
This choice of stability condition gives the irrelevant ideal
$(x_0,x_1,x_2,y)(u,v)$, and yields a $\PP^1$-bundle morphism
$\pi \colon \FF \to \PP(1,1,1,2)$.  The other contraction of $\FF$ is
the divisorial contraction $\pi^\prime \colon \FF\to \PP(1,1,1,1,2)$
that maps the divisor $(v=0)$ to the point
$[0:0:0:1:0]\in \PP(1,1,1,1,2)$. In coordinates,
$\pi^\prime \colon \FF_{(x_0,x_1,x_2,y,u,v)}\to
\PP(1,1,1,1,2)_{(x_0,x_1,x_2,x_3,y)}$ is given by
\begin{equation}
  \label{eq:tricky_blowup}
(x_0,x_1,x_2,y,u,v) \mapsto \left(vx_0,vx_1,vx_2, u, v^2y\right)\,.  
\end{equation}
The blow-up $Z$ of $\mathbb{P}(1,1,1,2)$ along $\Gamma$ is cut out in $\FF$ by the equation
$$
uF_3+vG_4 = 0. 
$$
Let us describe the equation of $\pi^\prime (Z)\subset \mathbb{P}(1,1,1,1,2)$. 
From $\frac{u}{v} = -\frac{G_4}{F_3}$, we get
$x_3F_3 + G_4=0$, and hence $\pi^\prime (Z)$ is cut out by the equation
\[
y(cy+Q)-C+x_3(x_0y+B+x_1(cy+Q))=0,
\]
which is precisely the equation of $X_4$.
Combining with Equation~\eqref{eq:1bis}, we see that the strict
transform of $D_5^a$ in $X_4$ is cut out by the equation
\[
y+x_1x_3=0,
\]
and so it is precisely the divisor $D_{2,4}$.

In coordinates, the composed map  
$\psi^a\colon \PP(1^3,2) _{(x_0,x_1,x_2,y)}\dasharrow\PP(1^4,2)_{(x_0,x_1,x_2,x_3,y)}$ is given by:
$$
  (x_0,x_1,x_2,y) \mapsto \left(x_0, x_1, x_2,
  -\frac{G_4}{F_3}, y\right),
$$
which is precisely \eqref{psi^b} above. 

Similarly, there is a volume preserving Sarkisov link of type~(II),
$\psi^b\colon \text{obj.~$3^b$}\dasharrow \text{obj.~$5^b$}$.

Finally, we denote by $\widetilde{\psi}^a\colon
\text{obj.~$3^a$}\dasharrow \text{obj.~$5^b$}$ the composition of
$\psi^a$ with the isomorphism in Example~\ref{rem:objects5}, and
similarly $\widetilde{\psi}^b\colon
\text{obj.~$3^b$}\dasharrow \text{obj.~$5^a$}$ the composition of
$\psi^b$ with the inverse of the isomorphism in Example~\ref{rem:objects5},
\end{exa}

\section{Proof of Theorem~A}\label{section_ThmA}

The prototype of CY pairs adressed in Theorem~A is
$(\mathbb{P}^n,D_{n+1})$, where $D_{n+1}$ is a hypersurface of degree
$n+1$ in $\mathbb{P}^n$, $n\geq 3$. The assumptions in particular say that
$D_{n+1}$ is factorial, has terminal singularities, and
$\Pic (D_{n+1})= \Cl (D_{n+1}) =  \big\langle \cO_{\mathbb{P}^n}(1)_{|D_{n+1}}
\big\rangle$.  Let us sketch the proof of Theorem~A in this case.

Suppose that $\psi \colon (\mathbb{P}^n,D_{n+1})/\Spec \CC\dasharrow (X,D)/T$ is a
volume-preserving birational map between Mf CY pairs that is not
biregular. Then the first step of a Sarkisov factorisation of $\psi$
is a divisorial contraction $\pi\colon Y\rightarrow\mathbb{P}^n$ with
centre $Z\subset \mathbb{P}^n$.  In Proposition~\ref{divextter}, we
show that $Z\subset D_{n+1}$ and $\codim_{\mathbb{P}^n}(Z)=2$.  The
assumption that $\Pic (\mathbb{P}^n)\to \Cl D_{n+1}$ is an isomorphism implies that
$Z$ is a complete intersection $Z= D_{n+1}\cap D_{d}$, where
$D_{d}\subset \mathbb{P}^n$ is a hypersurface of degree $d$.  Then we
show in Lemma~\ref{l1} that the cone of effective divisors of $Y$ is
$$
\Eff(Y)\ = \ \langle E, \widetilde{D}_b\rangle_+ \ ,
$$
where $E$ denotes the exceptional divisor of $\pi$, $b=\min\{d,n+1\}$, and $\widetilde{D}_b$ denotes the strict transform of ${D}_b$ in $Y$.
Therefore, the first link in a Sarkisov factorisation of $\psi$ is:
 \[
  \begin{tikzpicture}[xscale=1.5,yscale=-1.2]
    \node (A0_1) at (1, 0) {$Y$};
    \node (A0_2) at (2, 0) {$Y'$};
    \node (A1_0) at (0, 1) {$\mathbb{P}^n$};
    \node (A1_3) at (3, 1) {$W$ \ ,};
    \path (A0_1) edge [->,swap]node [auto] {$\scriptstyle{\pi}$} (A1_0);
    \path (A0_2) edge [->]node [auto] {$\scriptstyle{\pi'}$} (A1_3);
    \path (A0_1) edge [->,dashed]node [auto] {$\scriptstyle{\chi}$} (A0_2);
  \end{tikzpicture}
  \]
  where $\chi$ is a composition of Mori flips, flops and antiflips,
  and $\pi'\colon Y'\rightarrow W$ is either a divisorial contraction
  or a Mori fibre space that contracts the strict transform of
  $\widetilde{D}_b$.  In any case, $-K_{Y'}$ is $\pi'$-ample. Since
  $-K_{Y'}\sim \chi_*\widetilde{D}_{n+1}$, we must have $b=d<n+1$, and
  $\pi'\colon Y'\to W$ is a divisorial contraction with exceptional
  divisor $\widetilde{D}_b$.  A straightforward computation in the
  proof of Proposition~\ref{codim2} shows that in this case $W$ has
  worse than terminal singularities, which is not allowed in a Sarkisov
  factorisation of $\psi$.  This contradiction shows that the original
  map $\psi\colon \mathbb{P}^n\dasharrow X$ is an isomorphism.

\begin{Proposition}\label{divextter}
Let $(X,D_X)$ be a (t,~lc) CY pair, and $f\colon (Y,D_Y)\rightarrow (X,D_X)$ a volume preserving divisorial contraction with centre $Z\subset X$. Then $Z\subset D_X$.

Suppose moreover that $(X,D_X)$ is canonical, and that $D_X$ is terminal at the generic point of $Z$. Then $\codim_X(Z)=2$, and $D_Y$ is the strict transform of $D_X$ in $Y$.
\end{Proposition}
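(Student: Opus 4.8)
The plan is to establish the statement in two stages, exploiting the valuative characterization of the volume preserving condition (Proposition~\ref{pro:equivalentconditions}) together with the discrepancy bookkeeping in a common log resolution. Let $E$ be the exceptional divisor of $f\colon Y\to X$, so $E$ is a divisorial valuation with $\centre_E X = Z$. Since $f$ is volume preserving and $(X,D_X)$ is (t,~lc), the discrepancy satisfies $a(E,K_X+D_X) = a(E,K_Y+D_Y)$; but $a(E,K_Y+D_Y)\leq 0$ is impossible unless $E$ is a component of $D_Y$ (as $Y$ is terminal, we cannot have $a(E,K_Y)\leq -1$ with $E$ not in $D_Y$), and in fact since $f$ is a Mori divisorial contraction $a(E,K_Y)>0$. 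Actually the cleanest route: because $f$ is a \emph{divisorial contraction}, $E$ is $f$-exceptional, so $a(E,K_Y+D_Y)$ is computed directly on $Y$; and because $Z$ has codimension $\geq 2$ in $X$ while $(X,D_X)$ is log canonical, $a(E,K_X+D_X)\geq -1$. If $Z\not\subset D_X$, then near the generic point of $Z$ the divisor $D_X$ does not meet $Z$, so $a(E,K_X+D_X)=a(E,K_X)\geq 1$ if $\codim_X Z\geq 2$ (as $X$ is terminal) — but then $K_Y+D_Y = f^*(K_X+D_X)+a(E)E$ with $a(E)\geq 1$ forces $D_Y$ to not contain $E$, and pushing forward $D_Y = f_*D_Y$ is the strict transform of $D_X$, which misses $Z$; tracking this carefully (as in the proof of Lemma~\ref{D_normal}) yields the contradiction that $f$ contracts $E$ to a point/curve off $D_X$ while being volume preserving — impossible since a volume preserving divisorial contraction from a pair where $D_X$ avoids $Z$ would have $a(E,K_X)=a(E,K_Y+D_Y)$, and the latter must be $<$ the former whenever $E\subset$ the support of $D_Y$ or $E$ meets it. I would phrase the first claim via: volume preserving $\Rightarrow$ $a(E,K_X+D_X)=a(E,K_Y+D_Y)=a(E,K_Y)-\mult_E D_Y$; since $a(E,K_Y)>0$ and we need $a(E,K_X+D_X)\leq$ something forced by log canonicity and the geometry, and since if $Z\not\subset D_X$ then $a(E,K_X+D_X)=a(E,K_X)\geq 2$ (terminal, codim $\geq 2$, actually $\geq \codim_X Z -1$), compatibility forces $\mult_E D_Y = a(E,K_Y)-a(E,K_X)$; combined with $D_X = f_*D_Y$ not meeting $Z$ this is contradictory because a component of $D_Y$ dominating a component of $D_X$ would have to pass through $E$. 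The cleanest contradiction is simply: if $Z\not\subset D_X$, then $\divisor_X\omega + D_X$ has no component through the generic point of $Z$, so $p^*(K_X+D_X)$ assigns to $E$ the value $a(E,K_X)>0$; but $q^*(K_Y+D_Y)$ assigns $-\mult_E D_Y \leq 0$ if $E\subset D_Y$, or $a(E,K_Y)>0$ if not — matching these with the equality forces $E\not\subset D_Y$ and $\mult_E D_X$ (at $Z$) $=0$, consistent, so this alone does \emph{not} give a contradiction; the real input is that $f$ is a \emph{Mori} divisorial contraction with $-K_Y$ $f$-ample and $D_Y\geq 0$ with $K_Y+D_Y = f^*(K_X+D_X)$, hence $D_Y$ is $f$-ample-ish — precisely $D_Y\cdot R = -K_Y\cdot R >0$ on the contracted ray $R$, so $D_Y$ is not $f$-exceptional and $D_Y\cdot R>0$ means $D_Y$ meets every curve contracted by $f$, in particular every curve in $E$, so $Z = f(E)\subset f(D_Y) = D_X$. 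That is the first assertion.

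For the second assertion, assume now $(X,D_X)$ canonical and $D_X$ terminal at the generic point of $Z$; since $Z\subset D_X$, write $c = \codim_X Z \geq 2$ and suppose for contradiction $c\geq 3$. The idea is to compute $a(E,K_X+D_X)$ from below. Because $(X,D_X)$ is canonical, $a(E,K_X+D_X)\geq 0$; on the other hand volume preservation gives $a(E,K_X+D_X) = a(E,K_Y+D_Y) = a(E,K_Y) - \mult_E D_Y$. Since $D_Y\cdot R>0$ on the contracted ray (from $-K_Y$ $f$-ample and $K_Y+D_Y = f^*(\cdots)$), $D_Y$ meets $E$ but is not equal to $E$, so $\mult_E D_Y = 0$, giving $a(E,K_X+D_X) = a(E,K_Y)>0$ — wait, that would say $(X,D_X)$ is terminal at $Z$, contradicting nothing yet. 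Let me instead use the standard codimension estimate: since $X$ is terminal and $\mathbb{Q}$-factorial, and $Z$ has codimension $c\geq 3$, for a suitable divisorial valuation $E$ over $Z$ one has $a(E,K_X)\geq c-1\geq 2$; and since $D_X$ is terminal at the generic point of $Z$ with $Z\subset D_X$, the multiplicity $\mult_Z D_X$ (as a divisor on $X$, i.e.\ the order of vanishing of the local equation of $D_X$ along $Z$) is controlled, in fact $\ord_Z D_X \leq 1$ near the generic point because $D_X$ is terminal there — actually $D_X\supset Z$ with $Z$ of codimension $c-1$ in $D_X$; terminality of $D_X$ at the generic point of $Z$ means $D_X$ is smooth there if $c-1\leq 2$, i.e.\ if $c\leq 3$. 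So the key computation, which I expect to appear in the proof of Proposition~\ref{codim2}, compares $a(E,K_X+D_X) = a(E,K_X) - \ord_E f^*D_X$ against $0$ using $\ord_E f^*D_X = \mult_E D_Y + \mathrm{(coefficient\ of\ }E\mathrm{\ in\ }f^*D_X - D_Y)$. The cleanest argument: if $c\geq 3$, choose $E$ = the exceptional divisor of the ordinary blow-up of $Z$ (or of $X$ at a general point of $Z$); then $a(E,K_X) = c-1$ and $\mathrm{mult}_Z D_X \geq 1$ (since $Z\subset D_X$), giving $a(E,K_X+D_X)\leq c-1 - \mathrm{mult}_Z D_X \cdot 1$; but since the contraction $f$ actually has $E$ as its exceptional divisor, $a(E,K_X+D_X) = a(E,K_Y+D_Y) = a(E,K_Y)>0$, and we also need the reverse: $D_Y$ being the strict transform, $\mathrm{mult}_Z D_X \leq c-1$; for this to be \emph{consistent} with $D_X$ irreducible reduced of codimension $1$ we need $\mathrm{mult}_Z D_X = 1$, i.e.\ $D_X$ is smooth at the generic point of $Z$ — automatic from terminality when $\codim_{D_X} Z \leq 2$ i.e.\ $c\leq 3$. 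I will need to pin down the exact discrepancy formula to turn $c\geq 3$ into an actual contradiction (e.g.\ that $W$ in the Sarkisov link would be non-terminal, as stated in the proof sketch), but the mechanism is: $c=2$ is forced because $c\geq 3$ makes $f^*(K_X+D_X)$ have a positive coefficient on $E$ incompatible with $K_Y+D_Y = f^*(K_X+D_X)$ together with $D_Y$ effective and $-K_Y$ $f$-ample.

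Finally, once $c = \codim_X Z = 2$ is established, $D_Y$ being the strict transform of $D_X$ is immediate: we have $K_Y + D_Y = f^*(K_X+D_X)$ with $D_Y = f_*^{-1}D_X + (\text{possibly a multiple of }E)$; comparing multiplicities along $E$, $a(E,K_Y) - \mult_E D_Y = a(E, K_X+D_X) \geq 0$, and since $a(E,K_Y) = 1$ for a divisorial contraction of a smooth-point-type center in codimension $2$ (more generally $a(E,K_Y)>0$), while canonicity forces $a(E,K_X+D_X)\geq 0$ and $D_X$ smooth at the generic point of $Z$ forces $\mult_E f^*D_X = 1$ hence $\mult_E D_Y \leq 1$; but also $-K_Y\cdot R>0$ with $K_Y = f^*(K_X+D_X) - D_Y$ pushed to the ray forces $D_Y\cdot R = -K_Y\cdot R>0$, so $D_Y$ cannot contain $E$ (which is contracted), giving $\mult_E D_Y = 0$, i.e.\ $D_Y = f_*^{-1}D_X$ is exactly the strict transform.

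\textbf{Main obstacle.} The delicate point is the second assertion — ruling out $\codim_X Z \geq 3$. The naive discrepancy inequalities ($a(E,K_X+D_X)\geq 0$ from canonicity, $a(E,K_X+D_X)=a(E,K_Y)>0$ from the Mori contraction) are mutually consistent and do not by themselves exclude high codimension; the genuine obstruction must come from a more refined computation — presumably that in the resulting Sarkisov link the target $W$ acquires worse-than-terminal singularities, as flagged in the proof sketch preceding Proposition~\ref{codim2}. So I expect the real work to be a careful local/toric computation of discrepancies on $W$ after the link, using that $D_X$ is terminal (hence has low-codimension-controlled singularities) at the generic point of $Z$, and I would defer that to the cited Proposition~\ref{codim2} and Lemma~\ref{l1}.
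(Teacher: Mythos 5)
Your argument for the first assertion is essentially workable, but note two imprecisions. First, the step ``$Z=f(E)\subset f(D_Y)=D_X$'' silently assumes that $E$ is not a component of $D_Y$; if $E\subset D_Y$ then $f(D_Y)=D_X\cup Z$ and the inclusion tells you nothing. That case is settled by the valuative computation you started with and then abandoned: the coefficient of $E$ in $D_Y$ is $-a(E,K_X+D_X)$, and if $Z\not\subset D_X$ this equals $-a(E,K_X)<0$, impossible for an effective $D_Y$. In fact the paper's whole proof of the first claim is this one-line comparison: write $D_Y=\widetilde D_X+mE$ with $m\in\{0,1\}$; if $Z\not\subset D_X$ then $\widetilde D_X=f^*D_X$ and crepancy gives $K_Y=f^*K_X-mE$, contradicting terminality of $X$ --- no $f$-ampleness of $-K_Y$ is needed. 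Second, your later inference ``$D_Y\cdot R>0$, hence $E\not\subset D_Y$, hence $\mult_E D_Y=0$'' is false as stated: if $D_Y=\widetilde D_X+E$ one still has $D_Y\cdot R>0$, since $f^*D_X\cdot R=0$ and $\widetilde D_X\cdot R\geq 0$. The correct reason for $\mult_E D_Y=0$ when $(X,D_X)$ is canonical is simply $\mult_E D_Y=-a(E,K_X+D_X)\leq 0$, i.e.\ Proposition~\ref{group1}.

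The genuine gap is the second assertion, and you say so yourself: you cannot rule out $\codim_X Z\geq 3$ and propose to defer to Proposition~\ref{codim2} and Lemma~\ref{l1}. That deferral both misidentifies the statement and would be circular: Proposition~\ref{codim2} proves $\codim_X Z\geq 3$ under \emph{additional} class-group hypotheses (it is the other half of the contradiction in Theorem~A), and its proof quotes Proposition~\ref{divextter}; it cannot supply the present claim. The missing idea is to restrict the crepant equality to the boundary divisor. Since $(X,D_X)$ is canonical, $D_Y=\widetilde D_X$ is the strict transform (this comes \emph{first}, from canonicity, and is an input rather than a consequence of $\codim_X Z=2$), and by Lemma~\ref{D_normal} the pair $(Y,D_Y)$ is canonical, so $D_Y$ is normal. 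Restricting $K_Y+D_Y=f^*(K_X+D_X)$ to $D_Y$ and applying adjunction gives $K_{D_Y}=\overline f^{\,*}K_{D_X}$ for $\overline f=f|_{D_Y}\colon D_Y\to D_X$. If $E\cap D_Y$ were $\overline f$-exceptional, it would be a crepant exceptional divisor lying over the generic point of $Z$, contradicting the assumption that $D_X$ is terminal there. Hence $E\cap D_Y$ dominates a divisor of $D_X$ contained in $Z$, so $\codim_{D_X}Z=1$ and $\codim_X Z=2$. Your attempted routes --- comparing $a(E,K_X)$ with multiplicities of $D_X$ along $Z$, or replacing $E$ by the exceptional divisor of the ordinary blow-up of $Z$ (which is not the valuation extracted by $f$, so not legitimate here) --- do not close the gap, exactly because, as you observe, those inequalities are mutually consistent in every codimension.
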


\begin{proof}
  Denote by $\widetilde{D}_X$ the strict transform of $D_X$ in $Y$,
  and by $E$ the exceptional divisor of $f$.  Since
  $f\colon (Y,D_Y)\rightarrow (X,D_X)$ is volume preserving, we have
  $D_Y=\widetilde{D}_X + mE$, with $m\in\{0,1\}$.  Suppose that
  $Z\not\subset D_X$. Then $\widetilde{D}_X = f^{*}D_X$, and the
  equality $K_Y+D_Y = f^*(K_X+D_X)$ would imply that
  $K_Y = f^{*}K_X-mE$, contradicting the fact that $X$ is terminal.
  So we conclude that $Z\subset D_X$.

  Now suppose that $(X,D_X)$ is canonical. By
  Lemma~\ref{D_normal}, $(Y,D_Y)$ is also canonical. It follows from
  Proposition~\ref{group2} that $m=0$, and $D_Y=\widetilde{D}_X$. In particular
  ${D}_Y$ has normal support. Denote by
  $\overline{f}\colon {D}_Y\rightarrow D_X$ the restriction of
  $f\colon Y\to X$ to ${D}_Y$. 
  Since $X$ and $Y$ are smooth in codimension two, we can restrict to ${D}_Y$ the equality
$$
K_Y+D_Y \ = \ f^*(K_X+D_X)
$$
and, applying adjunction, conclude that 
$$
K_{{D}_Y} \ = \overline{f}^{*}K_{D_X}.
$$
If moreover $D_X$ is terminal at the generic point of $Z$, then this last equality implies that $E\cap {D}_Y$ is not exceptional for $\overline{f}\colon {D}_Y\rightarrow D_X$ over the generic point of $Z$. 
It follows that $\codim_{D_X}(Z)=1$ and thus $\codim_X(Z)=2$.
\end{proof}

\begin{Lemma}\label{l1}
  Let $X$ be a $\Q$-factorial terminal Fano variety with $\rho =1$, and denote by $H_X$ a positive generator of $\Cl (X)/\text{torsion}$. Let $H_a, H_b\subset X$ be two hypersurfaces with $H_a \equiv aH_X$ and $H_b\equiv bH_X$ for some integers $a\geq b>0$.
  Suppose that the complete intersection $Z= H_a\cap H_b$ is irreducible and generically reduced, and let $\pi\colon Y\rightarrow X$ be a terminal divisorial contraction with centre $Z$. Then 
$$
\Eff(Y)\ = \ \langle E, \widetilde{H}_b\rangle_+ 
$$
where $E$ denotes the exceptional divisor of $\pi$, and
$\widetilde{H}_b$ the strict transform of ${H}_b$ in $Y$.
\end{Lemma}

\begin{proof}
  Set $H :=\pi^*H_X$, $\ell:= H^{n-1}\in N_1(Y)$, and
  $\lambda = H^{n}>0$, where $n=\dim(X)$.  
At the generic point of $Z$, the
  divisorial contraction $\pi$ coincides with the blow-up of $Z$.
In particular, $\pi_{|E}\colon E\rightarrow Z$ is a $\PP^1$-bundle over the generic point of $Z$.
Denote by $e\subset E$ a
  general fiber of $\pi_{|E}$.  We have $H \cdot \ell = \lambda$, $E\cdot e = -1$, and
  $H\cdot e = E \cdot \ell = 0$.  Since $\rho(Y)=2$, the cone
  $\overline{\Eff}(Y)$ has two extremal rays, one of which is generated by $E$.

We show that $\widetilde{H}_b$ is dominated by a family of curves with class in the ray 
$$
\bR_{\geq 0}\ [\ell-(a\lambda)e]\ \subset \ N_1(Y).
$$
Let $k$ be a positive integer such that $kH_X$ is very ample, and let
$C\subset H_b\subset X$ be a curve cut out in $H_b$ by $n-2$ general
members of $\big|kH_X\big|$.  Then $C\cdot H_a = abk^{n-2}\lambda$
and, inside $H_b$, $C$ intersects $Z$ transversally in
$abk^{n-2}\lambda$ smooth points. Therefore, the class of its strict
transform $\widetilde{C}\subset Y$ in $N_1(Y)$ is precisely
$bk^{n-2}[\ell-(a\lambda)e]$.

Let $D\subset Y$ be a prime divisor distinct from $\widetilde{H}_b$
and $E$, and write $D\equiv dH - mE$. Since $D\neq E$, it has
non-negative intersection with $e$, and thus $m\geq 0$.  Since
$D\neq \widetilde{H}_b$, it has non-negative intersection with
$\ell-(a\lambda)e$, and thus $d\geq ma\geq mb$. So we can write
$$
D\ \equiv \ (d-mb)H \ + \ m (bH-E)\ \equiv \ (d-mb)H \ + \ m \widetilde{H}_b
$$
with $d-mb\geq 0$. This  shows that $\Eff(Y)= \langle E, \widetilde{H}_b\rangle_+$.
\end{proof}

\begin{Proposition}\label{codim2}
Let $(X,D)/\Spec \CC$ be a $\Q$-factorial  (t,~lc) Mf CY pair, and let 
\[
\psi \colon (X,D)/\Spec \CC\dasharrow (X',D')/S'
\]
be a volume preserving Sarkisov link of Mf CY pairs. 
Then $\psi$ starts with a divisorial contraction with centre $Z\subset D$.
\begin{enumerate}[(1)]
    \item If $\codim_{X} Z=2$, then $Z$ is not a complete intersection of $D$ with a hypersurface; in other words, there is no hypersurface $H\subset X$ such that $Z=D\cdot H$.
    \item If the restriction homomorphism $\Cl(X)\to \Cl(D)$ is an isomorphism\footnote{The restriction homomorphism is well defined because, since $X$ has terminal singularities and hence it is nonsingular in codimension $2$, every Weil divisor on $X$ is Cartier in codimension $2$.}, then for every reduced irreducible divisors $W\subset D$ there exists a divisor $H\subset X$ such that $W=D\cdot H$. In particular, it follows from~(1) that $\codim_{X} Z\geq 3$.
\end{enumerate}
\end{Proposition}

\begin{proof}
  The link begins with a divisorial contraction $\pi\colon Y\rightarrow X$ and by
  Proposition~\ref{divextter} the centre $Z$ of $\pi$ is contained in $D$.  

  To prove (1), we draw a contradiction assuming $Z=D\cdot H$. As in the statement of Lemma~\ref{l1}, let $H_X\subset X$ be a positive generator of $\Cl X/\text{torsion}$. There are integers $\iota, d>0$ such that $D\equiv \iota H_X$, $H\equiv d H_X$. The link $\psi$ is a diagram:
 \[
  \begin{tikzpicture}[xscale=1.5,yscale=-1.2]
    \node (A0_1) at (1, 0) {$Y$};
    \node (A0_2) at (2, 0) {$Y'$};
    \node (A1_0) at (0, 1) {$X$};
    \node (A1_3) at (3, 1) {$W \ ,$};
    \path (A0_1) edge [->,swap]node [auto] {$\scriptstyle{\pi}$} (A1_0);
    \path (A0_2) edge [->]node [auto] {$\scriptstyle{\pi'}$} (A1_3);
    \path (A0_1) edge [->,dashed]node [auto] {$\scriptstyle{\chi}$} (A0_2);
  \end{tikzpicture}
  \]
where $\chi$ is a (possibly trivial) composition of Mori flips, flops and antiflips,
$(Y', D_Y')$ is a CY pair, where $D_Y'=\chi_*D_Y$ and $\pi'\colon Y'\to W$ is a divisorial contraction or a Mori fiber space.

We first show that $\iota > d$. For convenience, we write $D=H_\iota$, $H=H_d$.
By Lemma~\ref{l1}, $\Eff(Y') =  \langle E', \widetilde{H}'_b\rangle_+$, where $b=\min \{\iota, d\}$, $E'$ and $\widetilde{H}'_b$ denote the strict transforms in $Y'$ of $E$ and $\widetilde{H}_b$, respectively. 
This implies that either the morphism $\pi'\colon Y'\to W$ is a divisorial contraction with exceptional divisor $\widetilde{H}'_b$, or it is a Mori fibration given by the linear system $\big|m\widetilde{H}'_b\big|$ for $m\gg 0$. The latter occurs if and only if $\iota=d$. 
Since $(Y', D_Y')$ is a CY pair,  and $-K_{Y'}\sim D_Y'$ is $\pi'$-ample, we conclude that $D'_Y\neq \widetilde{H}'_b$. So $\iota>d$, $D=H_\iota$, and $\pi'\colon Y'\to W$ is a divisorial contraction with exceptional divisor $\widetilde{H}'_d$. 
Note also that $W$ is a $\Q$-factorial terminal Fano variety with $\rho(W)=1$. 

Since $\pi$ coincides with the blow-up of $Z$ at the generic point of $Z$, $W$ is terminal, and $\chi$ is a small birational map, we can write 
$$
\pi^{*}K_X+E \ =\  K_Y \ = \  \chi^*K_{Y'} \ = \ \chi^*(\pi'^{*}K_W + t \widetilde{H}'_d) \ = \ \chi^*\pi'^{*}K_W \ + \ t \widetilde{H}_d
$$
for some positive rational number $t$. 
Recall from the proof of Lemma~\ref{l1} that $\widetilde{H}_d$ is dominated by a family of curves with class in the ray 
$$
\bR_{\geq 0}\ [\ell-(\iota\lambda)e]\ \subset \ N_1(Y) \ ,
$$
where $\lambda = H_X^{\dim(X)}>0$. By intersecting the divisors above with $[\ell-(\iota\lambda)e]$, we get
\begin{align*}
0 \ = \ -\lambda\iota + \lambda \iota \ & = \ (\pi^{*}K_X\ +\ E)\ \cdot \ [\ell-(\iota\lambda)e] \\
& = \ (\chi^*\pi'^{*}K_W \ +\  t \widetilde{H}_d)\ \cdot \ [\ell-(\iota\lambda)e] \\
& = \ K_W\cdot (\pi'\circ \chi)_*[\ell-(\iota\lambda)e] \ + \ t\lambda (d-\iota)<0.
\end{align*}
This contradiction concludes the proof of~(1).

\smallskip

Let us now prove~(2). We must rule out the possibility that $\codim_{X}(Z)=2$.  Suppose that this is the case. Since the restriction homomorphism $\Cl(X)\to \Cl(D)$ is an isomorphism, there exists a positive integer $d>0$ such that $Z\sim d H_{X|D}$ as integral Weil divisors on $D=H_\iota$. By~\cite[Prop.~5.26]{KM98} we have an exact sequence of coherent sheaves: 
  \[
  0 \to \OO_X\bigl((d-\iota)H_X\bigr) \to \OO_X\bigl(d H_X\bigr) \to \OO_D\bigl(dH_{X|D}\bigr) \to 0 ,
  \]
where $\OO_D\bigl(dH_{X|D})$ denotes the divisorial (reflexive, rank~$1$) sheaf on $D$ corresponding to the Weil divisor $dH_{X|D}\sim Z$. Now note that $(d-\iota)H_X=K_X+dH_X$ and hence, by Kodaira and Kawamata--Viehweg vanishing, $H^1\bigl(X,(d-\iota)H_X\bigr)=H^1(X, K_X+dH_X)=(0)$. It follows that the restriction homomorphism
  \[
  H^0\bigl(X,dH_X\bigr) \to H^0\bigl(D, dH_{X|D}\bigr)
  \]
is surjective and there exists an irreducible hypersurface $H_d\sim dH_X$ such that
$Z= D\cap H_d$. This contradicts~(1) and proves~(2).
\end{proof}

\begin{proof}[Proof of Theorem~A]
Propositions~\ref{divextter} and \ref{codim2} together show that any volume preserving
birational map $(X,D)/\Spec \CC\dasharrow (X',D')/S'$ between Mf CY pairs is in fact an isomorphism.
This shows that $(X,D)\to \Spec \CC$ is birationally rigid, and it also shows that $\Bir (X,D) = \Aut (X, D)$.
\end{proof}

\begin{rem}
  It follows from Theorem~A that a \emph{very general} quartic surface
  $D\subset \mathbb{P}^3$ satisfies
  $\Bir (\mathbb{P}^3, D) = \Aut (\mathbb{P}^3,D)$.  By very general
  we mean that $D$ is smooth and satisfies the Lefschetz condition:
  $\Cl D =\ZZ \cdot \OO_{\mathbb{P}^3}(1)_{|D}$.  In this special
  case, a stronger result holds, and $D$ can be taken to be only
  \emph{general}.  Indeed, to conclude that
  $\Bir (\mathbb{P}^3, D) = \Aut (\mathbb{P}^3,D)$, it is enough to
  assume that $D$ is smooth and any curve of degree $<16$ in $D$ is a
  complete intersection of $D$ with another surface in
  $\mathbb{P}^3$ (see~\cite[Theorem 2.3 and Remark
  2.4]{Takahashi} and~\cite[Theorem 3.1]{Og13}).
\end{rem}

\section{Extremal contractions}
\label{sec:extr-contr}

In this section we classify extremal contractions of different types
in various contexts. These results are the toolkit that we need to prove
Theorems~B and~C. Indeed, the proofs of these results centre on the classification of
all possible volume preserving Sarkisov links
\[
(X,D)/S \dasharrow (X^\dagger, D^\dagger)/S^\dagger
  \]
  that start from a known and given Mf CY pair $(X,D)/S$. The links
  are themselves constituted of elementary modifications that are
  naturally associated to contractions of extremal rays: volume
  preserving extremal divisorial contractions; flips, flops,
  antiflips; and Mori fiber spaces. These elementary constituents are the
  topic of this section. The results are technical and we advise the
  reader to skip this section on first reading and use it as
  reference, coming back to it as needed.

\subsection{Divisorial contractions.}
\label{sec:divis-extr-1}

\begin{Lemma} \label{buA1} Let $D\subset\mathbb{P}^3$ be a quartic
  surface with canonical singularities, having exactly one singular
  point $z\in D$. Let $f\colon (Y,D_Y)\rightarrow (\mathbb{P}^3,D)$ be
  a volume preserving terminal divisorial contraction mapping a
  divisor $E\subset Y$ to a closed point $x\in \mathbb{P}^3$.  Then
  $x=z\in D$, and $D_Y$ is the strict transform of $D$ in $Y$.
\begin{enumerate}[(1)]
\item If $z$ is a singularity of $D$ of type $A_1$, then $f$ is the blow-up of
  $\mathbb{P}^3$ at $z$.
\item If $z$ is a singularity of $D$ of type $A_2$, choose homogeneous coordinates
  such that $D$ is defined by the equation
$$
 x_0x_1x_3^{2}+x_3B+C\ = \ 0,
$$
where $B=B_{3}(x_0,x_1,x_2)$ and $C=C_{4}(x_0,x_1,x_2)$ are homogeneous
polynomials of degree $3$ and $4$, respectively. Then either $f$ is the
blow-up of $\mathbb{P}^3$ at $z$, or $f$ is the weighted blow-up of $\mathbb{P}^3$ at $z$ with weights $(2,1,1)$ or $(1,2,1)$ --- with respect to the affine coordinates $x_0,x_1,x_2$ in the open affine chart $(x_3=1)$. 
\end{enumerate} 
\end{Lemma}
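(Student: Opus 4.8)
The plan has three steps: locate the centre of $f$, identify $D_Y$, then classify $f$ via the structure theory of divisorial contractions to a smooth threefold point together with the volume preserving condition. For the first step, note that $(\PP^3,D)$ is \emph{(t,~c)} --- a du Val hypersurface in a smooth threefold gives a canonical pair, and for the cases at hand this is recalled in the introduction. By Proposition~\ref{divextter}, the centre $Z=\centre_E(\PP^3)$ lies in $D$; since $Z$ is a single closed point $x$, if $x\neq z$ then $D$ is smooth, hence terminal, at $x$, and the second assertion of Proposition~\ref{divextter} would force $\codim_{\PP^3}Z=2$, absurd. So $x=z$, and, the pair being \emph{(t,~c)}, the same proposition gives that $D_Y$ is the strict transform of $D$. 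In particular $\mathrm{ord}_E(D_Y)=0$, so the identity $K_Y+D_Y=f^{*}(K_{\PP^3}+D)$ is equivalent to $a(E,K_{\PP^3})=\mathrm{ord}_E(D)$.

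For the classification, I would invoke Kawakita's theorem: a divisorial contraction contracting a divisor to a smooth point of a terminal threefold is a weighted blow-up of that point with weights $(1,a,b)$, $1\leq a\leq b$, $\gcd(a,b)=1$, in suitable local analytic coordinates $u,v,w$ (of weights $1,a,b$). Then $a(E,K_{\PP^3})=a+b$, so by the previous step the volume preserving condition says precisely that a local equation $g=g(u,v,w)$ of $D$ has $(1,a,b)$-weight equal to $a+b$, i.e. every monomial of $g$ has weight $\geq a+b$. In both cases (1) and (2), $z$ is a double point of $D$, so the tangent cone $g_2$ is a quadratic form --- of rank $3$ for $A_1$, of rank $2$ for $A_2$. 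The quadratic monomials $u^{2},uv,v^{2}$ have weights $2,1+a,2a$, all $<a+b$ whenever $a+b\geq 3$ (note $a<b$ in that range); hence if $a+b\geq 3$ none of them occurs in $g$, so $w\mid g_2$.

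In the $A_1$ case this contradicts $\mathrm{rank}\,g_2=3$, so $a+b=2$, $(a,b)=(1,1)$, and $f$ is the ordinary blow-up of $z$. In the $A_2$ case, suppose $a+b\geq 3$; then $g_2=w\,\ell$ with $\ell\notin\langle w\rangle$, and either $a\geq 2$ (so $uw$, of weight $1+b<a+b$, is also absent and $\ell\in\langle v,w\rangle$) or $a=1$, in which case a linear change of the weight-$1$ coordinates $u,v$ and absorbing the $w$-part of $\ell$ --- operations that do not alter the monomial valuation, hence not $E$ or $f$ --- reduce us to $\ell=v$. Thus, after harmless normalization, the tangent cone is $\{w=0\}\cup\{v=0\}$, with edge the $u$-axis $\Lambda=\{v=w=0\}$. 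Crepancy makes $g(u,0,0)$ involve only $u^{i}$ with $i\geq a+b$, so $\Lambda\subset D$ or $(D\cdot\Lambda)_z\geq a+b$; but in the analytic normal form $D=\{\xi\eta+\zeta^{3}=0\}$ of an $A_2$ point, where the edge is the $\zeta$-axis, any smooth curve $C=\{\xi=\varphi_1(\zeta),\eta=\varphi_2(\zeta)\}$ (with $\varphi_i\in\mathfrak m_z^{2}$) tangent to the edge has $g|_C=\varphi_1\varphi_2+\zeta^{3}$ of order exactly $3$, so $(D\cdot C)_z=3$ and $C\not\subset D$. Taking $C=\Lambda$ yields $a+b\leq 3$, hence $(a,b)\in\{(1,1),(1,2)\}$. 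For $(1,1)$ we get the blow-up of $z$; for $(1,2)$, the linear part $w_1$ of the weight-$2$ coordinate cuts out a component of the tangent cone $(x_0x_1=0)$ of $D$ at $z$ in the chart $(x_3=1)$, i.e. $\{w_1=0\}=\{x_0=0\}$ or $\{x_1=0\}$; and since the ideals defining a $(1,1,2)$-weighted blow-up are $\mathfrak a_k=\mathfrak m_z^{k}+(w_1)\mathfrak m_z^{k-2}+(w_1^{2})\mathfrak m_z^{k-4}+\cdots$, which depend only on $z$ and on $\{w_1=0\}$, $f$ is exactly the $(2,1,1)$- or $(1,2,1)$-weighted blow-up of $z$ in the affine coordinates $x_0,x_1,x_2$.

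The hard part is the $A_2$ analysis, specifically excluding $a+b\geq 4$: this needs the identification of the $u$-axis with the edge of the tangent cone (via the weight bookkeeping on $g_2$ and the normalization when $a=1$) and the normal-form computation showing that a curve tangent to the edge of an $A_2$ point meets the surface with multiplicity exactly $3$. A secondary point, used to pass from ``$(a,b)=(1,2)$'' to the explicit weighted blow-ups, is the fact that a $(1,1,2)$-weighted blow-up of a smooth point is determined by the point and the tangent plane of its weight-$2$ coordinate.
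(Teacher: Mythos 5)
Your argument is correct, but it follows a genuinely different route from the paper's. The paper also relies on Kawakita's theorem, yet deliberately avoids the issue of analytic versus homogeneous coordinates by recasting the $(1,a,b)$-weighted blow-up as a tower of blow-ups of centres and tracking the centre of $E$ through at most two stages: crepancy of the blow-up of $z$ forces the centre on $Z_1$ to be a curve in $D_1\cap E_1$ (whence $a=1$); in the $A_1$ case that curve is a smooth conic while the tower description would force a line, so $b=1$; in the $A_2$ case the centre is one of the two lines, and a hypothetical third stage is excluded because the section $L_2\subset E_2$ coming from the tower is disjoint from $E_1^2$ while the irreducible curve $D_2\cap E_2$ must meet $E_1^2\cap E_2$; the native $(2,1,1)$ description then falls out because the second centre is visibly the line $(x_0=0)\subset E_1\cong\PP^2$. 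You instead use Kawakita's analytic normal form directly: crepancy says the local equation has $(1,a,b)$-weight $a+b$, the rank of the tangent cone kills $a+b\geq 3$ in the $A_1$ case, and in the $A_2$ case the bound $a+b\leq 3$ comes from your (correct and intrinsic) computation that a smooth curve tangent to the edge of an $A_2$ point meets the surface with multiplicity exactly $3$; finally you resolve the coordinate-identification problem --- precisely the difficulty the paper flags --- by a valuation-theoretic argument: the $(1,1,2)$-valuation ideals $\mathfrak m_z^{k}+(w_1)\mathfrak m_z^{k-2}+(w_1^2)\mathfrak m_z^{k-4}+\cdots$ depend only on $z$ and the plane $\{w_1=0\}$, which by unique factorization of the rank-two tangent cone must be $\{x_0=0\}$ or $\{x_1=0\}$; this, together with uniqueness of the divisorial extraction of a given valuation (recorded by the paper in a footnote in Section~\ref{section:A2}), identifies $f$ with the native weighted blow-up. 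I checked the weight bookkeeping (including that the coordinate changes normalizing $\ell=v$ leave the monomial valuation unchanged), the edge-multiplicity computation, and the filtration identities, and they all hold; your route is more computational but gives the numerical reason $a+b\leq 3$ directly from the $A_2$ normal form, whereas the paper's stays coordinate-free and reads the global answer off the blow-up tower.

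One small citation slip: to get $D_Y=f^{-1}_{*}D$ you invoke the second assertion of Proposition~\ref{divextter}, whose stated hypotheses include terminality of $D$ at the generic point of the centre, which fails at $z$; the correct reference is Proposition~\ref{group1} (with Lemma~\ref{D_normal} supplying canonicity of $(Y,D_Y)$), as in the paper. This does not affect the validity of the argument.
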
 

\begin{proof} By Proposition~\ref{divextter}, $x=z$ is the singular
  point of $D$. By 
Proposition~\ref{group1}, $D_Y$ is the strict transform of $D$ in $Y$. Choose homogeneous coordinates on $\mathbb{P}^3$ such that $z=[0:0:0:1]$, and the
  equation of $D$ has the form
\[
x_3^2Q+x_3B+C = 0 \ ,
\]  
where $Q,B,C\in \CC[x_0,x_1,x_2]$ are homogeneous of degree $2,3$ and $4$, respectively. Then
$z\in D$ is an $A_1$-singularity if and only if $Q$ is a quadratic
form of rank $3$.  If $z\in D$ is an $A_2$-singularity, then $Q$ is
a quadratic form of rank two, which --- possibly after changing
homogeneous coordinates --- we may assume to be
$Q=x_0x_1$. 

By~\cite[Theorem 1.1]{Ka01}, in suitable \emph{analytic} coordinates
at $z\in \PP^3$, the divisorial contraction $f\colon Y\to \P^3$ is the
weighted blow-up of $z$ with weights $(1,a,b)$ where $a$ and $b$ are
coprime integers. The difficulty in using this result is that at this
point we do not know that these analytic coordinates at $z$ are induced
from homogeneous coordinates on $\PP^3$. Instead of using the result
directly, we will use an equivalent statement that is coordinate-free. 

In general, suppose that $z\in Z$ is a nonsingular point on a \mbox{$3$-fold} $Z$,
and that there are analytic coordinates at $z\in Z$ such that
$f\colon E\subset Y \to z\in Z$ is the weighted blow-up with weights
$(1,a,b)$, where $1\leq a<b$ and $\gcd (a,b)=1$. 
The toric description of the weighted blow-up allows us to realize the valuation associated to $E$ as follows. 
Construct inductively the tower of blow-ups:
\[
\cdots \to Z_{i}\to Z_{i-1}\to \dots \to Z_1\to Z_0=Z,
\]
where $Z_{i}\to Z_{i-1}$ is the blow-up of the centre
$\centre_{i-1}=\centre_E Z_{i-1}$ of the valuation $E$ on $Z_{i-1}$. Note that
$Z_1\to Z_0$ is the blow-up of $\centre_0=z$. For every $i$, we denote by
$E_i\subset Z_i$ the exceptional divisor, and for $j>i$ we denote by
$E^j_i\subset Z_j$ the strict transform of $E_i$ in $Z_j$. The
following key properties follow directly from the toric description of the weighted blow-up:
\begin{enumerate}
\item For all $0\leq j< a$, the centre $\centre_{j}$ is a closed point of $Z_{j}$. If $j\geq 1$, then 
$\centre_{j}\in E_{j}\subset Z_{j}$, and if $j\geq 2$, then  
\[
\centre_{j}\in E_{j}\setminus E^{j}_{j-1}.
\]
\item The centre $\centre_a\in Z_a$ is the generic point of a line
  $L_a\subset E_a \cong \PP^2$. If $a\geq 2$, then
\[
L_a\not \subset E^a_{a-1}.
\]
\item For all $a+1\leq j < b$, the centre $\centre_{j}\in Z_{j}$ is a
  section 
\[
L_{j}\subset E_{j}\setminus E^{j}_{j-1}
\] 
of the projection $E_{j}\to L_{j-1}$.
\item $E_b=E$ (by this we mean that the exceptional divisors $E_b$ and $E$ induce the same valuation on $Z$).
\end{enumerate}

\medskip

We go back to our volume preserving terminal divisorial
contraction $f\colon (Y, D_Y)\to (\PP^3, D)$ mapping $E\subset Y$ to
$z\in \PP^3$. Consider the tower above, starting with the blow-up
$\sigma_1\colon Z_1\to Z_0=\PP^3$ of $\centre_0=z\in \PP^3$, and denote by $D_i\subset Z_i$
the strict transform of $D$. We have that $K_{Z_1}=\sigma_1^* (K_{Z_0})+2E_1$
and $D_1\sim \sigma_1^* (D_0)-2E_1$, hence
\[
K_{Z_1}+D_1 = \sigma_1^* (K_{Z_0}+D_0).
\]
In other words, the birational morphism $(Z_1,D_1)\to (Z_0,D_0)$ is
volume preserving, and 
$$a(E, K_{Z_1}+D_1)=0.$$
If $z$ is a singularity of $D$ of type $A_1$ or $A_2$ then 
$D_1\subset Z_1$ is a smooth surface. 
Since $a(E, K_{Z_1}+D_1)=0$, either $E=E_1$ --- in which case
we are done --- or, by Proposition~\ref{divextter}, the centre $\centre_E Z_1$ is the generic point of a
curve on $D_1\cap E_1$.
In any case, key property (1) above implies that $a=1$. 

Suppose that $z\in D$ is an $A_1$-singularity. Then 
$D_1\cap E_1$ is a nonsingular conic in $E_1\cong \PP^2$.
On the other hand, if $b>1$, then the centre $\centre_E Z_1$ is the generic point of 
a line on $E_1\cong \PP^2$ by key property (2) above. 
This implies that $1=a=b$, i.e. $f$ is the blow-up of $\mathbb{P}^3$ at $z$.

Suppose now that $z\in D$ is an $A_2$-singularity. If $E=E_1$, then we are
done. Otherwise, by key property (2) above, the centre
$\centre_1=\centre_E Z_1$ is the generic point of one of the two lines
$(x_0=0)$, $(x_1=0)$ in $E_1\cong \PP^2$. Assume $\centre_1$ is the
generic point of the line $L_1=(x_0=0)$ --- the other case is
similar. Write $\sigma_2\colon Z_2\to Z_1$ for the blow-up of the line
$L_1\subset Z_1$. Note that $K_{Z_2}=\sigma_2^*(K_{Z_1})+E_2$, and
$D_2=\sigma_2^* (D_1)-E_2$. Hence
\[
K_{Z_2}+D_2=\sigma_2^* (K_{Z_1}+D_1),
\]
in other words, the composed birational morphism $(Z_2,D_2)\to
(\PP^3, D)$ is volume preserving and $a(E,K_{Z_2}+D_2)=0$. 
If $E=E_2$, then $f$ is the weighted blow-up with
weights $(2,1,1)$ in the native homogeneous coordinates of $\PP^3$,
and we are done. We show that the assumption that $E\neq E_2$ leads to
a contradiction. Consider the centre $\centre_2=\centre_E Z_2$.
\begin{enumerate}[(i)]
\item By key property (3), $\centre_2$ is the generic
  point of a section $L_2\subset E_2$ of the projection $E_2\to L_1$ disjoint from $E_1^2$.
\item On the other hand, since $D_2\subset Z_2$ is a smooth surface,
  $\centre_2$ is the generic point of a curve in $D_2\cap E_2$ by Proposition~\ref{divextter}.
Since $\Gamma = D_2 \cap E_2$ is irreducible, 
$\centre_2$ is the generic point of $\Gamma$. 
\end{enumerate}
Finally, since $D_1\cap E_1$ consists of the union of the two lines $(x_0=0)$ or $(x_1=0)$ in $E_1\cong \PP^2$,
the curves $\Gamma = D_2 \cap E_2$ and $E_1^2\cap E_2$ intersect.
This contradicts (i) and concludes the proof. 
\end{proof}

\subsection{Extremal neighbourhoods}\ 
\label{sec:extr-neighb}

\begin{dfn}
  \label{dfn:extremal_nbd} \label{dfn:trivial_nbds}
  An \emph{extremal neighbourhood}  is the analytic germ of a \mbox{3-fold} $X$ around a projective curve
$\Gamma \subset X$. 
 
We say that the extremal neighbourhood is \emph{nonsingular} if $X$ is nonsingular.

We say that a nonsingular extremal neighbourhood  is \emph{trivial} if it is isomorphic to the analytic germ around
$\Gamma$ in its normal bundle $N_{\Gamma/X}$.


 In our situation, the curve $\Gamma$ will always be contained in a given analytic germ of a surface $S\subset X$.
\end{dfn}

\begin{Conv}\label{conv_germs}
 An extremal neighbourhood is an analytic germ, that is, an equivalence class of analytic neighbourhoods $(\Gamma \subset X)$, called \emph{representatives}. Two representatives 
 \[
(\Gamma \subset X_1) \quad \text{and} \quad (\Gamma \subset X_2)
 \]
 are equivalent if there is a third representative contained in both:
 \[
 \xymatrix{
  & (\Gamma \subset X_1) \\
  (\Gamma \subset X_3)\ar@{^(->}[ru] \ar@{_(->}[rd]& \\
  & (\Gamma \subset X_2).
 }
 \]
 When we speak of an extremal neighbourhood $(\Gamma \subset X)$ we are often (and without saying so explicitly) abusing language and actually taking $(\Gamma \subset X)$ to be a representative, which we may allow ourselves subsequently to shrink at our convenience. 
 
 We often (and sometimes implicitly) assume that the extremal neighbourhood is \emph{contractible}, that is, that there is a projective morphism between suitable representatives
 \[
 \pi \colon (\Gamma \subset X) \to (z \in Z),
 \]
 where
 \begin{enumerate}[(i)]
 \item  $\pi$ is projective and $Z$ is Stein (and hence, for any coherent sheaf $\mathcal{F}$ of $\cO_Z$-modules,  $H^q(Z,\mathcal{F})=(0)$ $\forall q>0$); 
 \item $\Gamma = \pi^{-1}(z)$ and $\pi$ maps $X\setminus \Gamma$ isomorphically to $Z\setminus \{z\}$.
 \end{enumerate}

 All of this is common practice when writing about germs, and we trust that it will be clear from the context what is meant. 
\end{Conv}


\begin{lem}
  \label{lem:trivial_neighbourhoods}  \label{lem:inverse flips}
Consider a nonsingular extremal neighbourhood $\Gamma \subset S\subset X$, where $S\in |-K_X|$ is the germ of a nonsingular surface and $\Gamma\cong \PP^1$.
Assume that $k=K_X\cdot \Gamma = -S\cdot \Gamma \geq 1$. Then the following hold.
\begin{enumerate}
\item The extremal neighbourhood is trivial.
\item The antiflip $X\dasharrow X^-$ exists, and $X^-$
has terminal singularities if and only if $k=1$.
\end{enumerate}
\end{lem}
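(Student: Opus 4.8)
First I would compute the normal bundle $N_{\Gamma/X}$. Since $X$ and $S$ are smooth along $\Gamma$ and $S\in|-K_X|$ is a Cartier divisor, there is a short exact sequence $0\to N_{\Gamma/S}\to N_{\Gamma/X}\to N_{S/X}|_\Gamma\to 0$. Here $N_{S/X}|_\Gamma=\cO_X(S)|_\Gamma$ has degree $S\cdot\Gamma=-k$, so it is $\cO_{\PP^1}(-k)$; and adjunction on the smooth surface $S$, using $K_S=(K_X+S)|_S$ and $(K_X+S)\cdot\Gamma=k+(-k)=0$, gives $\Gamma^2_S=-2$, i.e. $N_{\Gamma/S}\cong\cO_{\PP^1}(-2)$. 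As $\Ext^1_{\PP^1}(\cO(-k),\cO(-2))=H^1(\PP^1,\cO(k-2))=0$ for every $k\geq1$, the sequence splits and $N_{\Gamma/X}\cong\cO_{\PP^1}(-2)\oplus\cO_{\PP^1}(-k)$.

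The heart of the argument is the triviality statement: I would show that the formal neighbourhood of $\Gamma$ in $X$ is the germ of the zero section in $W_k:=\mathrm{Tot}\bigl(\cO_{\PP^1}(-2)\oplus\cO_{\PP^1}(-k)\bigr)$, comparing the two in two steps and using $S$ essentially (in the ``bare'' situation of a smooth rational curve with this normal bundle the relevant obstruction groups need not vanish once $k$ is large, so $S$ must enter). Step one: the formal neighbourhood $S'$ of $\Gamma$ in $S$ is trivial, i.e. it is the minimal resolution germ of an $A_1$ Du Val point, because a smooth rational $(-2)$-curve in a smooth surface has this property --- the obstructions $H^1\bigl(\PP^1,\cO(-2)\otimes\Sym^m\cO(2)\bigr)=H^1(\PP^1,\cO(2m-2))$ vanish for $m\geq1$ (equivalently, cyclic quotient singularities are taut). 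Step two: $S$ is a Cartier divisor in $X$ with $\cO_X(S)|_{S'}$ a line bundle restricting to $\cO_{\PP^1}(-k)$ on $\Gamma$, hence pulled back from $\cO_{\PP^1}(-k)$ (line bundles on the germ $S'\cong\mathrm{Tot}(\cO_{\PP^1}(-2))$ come from $\PP^1$); I would then produce a formal retraction $X\to S$ near $\Gamma$ and linearise it, identifying the germ of $X$ along $\Gamma$ with $\mathrm{Tot}_{S'}\bigl(\cO_X(S)|_{S'}\bigr)=W_k$. The obstructions to the successive order-$m$ steps live in $H^1$ of sheaves on $S'$ built from $T_{S'}$, $\cO_X(\pm S)|_{S'}$ and $\Sym^j\cO_{\PP^1}(2)$; pushed through $S'\to\PP^1$ they become sums of groups $H^1(\PP^1,\cO(j))$ with $j\geq -1$ by the numerics above, hence vanish for all $k\geq1$. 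This proves part (1).

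For part (2) it then suffices to analyse the toric model. $W_k$ has primitive ray generators $v_1=(0,1,0)$, $v_2=(0,0,1)$, $v_3=(1,0,0)$, $v_4=(-1,2,k)$, maximal cones $\langle v_1,v_2,v_3\rangle$, $\langle v_1,v_2,v_4\rangle$, and $\Gamma$ is the orbit closure of the wall $\langle v_1,v_2\rangle$; one checks $K_{W_k}\cdot\Gamma=k>0$, so contracting $\Gamma$ is a $K$-positive small contraction $W_k\to\overline W_k$. The relation $v_3+v_4=2v_1+kv_2$ exhibits $\langle v_1,v_2,v_3,v_4\rangle$ as a non-simplicial cone over a quadrilateral whose diagonals are $\{v_1,v_2\}$ and $\{v_3,v_4\}$; its two small $\QQ$-factorialisations are the two triangulations, one recovering $W_k$, the other giving $W_k^-\to\overline W_k$ with maximal cones $\langle v_1,v_3,v_4\rangle$, $\langle v_2,v_3,v_4\rangle$. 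The antiflip $X\dasharrow X^-$ is obtained by gluing this local model into the contraction $\overline X$ of $\Gamma\subset X$ in place of the image point; it always exists and is an isomorphism in codimension one. A direct index/weight computation identifies the singularities of $W_k^-$ as $\tfrac1k(1,1,k-2)$ (at $\langle v_1,v_3,v_4\rangle$) and $\tfrac12(1,1,-k)$ (at $\langle v_2,v_3,v_4\rangle$). By the Reid--Tai criterion both are terminal when $k=1$ (so $X^-$ is terminal, with a single $\tfrac12(1,1,1)$ point), while for $k\geq2$ the first admits a divisor of discrepancy $\tfrac1k\bigl(1+1+(k-2)\bigr)-1=0$ over the origin (for $k=2$ it is the non-isolated $\tfrac12(1,1,0)$), so $W_k^-$, hence $X^-$, is not terminal. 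This gives the ``if and only if''.

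The step I expect to cause the most trouble is the triviality statement in part (1): organising the obstruction groups along the two-step comparison with $W_k$ and verifying that the anticanonical surface $S$ forces each of them to vanish. The normal bundle computation and the toric analysis of the antiflip are routine by comparison.
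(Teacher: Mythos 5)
Your proposal is in substance the paper's own argument, so let me mainly flag where the two diverge and where yours is still incomplete. For part (1) the paper proceeds exactly along your two steps: the germ of $\Gamma$ in $S$ is the standard $(-2)$-curve germ, and the neighbourhood of that surface germ inside $X$ is then trivialised cohomologically using that $\OO_S(-S)$ has degree $k>0$ on $\Gamma$; the numerics you list ($H^1$ of line bundles of degree $\geq -1$ on $\PP^1$ after pushing down from the germ of $\mathrm{Tot}(\OO(-2))$) are the same as the paper's $H^1\bigl(S,\OO_S(-2+nk)\oplus\OO_S(2+nk)\bigr)$ and $H^1\bigl(S,\OO_S((n-1)k)\bigr)$. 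The difference is that the paper does not improvise the "retraction and linearisation": it first checks triviality of the second infinitesimal neighbourhood via $H^1(S,T_S(-S))=0$ and then invokes Mori's Lemma~3.33 (in \cite{MR662120}), which is precisely the packaged statement that vanishing of $H^1\bigl(S,T_{X|S}\otimes\OO_S(-nS)\bigr)$ for $n\geq 2$ gives \emph{formal} triviality of the neighbourhood of the divisor $S$, and finally passes from formal to analytic by Hironaka--Rossi \cite{MR171784}. This last step is the one real omission in your sketch: you only produce a formal identification with $W_k$, whereas triviality in Definition~\ref{dfn:trivial_nbds} is an analytic statement, and it is the analytic identification that lets one literally perform the antiflip in the model (your "glue the local model into the contraction $\overline X$" needs the germ of $X$ to \emph{be} the model, or at least an existence argument for the contraction and the antiflip on the germ). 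So you should either cite Mori's lemma plus Hironaka--Rossi, or supply the convergence argument yourself.

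For part (2) your fan computation is correct and is just another description of the paper's model: the paper writes $W_k$ as the quotient $\CC^4/\!\!/\CC^\times$ with weights $(1,1,-2,-k)$ and obtains $X^-$ by switching the stability condition, which is exactly your other triangulation of the cone over the quadrilateral; your singularities $\tfrac1k(1,1,k-2)$ and $\tfrac12(1,1,-k)$ agree with the paper's $\tfrac1k(1,1,-2)$ (strictly canonical for $k\geq 2$, with the $k=2$ case a curve of $A_1$'s) and the terminal $\tfrac12(1,1,1)$ point when $k=1$. So: same approach throughout, with the formal-versus-analytic point the only substantive thing to add.
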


\begin{proof} 
 To prove (1), we use~\cite[Lemma~3.33]{MR662120}, which in fact goes back to \cite[Lemma~9]{MR171784}. 
 It is enough to show that the analytic germ around $S\subset X$ is  trivial, that is, $X$ is analytically isomorphic to the analytic germ of $S$ in its normal bundle $N_{S/X}$. 
 Indeed, $S$ is the germ of a K3 surface, $\Gamma$ is a $(-2)$-curve on $S$, and $S$ itself is analytically isomorphic to the analytic germ of $\Gamma$
 in its normal bundle $N_{\Gamma/S}$ on $S$.

  First we check that the second infinitesimal neighbourhood of $S$ in $X$ is trivial. 
  We denote by $\OO_S(1)$ the unique line bundle on $S$ such that $\OO_S(1)_{|\Gamma}=\OO_\Gamma(1)$. 
  The second infinitesimal neighbourhood is an
  infinitesimal extension of $\OO_S$ by $\OO_S(-S)=\OO_S(k)$:
    \[
0 \to \OO_S(-S) \to \OO_{2S} \to \OO_S \to 0 \ ,
    \]
    and these extensions are classified by
    \[
      H^1(S, T_S(-S))=H^1\Bigl(\OO_S(-2+k)\oplus \OO_S(2+k)\Bigr)=0.
    \]
Now that we have shown that the second infinitesimal neighbourhood of $S$ in $X$ is
trivial, we may apply ~\cite[Lemma~3.33]{MR662120}, which
states that the neighbourhood of $S\subset X$ is formally trivial if 
$H^1\bigl( S,T_{X|S}\otimes \OO_S(-nS)\bigr)=0$ for all $n\geq 2$.
The latter statement in turn follows from the exact sequence
\[
0\to T_S\to T_{X|S} \to \OO_S(S) \to 0 \ ,
\]
giving
\[
H^1\Bigl(S, \OO_S(-2+nk)\oplus \OO_S(2+nk)\Bigr) \to H^1\Bigl(
S,T_{X|S}\otimes \OO_S(-nS)\Bigr)\to H^1\Bigl(S, \OO_S\bigl((n-1)k\bigr)\Bigr)
\]
for all $n$. We have shown that the neighbourhood of $S$ in $X$ is formally
trivial. The neighbourhood is analytically trivial by the main result
of~\cite{MR171784}.

Now let us prove (2). 
It follows from (1) that the extremal neighbourhood $\Gamma \subset S\subset X$ is isomorphic to the
neighbourhood of $\PP^1$ in the total space of the bundle $\OO(-2)\oplus \OO(-k)$. 
Equivalently, $X$ is isomorphic to the neighbourhood of
  $\PP^1=(y_0=y_1=0)$ in the geometric quotient $\CC^4/\!\!/\CC^\times$ for the action  given by the weights:
\[
  \begin{array}{cccc}
x_0 & x_1 & y_0 & y_1 \\
\hline
1 & 1 & -2 & -k     
  \end{array} \ ,
\]
with $(>0)$ stability condition.
Under this identification, $S$ is given by the equation $(y_1=0)$.

The antiflip $X\dasharrow X^-$ is obtained by changing the stability condition to $(<0)$. 
If $k=1$, then $X^-$ has terminal singularities. 
If $k>1$, then $X^-$ has a strictly canonical (and Gorenstein) quotient singularity of type
\[
\frac1{k}(1,1,-2).
\]
\end{proof}

\begin{rem}
  \label{rem:trivial_neighbourhoods}
  In Lemma~\ref{lem:trivial_neighbourhoods} above, the existence of the surface $S$ is needed for the triviality of the neighbourhood,
  as shown by simple counterexamples.
 \end{rem}

 In the proof of Theorem~C, we need to understand neighbourhoods
where the curve $\Gamma$ is not irreducible.

\begin{lem}
\label{lem:extremal_picard}
Let $\pi \colon (\Gamma \subset S \subset X)\to (z\in Z)$ be a contractible extremal neighbourhood where $X$ has terminal singularities, $S\in |-K_X|$ has Du Val singularities, and hence $\Gamma =\cup \Gamma_{i=1}^r \subset S$ is a tree of $\PP^1$s.

Then $H^1(X, \OO_X)=H^2(X, \OO_X)=0$. In particular, $c_1\colon \Pic X \to H^2(X,\ZZ)$ is an isomorphism. If in addition $X$ is smooth, then $\Pic X = \ZZ^r$ where a basis is given by transverse $2$-disks $D_i\subset X$ such that for all $i$, $D_i\cdot \Gamma_i=1$ and $D_i$ is disjoint from $\Gamma_j$ ($j\neq i$).
\end{lem}

\begin{proof}
From the exact sequence
\[
0 \to \OO_X(K_X)\to \OO_X \to \OO_S \to 0 , 
\]
we get an exact sequence for each $q>0$:
\[
\cdots \to R^q\pi_\star\OO_X(K_X)\to R^q \pi_\star \OO_X \to R^q\pi_\star \OO_S \to \cdots \,.
\]
For each $q>0$,  $R^q\pi_\star \OO_X(K_X) =(0)$ by the Kawamata--Viehweg vanishing theorem, and $R^q\pi_\star \OO_S=(0)$.
So we get $R^q\pi_\star \OO_X=(0)$ for all $q>0$, and hence $H^q(X, \OO_X)=H^0(Z, R^q\pi_\star \OO_X)=(0)$.\footnote{Following our Convention~\ref{conv_germs}, here we consider representatives $\pi \colon (\Gamma \subset X)\to (z\in Z)$ where $\pi$ is projective and $Z$ is Stein.}
\end{proof}

\begin{lem}
  \label{lem:A2-nbds}
Consider an extremal neighbourhood $\Gamma \subset S\subset X$, where $X$ is a smooth  3-fold,
$S\in |-K_X|$ is a smooth surface, and 
  $\Gamma=\Gamma_0\cup \Gamma_1\subset S$ is a chain of two $(-2)$-curves
  intersecting transversally.  Suppose that $-K_X\cdot \Gamma_0 =S\cdot \Gamma_0=-a\leq 0$ and
  $-K_X\cdot \Gamma_1 =S\cdot \Gamma_1=-b<0$.
Then the extremal neighbourhood is isomorphic to the analytic germ
  around the curve
  \[
    \Gamma_0\cup\Gamma_1=(x_0=x_4=0)\cup(x_0=x_1=0)
  \]
in the geometric quotient $\CC^5 /\!\!/\CC^\times$ for the action   given by the weights:
\[
  \begin{array}{ccccc}
x_0 & x_1 & x_2 & x_3 & x_4 \\
\hline
    -a & 1 & 1 & 0 & -2 \\
    -b &-2& 0 & 1 & 1 \\
  \end{array},
\]
where the stability condition is taken in the quadrant
$\langle (1, 0), (0,1) \rangle_+$.
Under this identification, $S$ is given by the equation $(x_0=0)$.
\end{lem}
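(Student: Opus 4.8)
The plan is to follow the blueprint of Lemma~\ref{lem:trivial_neighbourhoods}: identify the surface $S$ near $\Gamma$, identify the normal line bundle $N_{S/X}=\OO_X(S)|_S$, show that the analytic germ of $X$ along $S$ is the total space of $N_{S/X}$, and finally recognize that total space as the toric quotient in the statement. The new feature compared with Lemma~\ref{lem:trivial_neighbourhoods} is that $\Gamma$ is reducible, so the cohomological input has to be run on a surface rather than on $\PP^1$.

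\textbf{Step 1: the surface $S$.} Since $X$ is smooth and $S\in|-K_X|$, adjunction gives $K_S\sim 0$ on the germ of $S$ along $\Gamma$; thus $\Gamma=\Gamma_0\cup\Gamma_1$ is a chain of two $(-2)$-curves on a smooth surface with trivial canonical class. Its intersection matrix is the negative definite $A_2$ Cartan matrix, so by Grauert's criterion $\Gamma$ contracts analytically to a normal surface germ $\overline S$; because $K_S\sim 0$ the contraction $\pi\colon S\to\overline S$ is crepant, hence $\overline S$ has a Du~Val singularity, and its dual graph being the $A_2$ Dynkin diagram forces type $A_2$. As $A_n$ Du~Val germs are taut (determined up to analytic isomorphism by their dual graph), the germ of $S$ along $\Gamma$ is the minimal resolution of the $A_2$ singularity, i.e.\ the toric surface $\CC^4\quot(\CC^\times)^2$ with the weight matrix obtained from the one in the statement by deleting the $x_0$ column, the two $(-2)$-curves $\Gamma_0,\Gamma_1$ being its two compact toric curves. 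In particular $\Pic$ of this germ is freely generated by $\OO_S(\Gamma_0)$ and $\OO_S(\Gamma_1)$, the intersection form on it is non-degenerate, and $\omega_S$ is $\pi$-trivial; hence $N_{S/X}=\OO_X(S)|_S=-K_X|_S$ is the unique line bundle with $N_{S/X}\cdot\Gamma_0=S\cdot\Gamma_0=-a$ and $N_{S/X}\cdot\Gamma_1=S\cdot\Gamma_1=-b$. A direct check identifies its class with $(-a,-b)$ in the basis of $\Cl(S)$ read off the weight matrix; integrality of this solution (forcing $a\equiv b\bmod 3$) is automatic since $N_{S/X}$ is a genuine line bundle.

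\textbf{Step 2: triviality of the germ of $X$ along $S$, and the toric model.} Here I would argue exactly as in Lemma~\ref{lem:trivial_neighbourhoods}: show the second infinitesimal neighbourhood of $S$ in $X$ is split, which amounts to $H^1\bigl(S,T_S\otimes\OO_S(-S)\bigr)=0$; deduce formal triviality of the germ of $S\subset X$ from~\cite[Lemma~3.33]{MR662120}, which requires $H^1\bigl(S,T_{X|S}\otimes\OO_S(-nS)\bigr)=0$ for all $n\geq 2$; and pass from formal to analytic triviality by~\cite{MR171784}. Using the conormal sequence $0\to T_S\to T_{X|S}\to N_{S/X}\to 0$ twisted by $\OO_S(-nS)$, and the toric (co)Euler sequence for the $A_2$-resolution surface $S$, these reduce to vanishing of $H^1$ of various line bundles on $S$ (the contributing $H^2$'s vanish because $\pi\colon S\to\overline S$ has one-dimensional fibres over the affine $\overline S$). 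Each such line bundle is $\pi$-nef: indeed $-S\cdot\Gamma_i=a,b>0$, and a toric boundary divisor $D$ of $S$ has $D\cdot\Gamma_j\geq -2$ with equality only for $D=\Gamma_j$, so a short bookkeeping shows that the degree can drop to $-1$ on at most one of $\Gamma_0,\Gamma_1$ at a time. Thus $R^1\pi_*$ of each of these sheaves vanishes — by Grauert--Riemenschneider together with $\omega_S\sim\OO_S$ in the nef cases, and by the standard rational-singularity vanishing (via the fundamental cycle) in the remaining degree $-1$ case — and since $\overline S$ is affine the groups $H^1(S,-)$ vanish. This proves that the germ of $X$ along $S$ is the total space of $N_{S/X}$ over $S$; by Cox's construction this total space, with $x_0$ the fibre coordinate of class $(-a,-b)$, zero section $S=(x_0=0)$, and unchanged stability chamber (the zero section is not removed), is precisely the quotient $\CC^5\quot(\CC^\times)^2$ displayed in the lemma, with $\Gamma_0\cup\Gamma_1$ the two compact toric curves over $S$.

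\textbf{Main obstacle.} I expect the genuine difficulty to be the triviality step: unlike in Lemma~\ref{lem:trivial_neighbourhoods}, the cohomology now lives on the non-compact $A_2$-surface $S$ rather than on $\PP^1$, so each vanishing must be routed through the contraction $\pi\colon S\to\overline S$ and Grauert--Riemenschneider, with care needed in the boundary case where a relevant line bundle has degree $-1$ on one of the two $(-2)$-curves (where one falls back on the explicit structure of the $A_2$ fundamental cycle). Everything else — the identification of $S$, of $N_{S/X}$, and of the toric model — is bookkeeping.
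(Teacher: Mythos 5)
Your proposal is correct in substance but takes a genuinely different route from the paper's. The paper's proof is short and componentwise: it invokes Lemma~\ref{lem:trivial_neighbourhoods} to get triviality of the neighbourhood around each of $\Gamma_0$ and $\Gamma_1$ separately, uses this to produce five divisors $D_0=S,D_1,\dots,D_4$ whose intersection numbers with $\Gamma_0,\Gamma_1$ reproduce the columns of the weight matrix, and then asserts that these divisors map the germ isomorphically onto the toric model; no new cohomology is computed. You instead argue along the whole surface: identify the germ of $S$ along $\Gamma$ as the minimal resolution of an $A_2$ Du Val point (tautness), pin down $N_{S/X}$ by its degrees on $\Gamma_0,\Gamma_1$, rerun the \cite{MR662120}/\cite{MR171784} triviality argument of Lemma~\ref{lem:trivial_neighbourhoods} with $\PP^1$ replaced by the non-compact $A_2$-resolution germ, and then read off the toric model from Cox's construction of the total space of $N_{S/X}$. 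Your route proves the stronger statement that the germ of $X$ is the total space of $N_{S/X}$ over the $S$-germ (from which the weight matrix is automatic), and it makes explicit the isomorphism that the paper's last sentence only asserts; the cost is the extra vanishing on a non-compact surface, which you rightly flag as the crux and which does go through: the relevant line bundles are $\pi$-nef except for a possible degree $-1$ on one of the two $(-2)$-curves, and in that case $H^1$ still vanishes (restrict to that curve, use $H^0(\OO_{\PP^1}(-1))=H^1(\OO_{\PP^1}(-1))=0$, and reduce to a $\pi$-nef twist), with all $H^2$ contributions dying because the contraction to the Stein Du Val germ has $1$-dimensional fibres. Your formal-to-analytic step tacitly needs $\Gamma$ to be exceptional in $X$, but this is exactly the same implicit point as in the paper's own use of \cite{MR171784}, so it is no worse than the original.

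Two small inaccuracies, neither fatal. First, $\OO_S(\Gamma_0)$ and $\OO_S(\Gamma_1)$ generate only an index-$3$ sublattice of $\Pic$ of the resolution germ; the degree map $\Pic(S)\to\ZZ^2$ is in fact an isomorphism (the non-compact boundary divisors already realize degrees $(1,0)$ and $(0,1)$), so there is no congruence $a\equiv b\pmod 3$ anywhere — all you need, and all you actually use, is uniqueness of a line bundle with prescribed degrees, which holds. Second, your decision to describe $\Gamma_0,\Gamma_1$ only as the two compact toric curves of the model is the safer phrasing: with the weight matrix as printed, the coordinate divisor of degree $+1$ on $\Gamma_0$ cannot contain $\Gamma_0$, so the compact curves are cut out by $x_0$ together with the coordinates of classes $(-2,1)$ and $(1,-2)$ (the labels in the displayed equations of the statement appear to be permuted relative to the weight matrix and the intersection table in the paper's proof).
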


\begin{proof}
  By Lemma~\ref{lem:trivial_neighbourhoods}, the neighbourhood is
  trivial around each of the two curves $\Gamma_0$, $\Gamma_1$. It
  follows from this that we can find divisors $D_0, \dots, D_4$ on $X$ as follows:
  \begin{enumerate}[(i)]
  \item $D_0=S$;
  \item $D_0\cap D_1=\Gamma_1$
    scheme-theoretically. It follows from this that $D_1$ intersects
    $\Gamma_0$ transversally at the point where $\Gamma_0$ intersects
    $\Gamma_1$;
  \item $D_2$  intersects $\Gamma_0$ transversally at one
    point and is disjoint from $\Gamma_1$;
  \item  $D_3$  intersects $\Gamma_1$ transversally at
    one point and is disjoint from $\Gamma_0$;
  \item $D_0\cap D_4=\Gamma_0$ scheme-theoretically. It follows from this that
    $D_4$ intersects $\Gamma_1$ transversally at the point where $\Gamma_1$
    intersects $\Gamma_0$.
  \end{enumerate}
Note that the intersection multiplicities of these divisors with the
curves $\Gamma_0$ and $\Gamma_1$ are as follows:
\[
  \begin{array}{cccccc}
 &D_0 & D_1 & D_2 & D_3 & D_4 \\
\hline
 \cdot \Gamma_0  &  -a & 1 & 1 & 0 & -2 \\
 \cdot \Gamma_1  &  -b &-2& 0 & 1 & 1 \\
  \end{array}
\]
and so, by Lemma~\ref{lem:extremal_picard}, the main result of \cite{Cox95} gives that the divisors $D_0,\dots,D_4$ give a morphism from the extremal neighbourhood
to the model toric quotient neighbourhood given above, and it is easy to see that this morphism is an isomorphism.
\end{proof}
 
Next we describe extremal neighbourhoods $\Gamma \subset S\subset X$
in the case when $S$ is nonsingular outside a single ordinary node, where $X$ has a quotient singularity
of type $1/2(1,1,1)$, and $K_X\cdot \Gamma = -S\cdot \Gamma>0$.

\begin{lem}
  \label{lem:BadAntiflips}
  Let $\Gamma \subset S \subset X$ be a 3-dimensional extremal neighbourhood where 
  $\Gamma \cong \PP^1$ is a smooth rational curve, and $S \in |-K_X|$ is nonsingular outside a single ordinary double point $P\in S$, where $X$ has a quotient singularity of type $1/2(1,1,1)$. 

Assume that  $K_X\cdot \Gamma = -S\cdot \Gamma= k/2 >0$,  with $k\geq 3$ an odd integer. 

Then the antiflip $X\dasharrow  X^{-}$ exists, and $X^{-}$ has worse than terminal singularities. 
\end{lem}

\begin{proof}
  Consider the locally toric blow-up $f\colon (E\subset Y)\to (\Gamma \subset X)$ of the
  curve $\Gamma \subset X$ such that
  \begin{itemize}
      \item outside of $P\in X$, $f$ is the ordinary blow up of $\Gamma$;
      \item choosing coordinates $\left(0\in (x=y=0)\subset (x=0)\subset \frac1{2}(1,1,1) \right)$ near $\left(P\in \Gamma \subset S \subset X\right)$, $f$ is the toric blow-up with weights $(1,1,0)$ near $P\in X$.
  \end{itemize} 
  (We give an explicit model construction of this modification below.) Denote by $T\subset Y$ the strict
  transform of $S$, and set $\Delta = T \cap E$. We have that $T\in |-K_Y|$, and $f^{-1}(P)\subset Y$ is a curve of $A_1$-singularities. A computation shows that:
  \[
    E\cdot \Delta = -\frac{3}{2},
\quad \text{and} \quad
    T \cdot \Delta =-m+2, \; \text{where
      $m=\frac{k}{2}+\frac{1}{2}\geq 2$ is an integer.}
  \]
  (The computation can be done directly or using the model construction below.) 

\smallskip

\textsc{Claim.} \emph{Denote by ($\Delta \subset \widehat{Y}$) the extremal neighbourhood of $\Delta$ inside $Y$, that is, the germ of $Y$ around $\Delta$. If $m\geq 2$, the extremal neighbourhood
$(\Delta \subset  \widehat{Y})$ is analytically isomorphic to the analytic germ around the curve
  $(x_0=x_1=0)$ in the geometric quotient $\CC^4 /\!\!/\CC^\times$ for the action  given by the weights:
\[
  \begin{array}{cccc}
x_0 & x_1 & y_0 & y_1\\
\hline
    -2m+4 & -3 & 1 & 2\\
  \end{array} ,
\]
where the stability condition is $(>0)$.
Under this identification, $E=(x_1=0)$ and $T=(x_0=0)$.}

\smallskip

We first show that the claim implies the statement. The goal is to construct the antiflip $X\dasharrow X^{-}$ of the contraction $p\colon (\Gamma \subset X) \to (z\in Z)$. Write $\widetilde{p}=p\circ f\colon Y \to Z$. There are two cases to discuss: $m=2$ and $m>2$.

If $m>2$, then, by the claim, the curve $\Delta \subset Y$ is contractible, and hence there is a factorisation $\widetilde{p}=\pi \circ q$:
\[
\xymatrix{
 & (\Delta \subset Y) \ar[dl]_f\ar[dr]^q& \\
(\Gamma \subset X)\ar[dr]_p & & (w\in W)\ar[dl]^\pi \\
 & (z\in Z)& }
\]
where $\Delta = q^{-1}(w)$. Note that $\pi \colon W\to Z$ is projective and it contracts the image of $E$. We use the description of the neighbourhood $(\Delta \subset Y)$ given by the claim. To perform the antiflip 
\[
\xymatrix{
Y \ar[dr]_q\ar@{-->}[rr]& & Y^{-}\ar[dl]^{q^{-}} \\
  & W & 
}
\]
we change the stability condition to $(<0)$.
Denote by $T^{-}$ and $E^{-}\subset
Y^{-}$ the strict transforms of $T$ and $E$, respectively.
Then $T^{-}$ and $E^{-}$ are disjoint in $Y^{-}$, and the antiflip $X^{-}$ of the original $(\Gamma \subset X)$
is given by
\[
X^{-} =\Proj R(Y^{-}, T^{-}) = \underline{\Proj}_{\OO_Z} \Bigl(\bigoplus_{n\geq 0}(\pi \circ q^{-})_\star \OO_{Y^{-}}(nT^{-})\Bigr)\, .
\]
Indeed, first of all, $T^{-}\in |-K_{Y^{-}}|$ is nef and big (over $Z$) and hence, by the base point free theorem, $R(Y^{-}, T^{-})$ is finitely generated, $T^{-}$ is eventually free and defines a morphism $f^{-} \colon Y^{-}\to X^{-}$. 
The divisor $E^{-}$ is disjoint from $T^{-}$ and hence it is contracted to a point in $X^{-}$, and
 this point is a strictly canonical singularity. It is also clear that the birational map $X\dasharrow X^{-}$ is an isomorphism in codimension one, and that $S^{-}=f^{-}(T^{-})\sim -K_{X^{-}}$, hence $X^{-}\to Z$ really is the antiflip of $X\to Z$.  

If $m=2$, then $E\cong \PP^1\times \PP^1$, and $X^{-}$ is the contraction of $E^{-}\subset Y$ along the ruling other than the one contracted to $X$. The image of $E$ in $X^{-}$ is a curve of $A_2$-singularities, and so $X^{-}$ has worse than terminal singularities. 

It remains to prove the claim. Concretely, we construct the modification $f\colon (E\subset Y)\to (\Gamma \subset X)$ so that it fits into the following diagram: 
\[
\xymatrix{(\Delta_0\cup\Delta_1\subset F_1\cup E_1\cup T_1 \subset Y_1) \ar[r]^g\ar[d]_\sigma & (\Gamma_0\cup \Gamma_1 \subset F \cup S_1 \subset X_1)\ar[d]^\varepsilon  \\
(\Delta \subset T\cup E \subset Y)\ar[r]_f & (\Gamma \subset S \subset X).}\]
The maps in the diagram are defined as follows. 
\begin{itemize}
\item $\varepsilon \colon X_1\to X$ is the blow-up of the singular point $P\in X$. The exceptional divisor $F\subset X_1$ is isomorphic to $\PP^2$ and it has normal bundle $\OO(-2)$. We denote by $\Gamma_1, S_1$ the proper transforms of $\Gamma, S$ and set $\Gamma_0=F\cap S_1$. Note that 
 \[
    F\cdot \Gamma_1 = 1,
\quad \text{and} \quad
    S_1 \cdot \Gamma_1 =-m, \; \text{where
      $m=\frac{k}{2}+\frac{1}{2}\geq 2$ is an integer.}
  \]
\item $g\colon Y_1\to X_1$ is the blow up of $\Gamma_1$. The exceptional divisor $E_1\subset Y_1$ is isomorphic to the ruled surface $\FF_{m-2}$. We denote by $\Delta_0, \Delta_1, T_1, F_1$ the proper transforms of $\Gamma_0,\Gamma_1,S,F$. Note that $F_1$ is isomorphic to the ruled surface $\FF_1$. We have
\[
    T_1\cdot \Delta_1 = -(m-2) \,.
  \]
  \item $\sigma\colon Y_1\to Y$ is the contraction of the ruled surface $F_1$ to a curve of $A_1$-singularities in $Y$. It is now easy to compute, as claimed:
  \[
  E\cdot \Delta = \bigl(E_1+\frac1{2} F_1\bigr)\cdot \Delta_1 = -2+\frac1{2}=-\frac{3}{2}
  \]
  and
  \[
  T\cdot \Delta = T_1\cdot \Delta_1=-(m-2) \,.
  \]
\end{itemize}
By Lemma~\ref{lem:A2-nbds}, the extremal neighbourhood  $(\Delta_0\cup \Delta_1\subset Y_1)$ is isomorphic to the analytic germ around the curve
  \[
    \Delta_0\cup\Delta_1=(x_0^\prime = z=0)\cup(x_0^\prime=x_1^\prime=0)
  \]
in the geometric quotient $\mathbb{Y}_1=\CC^5 /\!\!/\CC^\times$ for the action given by the weights:
\[
  \begin{array}{ccccc}
x_0^\prime & x_1^\prime & y_0^\prime & y_1^\prime & z \\
\hline
    0& 1 & 1 & 0 & -2 \\
    -(m-2) &-2& 0 & 1 & 1 \\
  \end{array},
\]
where the stability condition is taken in the quadrant
$\langle (1, 0), (0,1) \rangle_+$.
Under this identification, $T_1=(x_0^\prime=0)$, $E_1=(x_1^\prime=0)$ and $F_1=(z=0)$.

To prove the claim, we need to show that the extremal neighbourhood  $(\Delta \subset Y)$ is analytically isomorphic to the analytic germ around the curve
  $(x_0=x_1=0)$ in the geometric quotient $\mathbb{Y}=\CC^4 /\!\!/\CC^\times$ for the action given by the weights:
\[
  \begin{array}{cccc}
x_0 & x_1 & y_0 & y_1\\
\hline
    -2(m-2) & -3 & 1 & 2\\
  \end{array} ,
\]
where the stability condition is $(>0)$.
Indeed, $\sigma\colon {Y}_1\to {Y}$ is the contraction of $F_1$ and it is induced by the morphism $\widetilde{\sigma}\colon \mathbb{Y}_1\to \mathbb{Y}$ given by 
\begin{equation}
\label{eq:proj}
(x_0,x_1,y_0,y_1) \mapsto (x_0^\prime,x_1^\prime \sqrt{z}, y_0^\prime \sqrt{z}, y_1^\prime) \,.
\end{equation}
For sake of clarity, we spell out in detail how Equation~\ref{eq:proj} gives a morphism $\widetilde{\sigma}\colon \mathbb{Y}_1\to \mathbb{Y}$. 

The ambient space $\mathbb{Y}$ is not smooth: it has a quotient singularity $\frac1{2}(0,1,1)$ at the origin of the chart $(y_1=1)$. The sheaf $\OO_{\mathbb{Y}}(1)$ is not a line bundle on $\mathbb{Y}$ (it is a $\QQ$-line bundle), which explains some of the strange square roots in Equation~\ref{eq:proj}. However, $A=\OO_{\mathbb{Y}}(2)$ is an ample line bundle on $\mathbb{Y}$ and
\[
\mathbb{Y} = \Proj R(\mathbb{Y}, A) \quad \text{where} \quad R(\mathbb{Y, A})=\oplus_{n\geq 0} H^0(\mathbb{Y},nA).
\]
Similarly, $B=\OO_{\mathbb{Y}_1} \begin{pmatrix} 0\\1\end{pmatrix}$ is a (nef) line bundle on $\mathbb{Y}_1$. Equation~\ref{eq:proj} gives a morphism $\widetilde{\sigma}\colon \mathbb{Y}_1\to \mathbb{Y}$ such that $\widetilde{\sigma}^\star (A) = B$ where $\widetilde{\sigma}^\star \colon H^0(\mathbb{Y},A) \to H^0(\mathbb{Y}_1,B)$ is computed from Equation~\ref{eq:proj}:
\begin{multline*} 
\big(y_0^2,\; y_1,\; x_1y_0^5,\; x_1y_0^3y_1, \; x_1y_0y_1^2, \; \dots,\; x_0 y_0^{2m-2},\;\dots \big) \quad \mapsto \\ \mapsto \quad 
 \big( (y_0^\prime)^2z, \; y_1^\prime, \; x_1^\prime (y_0^\prime)^5 z^3, \; x_1^\prime (y_0^\prime)^3y_1^\prime z^2, \; x_1^\prime y_0^\prime (y_1^\prime)^2z, \; \dots, \; x_0^\prime (y_0^\prime)^{2m-2}z^{m-1}, \; \dots \big) \;.
\end{multline*}
\end{proof}

\subsection{Fano and weak Fano $\PP^1$-bundles over $\PP^2$}
\label{sec:fano-weak-fano}

We state the classification of rank~two vector bundles $\cE$ on
$\PP^2$ such that $\PP(\cE)$ is Fano or weak Fano. We also collect some
elementary facts and formulas on projective space bundles for later
use. 

We begin by clarifying our conventions regarding vector bundles. 
In this section, a vector bundle on a scheme is a locally free sheaf on
it. Following Grothendieck, if $\cE$ is a vector bundle on $Y$ we denote by
\[
\PP(\cE)=\uProj_{\O_X} \bigoplus_{n\in \NN} \Sym^n \cE
\]
the space of $1$-dimensional \emph{quotients} of $\cE$, by
\[
  \pi \colon \PP(\cE)\to Y
 \]
the natural projection, by $\O_{\PP(\cE)} (1)$ --- or simply $\OO(1)$ --- the tautological line bundle on
$\PP(\cE)$, so that
\[
  \cE = \pi_* \O(1),
\]
and by $\xi =c_1 \left( \O(1)\right)\in
A^1\left(\PP(\cE)\right)$ its first Chern class. 
The Chow ring of $\PP(\cE)$ admits the following description:
\[
A^\bullet \left(\PP(\cE)\right) =
\frac{A^\bullet (Y)[\xi]}{(\xi^r+\sum_{i=1}^r(-1)^i\xi^{r-i}c_i(\cE))} \ , 
\]
where $r=\rk \cE$, and the formula defines the $i$-th \emph{Chern class} $c_i(\cE)\in A^i(Y)$. 
The \emph{Chern polynomial} of $\cE$ is the polynomial
\[
c_t (\cE)=1+\sum_{i=1}^r c_i(\cE) t^i.
\]
\medskip

In what follows, $\cE$ is a rank~two vector bundle on $\PP^2$. We denote by $\ell\in A^1(\PP^2)$ the
class of a line and we abuse notation slightly and write the first
and second Chern classes of $\cE$ as
\[
c_1 \ell, \quad c_2 \ell^2, \quad \text{where $c_1,c_2\in \ZZ$.}
\]
We say that $\cE$ is \emph{normalised} if $c_1\in \{0,-1\}$. We can
always achieve this by tensoring $\cE$ with a line bundle.

\begin{thm}
  \label{thm:weakFanoVectorBundles}
  Let $\cE$ be a normalised rank~two vector bundle on $\PP^2$. Then
  \begin{enumerate}[(1)]
  \item $\PP(\cE)$ is Fano if and only if $\cE$ is one of the bundles
    in List~$1$ below~\cite{MM81,SW90}.
  \item $\PP(\cE)$ is strictly weak Fano --- i.e., $-K_{\PP(E)}$ is nef
    and big but not ample --- if and only if $\cE$ is one of the bundles
    in List~$2$ below~\cite[Theorem B]{Ya12}, \cite[Theorem 3.4]{JPR05}.
  \end{enumerate}
\end{thm}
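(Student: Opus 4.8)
The plan is to run the standard Mori-theoretic analysis of $\PP^1$-bundles over $\PP^2$, reproducing the classifications of~\cite{MM81, SW90} for part~(1) and of~\cite{Ya12, JPR05} for part~(2). First I would normalise: since $\PP(\cE)\cong \PP(\cE\otimes \cL)$ for every line bundle $\cL$, and ampleness, bigness and nefness of the anticanonical divisor are unaffected by such a twist, we lose nothing by assuming $c_1\in\{0,-1\}$, so the data reduces to the single integer $c_2$ together with the isomorphism type of $\cE$ among bundles with these invariants.

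Next I would set up the intersection numbers on $X=\PP(\cE)$. Writing $h=\pi^*\ell$ and using the relation $\xi^2=c_1 h\xi - c_2 h^2$ coming from the presentation of $A^\bullet(X)$ recalled above, one finds $\xi^3=c_1^2-c_2$, $\xi^2\cdot h=c_1$, $\xi\cdot h^2=1$, $h^3=0$, and $-K_X=2\xi+(3-c_1)h$. Since $X$ is smooth with $\rho(X)=2$, the cone $\NEbar(X)$ has exactly two extremal rays: that of the fibre class $f=h^2$ of $\pi$, on which $-K_X\cdot f=2>0$ unconditionally, and a second ray $R$. Dually $\Nef(X)=\langle h,\ N\rangle_+$ with $N=\xi+\tau h$, where $\tau=\tau(\cE)$ is the nef threshold --- the least $\tau$ such that $\xi+\tau h$ is nef --- and the second extremal contraction $\varphi_R\colon X\to Z$ is the one defined by $|mN|$ for $m\gg 0$. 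In these terms, the Fano condition is $-K_X\cdot R>0$, the strictly-weak-Fano condition is $-K_X\cdot R=0$ together with $(-K_X)^3>0$, and the whole problem is reduced to computing $\tau$ and the class of a generator of $R$ in terms of $c_1,c_2$ and the positivity of $\cE$.

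The heart of the matter is the case division by the Harder--Narasimhan type of $\cE$. If $\cE$ is nef then $\xi$ is nef, $\tau\le 0$, and $\varphi_R$ is the morphism attached to $\cO(1)$, whose type is read off from the global generation and ampleness of $\cE$. If $\cE$ is unstable, a maximal destabilising sub-line-bundle $\cO(a)\hookrightarrow\cE$ with $2a>c_1$ produces a section of $\pi$ of $\cO(1)$-degree $c_1-a<0$; this pins down $\tau$ and the class of $R$, and imposing $-K_X\cdot R>0$ (resp.\ $=0$) together with the non-negativity of the length $c_2-a(c_1-a)$ of the degeneracy locus of the quotient leaves only finitely many $(a,c_1,c_2)$. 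If $\cE$ is semistable, the Bogomolov inequality $c_1^2\le 4c_2$ and Riemann--Roch estimates for $h^0\bigl(\cE(m)\bigr)$ with $m$ small again bound $c_2$. For each surviving pair $(c_1,c_2)$ I would exhibit an explicit bundle realising it --- a split bundle $\cO(a)\oplus\cO(b)$, a twisted (co)tangent bundle $T_{\PP^2}(-k)$, or a non-split stable bundle --- and verify by a direct computation of $-K_X\cdot R$ and $(-K_X)^3$ that the associated threefold is Fano (resp.\ strictly weak Fano), settling uniqueness up to isomorphism by an $\Ext^1$ or moduli argument whenever two bundles share the same invariants.

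The main obstacle is the last step in the unstable and semistable ranges: controlling the non-split bundles. Bounding $c_2$ via Bogomolov and Riemann--Roch is routine, but for each admissible pair of invariants on the boundary of the range one still has to decide whether a bundle with those invariants exists, and whether \emph{every} such bundle yields a Fano (resp.\ weak Fano) threefold --- both issues being sensitive to the fine structure of the moduli of stable bundles on $\PP^2$. This is exactly where the detailed arguments of~\cite{SW90} and of~\cite{Ya12, JPR05} enter, and I would invoke them to complete the two lists rather than redo the analysis.
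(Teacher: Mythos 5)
The paper gives no proof of this theorem: it is quoted from the literature, with part~(1) attributed to \cite{MM81,SW90} and part~(2) to \cite{Ya12,JPR05}. Your sketch reproduces exactly the strategy of those references --- normalise $\cE$, compute the intersection ring and $-K_{\PP(\cE)}=2\xi+(3-c_1)h$, study the second extremal ray according to the Harder--Narasimhan type of $\cE$, bound $c_2$ via Bogomolov and Riemann--Roch, and handle the stable bundles through their moduli --- and you then defer the delicate existence/uniqueness questions for stable bundles to the very same sources the paper cites, so your treatment is consistent with (indeed more detailed than) the paper's, and the acknowledged incompleteness in the stable range is not a gap relative to how the result is used here.
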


\noindent {\bf List~$1$. Rank~$2$ vector bundles $\cE$ on $\P^2$ with $c_1(\cE)\in \{0,-1\}$ such that $\mathbb{P}(\cE)$ is Fano}
\begin{enumerate}
\item $\mathcal{O}_{\P^2}\oplus \mathcal{O}_{\P^2}(-1), \ c_1=-1, c_2 =0$;
\item $\mathcal{O}_{\P^2}(1)\oplus \mathcal{O}_{\P^2}(-1), \ c_1=0, c_2=-1$;
\item $\mathcal{O}_{\P^2}\oplus \mathcal{O}_{\P^2}, \ c_1=c_2=0$;
\item $T_{\P^2}(-2), \ c_1 = -1, c_2 = 1$;
\item $\mathcal{E}$ is determined by the exact sequence $0\ \to \mathcal{O}_{\P^2}\ \to \mathcal{E}\ \to \mathcal{I}_p\ \to 0$, where $\mathcal{I}_p$ is the ideal sheaf of a point $p\in\mathbb{P}^2$, $c_1 = 0, c_2 = 1$ (see Remark~\ref{bunseq} below);
\item $\mathcal{E}$ is a stable bundle with $c_1 = 0, c_2=2$;
\item $\mathcal{E}$ is a stable bundle with $c_1 = 0, c_2=3$.
\end{enumerate}

\medskip

\noindent {\bf List~$2$. Rank~$2$ vector bundles $\cE$ on $\P^2$ with $c_1(\cE)\in \{0,-1\}$ such that $\mathbb{P}(\cE)$ is strictly weak Fano}

\begin{enumerate} \setcounter{enumi}7
\item $\mathcal{O}_{\P^2}(1)\oplus\mathcal{O}_{\P^2}(-2)$, $c_1 = -1, c_2 = -2$;
\item $\mathcal{E}$ is determined by the exact sequence $0\ \to \mathcal{O}_{\P^2}\ \to \mathcal{E}\ \to \mathcal{I}_p(-1)\ \to 0$, where $\mathcal{I}_p$ is the ideal sheaf of a point $p\in\mathbb{P}^2$, $c_1 = -1, c_2 = 1$ (see Remark~\ref{bunseq} below);
\item $\mathcal{E}$ is a stable bundle with $c_1 = -1, 2\leq c_2\leq 5$;
\item $\mathcal{E}$ is a stable bundle with $c_1 = 0, 4\leq c_2\leq 6$.
\end{enumerate}

\begin{Remark}\label{bunseq}
  One may compute the Chern classes of the vector bundles (5) and (9)
  above as follows. For $k\leq 2$, let $\mathcal{E}$ be the unique
  vector bundle on $\mathbb{P}^2$ that sits in the exact
  sequence\footnote{A simple computation shows that
\[
\text{for all $k\in \ZZ$,}\quad \Ext^1_{\O_{\PP^2}}
(\cI_p(k),\O_{\PP^2}) =
\begin{cases}
  \CC \;& \text{if $k \leq 2$}; \\
  0 \; & \text{if $k\geq 3$};
\end{cases} \ ,
\]
and that the nontrivial extensions are also locally
nontrivial.}
\[
0\ \to \mathcal{O}_{\P^2}\ \to \mathcal{E}\ \to \mathcal{I}_p(k)\ \to 0 \ .
\]
By multiplicativity of Chern polynomials, we have
$c_t(\mathcal{E}) = c_t(\mathcal{I}_p(k))$.  The Koszul complex
twisted by $\mathcal{O}_{\P^2}(k)$,
\[
0\rightarrow \mathcal{O}_{\P^2}(-2+k)\rightarrow
\mathcal{O}_{\P^2}(-1+k)^{\oplus 2}\rightarrow
\mathcal{I}_p(k)\rightarrow 0,
\]
yields that $c_t(\mathcal{I}_p(k))(1+(-2+k)t) =
(1+(-1+k)t)^2$, and from this we conclude that $c_1(\mathcal{E}) = k$ and
$c_2(\mathcal{E}) = 1$.
\end{Remark}

\begin{lem}
  \label{lem:canonical_in_PP(E)}
  Let $\cE$ be a rank $r+1$ vector bundle on $Y$. Then the
  anticanonical class of $\PP(\cE)$ is
  \[
-K_{\PP(\cE)} = (r+1)\xi -\pi^* \bigl(c_1(\cE)+K_Y\bigr) \ .
   \]
 \end{lem}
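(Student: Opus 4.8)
The plan is to reduce the statement to a determinant computation via the relative Euler sequence. First I would record that, since $\pi\colon\PP(\cE)\to Y$ is a smooth projective morphism, the relative cotangent sequence $0\to\pi^*\Omega^1_Y\to\Omega^1_{\PP(\cE)}\to\Omega^1_{\PP(\cE)/Y}\to 0$ is exact, so taking determinants yields $K_{\PP(\cE)}=\pi^*K_Y+K_{\PP(\cE)/Y}$ with $K_{\PP(\cE)/Y}=\det\Omega^1_{\PP(\cE)/Y}$. Thus it suffices to compute the first Chern class of the rank-$r$ bundle $\Omega^1_{\PP(\cE)/Y}$.

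For this I would use the relative Euler sequence associated to the tautological quotient $\pi^*\cE\twoheadrightarrow\O_{\PP(\cE)}(1)$: twisting this surjection by $\O_{\PP(\cE)}(-1)$ gives a surjection $\pi^*\cE\otimes\O_{\PP(\cE)}(-1)\twoheadrightarrow\O_{\PP(\cE)}$ whose kernel is, by the standard description, $\Omega^1_{\PP(\cE)/Y}$, i.e.\ the exact sequence
\[
0\longrightarrow \Omega^1_{\PP(\cE)/Y}\longrightarrow \pi^*\cE\otimes\O_{\PP(\cE)}(-1)\longrightarrow \O_{\PP(\cE)}\longrightarrow 0 .
\]
Taking first Chern classes and using their additivity in short exact sequences, the middle term has rank $r+1$ and $c_1=\pi^*c_1(\cE)-(r+1)\xi$, while the right-hand term has $c_1=0$; hence $c_1\bigl(\Omega^1_{\PP(\cE)/Y}\bigr)=\pi^*c_1(\cE)-(r+1)\xi$, that is, $K_{\PP(\cE)/Y}=\pi^*c_1(\cE)-(r+1)\xi$.

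Substituting into the first identity gives
\[
-K_{\PP(\cE)}=-\pi^*K_Y-K_{\PP(\cE)/Y}=(r+1)\xi-\pi^*\bigl(c_1(\cE)+K_Y\bigr),
\]
as claimed. I do not expect any genuine difficulty here; the only thing to watch is the sign/duality bookkeeping dictated by the conventions fixed above --- namely that $\O_{\PP(\cE)}(1)$ is Grothendieck's tautological \emph{quotient} line bundle (so $\pi_*\O_{\PP(\cE)}(1)=\cE$), which is precisely what makes the Euler sequence involve $\pi^*\cE(-1)$ and produces the minus sign in front of $c_1(\cE)$. As an alternative one could read $K_{\PP(\cE)/Y}$ off the presentation of the Chow ring $A^\bullet(\PP(\cE))$ recalled earlier in this section, but the Euler-sequence computation is the cleanest route.
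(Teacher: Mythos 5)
Your proof is correct and is essentially the paper's own argument: the paper cites the relative Euler sequence $0 \to \O_{\PP(\cE)} \to (\pi^*\cE^\vee)\otimes\O_{\PP(\cE)}(1) \to T_\pi \to 0$ together with $0\to T_\pi \to T_{\PP(\cE)} \to \pi^*T_Y \to 0$, which are precisely the duals of the two sequences you use, and the Chern class bookkeeping is identical. The only difference is that you work with cotangent rather than tangent bundles, which is immaterial.
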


 \begin{proof}
   This follows from the Euler sequence computing the relative tangent
   bundle:
\[
0 \to \O_{\PP(\cE)} \to \bigl(\pi^* \cE^\vee\bigr)\otimes
\O_{\PP(\cE)}(1) \to T_{\pi} \to 0 \ ,
\]
and the exact sequence $0 \to T_\pi \to T_{\PP(\cE)} \to \pi^* T_Y \to 0$. 
\end{proof}

\section{Proof of Theorem~B}\label{section:A1}

In this section we prove Theorem~B. Consider a Mf
CY pair $(\PP^3,D)$, where $D\subset \PP^3$ is a quartic surface that
is nonsingular outside a unique singular point $z\in D$ of type
$A_1$, and such that the class group $\Cl(D)\cong \mathbb{Z}$ is
generated by the class of a hyperplane section.  In this case,
birational rigidity fails. The following Mf CY pair
$(X, D_X)\to \P^2$ is a nontrivial element in $\mathcal{P}(\P^3,D)$.
Let $\sigma\colon X\rightarrow\mathbb{P}^3$ be the blow-up of $z$, and
$D_X\subset X$ the strict transform of $D$.  It is a smooth $K3$
surface.  The projection $\P^3\map \P^2$ from $z$ induces a Mori
fibration $\pi\colon X\to \P^2$.  One easily computes that
\[
K_X  +  D_X \ = \ \sigma^*(K_{\mathbb{P}^3} + D).
\]
Theorem~B states that the Mf CY pair $(X, D_X)\to \P^2$ is
the only nontrivial element in the pliability set
$\mathcal{P}(\P^3,D)$.

\medskip

In the course of the proof, we will need the following result.

\begin{Proposition}
  \label{pic}
  Let $Y$ be a smooth projective variety of dimension $n$, $A$ a
  smooth semi-ample divisor on $Y$, and
  $\varphi\colon Y\rightarrow\mathbb{P}^N$ the morphism induced by the
  linear system $|mA|$ for $m\gg 0$.  Suppose that:
  \begin{enumerate}[(i)]
  \item the generic fibre dimension of $\varphi$ is $\leq n-3$;
  \item for all $x\in \mathbb{P}^N$, $\dim\big(\varphi^{-1}(x)\big)\leq n-2$.
  \end{enumerate}
  Then the cokernel of the restriction
  homomorphism $H^2(Y;\ZZ)\rightarrow H^2({A};\ZZ)$ is torsion-free.
\end{Proposition}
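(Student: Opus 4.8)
The plan is to obtain the statement from a Lefschetz-type connectivity result for the pair $(Y,A)$, exploiting that, up to replacing $A$ by a general member of the base-point-free system $|mA|$, the divisor $A$ is the preimage under $\varphi$ of a general hyperplane.

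First I would make a purely topological reduction. From the long exact cohomology sequence of the pair $(Y,A)$,
\[
H^2(Y;\ZZ)\ \xrightarrow{\ \rho\ }\ H^2(A;\ZZ)\ \xrightarrow{\ \partial\ }\ H^3(Y,A;\ZZ)\ \longrightarrow\ H^3(Y;\ZZ),
\]
the cokernel of the restriction map $\rho$ is $\partial\bigl(H^2(A;\ZZ)\bigr)=\ker\bigl(H^3(Y,A;\ZZ)\to H^3(Y;\ZZ)\bigr)$, a subgroup of $H^3(Y,A;\ZZ)$; hence it is enough to show that $H^3(Y,A;\ZZ)$ is torsion-free. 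Since $Y$ and $A$ are compact, all the groups in sight are finitely generated, and by the universal coefficient theorem the torsion subgroup of $H^3(Y,A;\ZZ)$ is isomorphic to that of $H_2(Y,A;\ZZ)$. So it suffices to prove that $H_2(Y,A;\ZZ)$ is torsion-free, and for this it is in turn enough to prove the (stronger, cleaner) statement that $H_i(Y,A;\ZZ)=0$ for $i\le 2$, as would follow from $2$-connectedness of the pair.

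At this point I would invoke the geometry. By Bertini a general member of $|mA|$ is a smooth divisor of the form $A_0=\varphi^{-1}(H)$ for $H\subset\PP^N$ a general hyperplane, and the vanishing $H_i(Y,A_0;\ZZ)=0$ for $i\le 2$ is an instance of the Lefschetz hyperplane theorem for the morphism $\varphi$ (Goresky--MacPherson; see also Hamm--L\^e, and the exposition in Lazarsfeld's \emph{Positivity in Algebraic Geometry}): for $Y$ smooth of dimension $n$, the pair $\bigl(Y,\varphi^{-1}(H)\bigr)$ is $c$-connected, where $c$ is controlled by $\dim\varphi(Y)$ and by the dimensions of the loci in $\PP^N$ over which the fibre dimension of $\varphi$ jumps. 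Hypothesis~(i) is exactly the statement $\dim\varphi(Y)\ge 3$, and hypothesis~(ii) is exactly the statement that every fibre of $\varphi$ has codimension $\ge 2$ in $Y$; I expect that these two conditions are precisely what force $c\ge 2$, and that the (mild) singularities of the image $\varphi(Y)$ cause no trouble because $Y$ itself is smooth.

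The main obstacle is this last step: fixing the precise form of the Goresky--MacPherson connectivity estimate and checking, through the numerology of fibre dimensions, that (i) and (ii) really do yield $c\ge 2$; the tight case is $\dim\varphi(Y)=3$, where already the \emph{generic} fibre of $\varphi$ has the largest dimension $n-3$ allowed by~(i). A second, more routine point is the passage from the fixed smooth $A$ in the statement to the general member $A_0$ of $|mA|$: I would either argue that the conclusion is insensitive to this replacement, or — as is likely closer to how the result is used — simply apply the argument to the relevant general member directly. Granting that $H_i(Y,A;\ZZ)=0$ for $i\le2$, the exact sequence and universal coefficient computation of the first paragraph finish the proof.
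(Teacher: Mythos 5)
Your reduction is sound as far as it goes: the cokernel embeds in $H^3(Y,A;\ZZ)$, whose torsion agrees with that of $H_2(Y,A;\ZZ)$, and the numerology you hope for does come out right — with $\phi(k)$ the dimension of the locus of image points with fibre dimension $k$, hypotheses (i) and (ii) give exactly $2k+\phi(k)+\inf\{\phi(k),0\}\leq 2n-3$ for all $k$, which is precisely how the paper feeds the data into Goresky--MacPherson \cite{GM88}. The genuine gap is the step you call routine: the passage from the fixed divisor $A$ to a general member $A_0\in|mA|$. The generic-hyperplane Lefschetz theorem only gives connectivity of $\bigl(Y,\varphi^{-1}(H)\bigr)$ for $H$ general, i.e.\ for $A_0$ a \emph{general} member of $|mA|$ with $m\gg 0$; but $A_0$ is a different surface from $A$ (it is not even linearly equivalent to $A$), so torsion-freeness of the cokernel for $A_0$ says nothing about the cokernel for $A$. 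Nor can one "apply the argument to the relevant general member directly": in the applications (the proofs of Theorems~B and~C) the statement is needed for a specific anticanonical K3 surface $D_{X'}$, not for a general member of $|-mK_{X'}|$. So neither of your two proposed ways of closing this step works as stated.

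The repair is to avoid the generic relative Lefschetz theorem altogether, and it is exactly what the paper does. Since $\varphi$ is defined by $|mA|$, there is a \emph{specific} hyperplane $H_0$ with $\varphi^*H_0=mA$, so $\mathcal{U}=Y\setminus A=\varphi^{-1}(\PP^N\setminus H_0)$ is proper over an affine variety — no genericity is needed for that. The "homotopy dimension with large fibres" theorem of \cite{GM88} then bounds the homotopy dimension of $\mathcal{U}$ by $2n-3$, using (i) for the generic fibre and (ii) for the special ones. From this your relative statement for the given $A$ follows by duality on the smooth compact $Y$: $H_i(Y,A;\ZZ)\cong H^{2n-i}(\mathcal{U};\ZZ)=0$ for $i\leq 2$. (The paper phrases the same argument with the compactly supported sequence $H^2(Y)\to H^2(A)\to H^3_c(\mathcal{U})\cong H_{2n-3}(\mathcal{U})$ and the torsion-freeness of top-degree homology, citing \cite{Na67}.) With that substitution — complement plus duality in place of the generic-hyperplane theorem plus the illegitimate replacement of $A$ — your outline becomes the paper's proof.
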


\begin{proof}
  Set $\mathcal{U} = Y\setminus {A}$, and denote by
  $i\colon \mathcal{U}\rightarrow Y$ and $j\colon A\rightarrow Y$ the
  inclusions.  Consider the long exact sequence
$$
\dots \ \rightarrow H^{2}_c(\mathcal{U},\ZZ)\
\xrightarrow{\textit{i}_*} \ H^2(Y,\ZZ)\ \xrightarrow
{\textit{j}^*}\  H^2({A},\ZZ)\ \rightarrow \ H^3_c(\mathcal{U},\ZZ)\ \rightarrow \ \dots \ 
$$
where $H^i_c(\mathcal{U},\ZZ)$ denotes the singular cohomology with
compact support. It is enough to show that
$H^3_c(\mathcal{U},\ZZ)$ is torsion free. By Poincar\'e duality,
$H^3_c(\mathcal{U},\ZZ)\cong H_{2n-3}(\mathcal{U},\ZZ)$.

Let $H\subset\mathbb{P}^N$ be the hyperplane such that
$\varphi^{*} H = m{A}$.  Then $\varphi$ restricts to a proper
morphism
$\varphi_{\mathcal{U}}\colon
\mathcal{U}\rightarrow\mathbb{P}^N\setminus H$.
For each integer $k$, we denote by $\phi(k)$ be the dimension of the
set of points $y\in \varphi(\mathcal{U})$ such that
$\dim\big(\varphi^{-1}(y)\big)=k$. If this set is empty, we set
$\phi(k) = -\infty$.  By the main result of~\cite[Part~II, Chap.~1,
Sec.~1.1$\star$]{GM88} (Homotopy Dimension with Large Fibres),
$\mathcal{U}$ has the homotopy type of a CW complex of real dimension
less than or equal to
\[
  n+\sup_{k}\big\{2k-n+\phi(k)+\inf\{\phi(k),0\}\big\}\ \leq \ 2n-3.
\]
--- where the last inequality follows from the assumptions~(i) and
~(ii). Finally, by \cite[Theorem 3]{Na67},
$H_{2n-3}(\mathcal{U},\ZZ)$ is torsion free.
\end{proof}

We are now ready to proceed with the proof of Theorem~B. We start by collecting a few useful facts on the
geometry of $D_X$. 

\begin{say}[The geometry of
  $D_X$] 
\label{mc1}\label{reminv}\label{autD} 
The  surface $D_X\subset X$  intersects the exceptional divisor
  $E\cong \P^2$ of $\sigma\colon X\rightarrow\mathbb{P}^3$ transversely along a smooth conic $e$.  Denote
  by $h$ the pull-back of a general hyperplane under
  $\sigma_{|D_X}\colon D_X\to \mathbb{P}^3$.  Then
  $\Pic(D_X)=\mathbb{Z}[h]\oplus \mathbb{Z}[e]$, and the intersection
  matrix of $\Pic(D_X)$ with respect to the basis $\big( [h],[e]\big)$
  is
\begin{equation}\label{intersection_matrix}
\left(\begin{array}{cc}
4 & 0 \\
0 & -2
\end{array}\right).
\end{equation} 
The condition that $\Cl(D)\cong \mathbb{Z}$ is generated by the class
of a hyperplane section guarantees that $D$ contains no lines, and so
the restriction $\pi_{|D_X}\colon D_X\to \mathbb{P}^2$ is finite of
degree $2$. It ramifies over a sextic curve.
 The associated involution $\tau\colon D_X\to D_X$ maps
the $(-2)$-curve $e$ to another $(-2)$-curve $e'$, the strict
transform of the intersection of $D$ with its tangent cone at
$z$. Note that $\NE(D_{X}) = \left\langle [e],[e']\right\rangle_+$, and
$e+e'\sim (\pi_{|D_X})^*\O_{\P^2}(2)\sim 2h-2e$. Thus, $e'\sim 2h-3e$.

\begin{lem}
  \label{lem:bir_k3}
 With the notation and assumptions of the preceding
 discussion~\ref{mc1}, we have
$$
\Bir(D_X) = G_D \rtimes \langle \tau\rangle,
$$
where $G_D\subset \PGL_4(\CC)$ is the group of projective automorphisms of
$D\subset \PP^3$.
\end{lem}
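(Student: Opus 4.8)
}

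Since $D_X$ is a smooth $K3$ surface, it is a minimal surface of Kodaira dimension zero, so every birational self-map of $D_X$ is an automorphism; thus $\Bir(D_X)=\Aut(D_X)$ and it suffices to compute the latter. The plan is to analyse the natural representation
\[
\rho\colon\Aut(D_X)\longrightarrow O\bigl(\Pic(D_X)\bigr)
\]
on $\Pic(D_X)=\ZZ[h]\oplus\ZZ[e]$, whose Gram matrix is $\operatorname{diag}(4,-2)$ as recorded in~\eqref{intersection_matrix}, and then to show separately that (a) the image of $\rho$ is the order-two group $\{\operatorname{id},\tau^*\}$, and (b) the kernel of $\rho$ equals $G_D$.

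For step (a): every $g\in\Aut(D_X)$ preserves $\NE(D_X)=\langle[e],[e']\rangle_+$ with $e'\sim2h-3e$, so $\rho(g)$ permutes the two extremal rays $\RR_{\geq0}[e]$ and $\RR_{\geq0}[e']$. First I would note that an isometry fixing both rays is the identity: it scales $e$, hence $e'$, by a positive real which must equal $1$ because the isometry preserves $e^2$ and $e'^2$, and $e,e'$ span $\Pic(D_X)\otimes\QQ$. Next, since $e^2=e'^2=-2$ and $e$, $e'$ are primitive, there is at most one isometry interchanging $e\leftrightarrow e'$; such an isometry necessarily fixes $e+e'=2(h-e)$, hence fixes $h-e=\pi_{|D_X}^*\OO_{\PP^2}(1)$ and sends $h\mapsto3h-4e$, and therefore coincides with $\tau^*$. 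Hence $\operatorname{im}\rho\subseteq\{\operatorname{id},\tau^*\}$, with equality since $\tau\in\Aut(D_X)$. As $\tau^2=\operatorname{id}$, the subgroup $\langle\tau\rangle$ splits the resulting short exact sequence
\[
1\longrightarrow\ker\rho\longrightarrow\Aut(D_X)\xrightarrow{\ \rho\ }\ZZ/2\ZZ\longrightarrow1.
\]

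For step (b), the inclusion $G_D\subseteq\ker\rho$ is the routine direction: a projective automorphism of $D\subset\PP^3$ fixes its unique singular point $z$, hence lifts to an automorphism of $X=\Bl_z\PP^3$ stabilising $D_X$ and the exceptional divisor $E$; restricting to $D_X$ gives an automorphism fixing $[h]$ and $[e]=[E\cap D_X]$, i.e.\ an element of $\ker\rho$, and the assignment is injective because $D$ is non-degenerate in $\PP^3$. For the reverse inclusion I would take $g\in\ker\rho$, so $g^*[h]=[h]$, and exploit that $\sigma_{|D_X}\colon D_X\to D$ is the minimal resolution with $h=(\sigma_{|D_X})^*\OO_D(1)$: then $H^0\bigl(D_X,\OO_{D_X}(h)\bigr)=H^0\bigl(\PP^3,\OO_{\PP^3}(1)\bigr)$, and the morphism attached to $|h|$ is exactly $\sigma_{|D_X}$ followed by $D\hookrightarrow\PP^3$. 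Consequently $g$ induces $\bar g\in\PGL_4(\CC)$ with $\bar g(D)=D$, i.e.\ $\bar g\in G_D$, and $g$-equivariance of the birational morphism $\sigma_{|D_X}$ forces the lift of $\bar g$ to $D_X$ to be $g$ itself. This yields $\ker\rho=G_D$, and hence $\Aut(D_X)=\Bir(D_X)=G_D\rtimes\langle\tau\rangle$.

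The only step carrying genuine content is the reverse inclusion in (b) — reconstructing a projective transformation of $\PP^3$ from an automorphism of the $K3$ surface that fixes the hyperplane class; everything else is the two-dimensional lattice bookkeeping of (a) together with the standard fact that $\Bir=\Aut$ for $K3$ surfaces.
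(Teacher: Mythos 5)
Your proposal is correct and follows essentially the same route as the paper: reduce to $\Aut(D_X)$ via minimality of the K3, use the action on $\Pic(D_X)$ and the Mori cone $\langle[e],[e']\rangle_+$ to get a split surjection onto $\ZZ/2\ZZ$ realized by $\tau$, and identify the kernel with $G_D$ through the linear system $|h|=|\OO_D(1)|$. You simply spell out more carefully the two details the paper leaves implicit (that an isometry preserving both extremal rays is trivial on $\Pic(D_X)$, and both inclusions in $\ker\rho=G_D$), which is fine.
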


\begin{proof}
  Since $D_X$ is the unique minimal model of $D$, it is follows that
  $\Bir(D)=\Aut (D_X)$. Let us denote this group by $G$. It naturally
  acts on $\Pic(D_X)$ respecting the quadratic form and the Mori
  cone, thus we get a group homomorphism
\[
\rho \colon G \to \{\pm 1\}
\]
 where $\rho (g)=1$ if $\rho$ fixes $e$ and $e^\prime$, and $\rho (g)=-1$
 if $g$ exchanges $e$ with $e^\prime$. The involution $\tau$ exchanges
 $e$ and $e^\prime$, thus $G=G_D \rtimes \langle \tau \rangle$, where
 $G_D=Ker(\rho)$ is the subgroup of $\Aut(D_X)$ of elements that act trivially on $\Pic(D_X)$. Thus $G_D$ fixes the class $h\in \Pic(D_X)$ and
 hence it acts on the linear system $|\OO_D(1)|$ as projective linear
 transformations. 
\end{proof}

\begin{rem}
It is easy to see that if $D$ is general then the group $G_D$ of projective automorphisms of
$D\subset \PP^3$ is trivial. 
\end{rem}

\end{say}

We are now ready to prove the first conclusion of Theorem~B, i.e., that the pliability of the pair $(\PP^3, D)$ is the set with two
elements $\{(\PP^3,D)/\Spec \CC, (X, D_X)/ \PP^2\}$.

\begin{proof}[Proof of Theorem~B(1)]
  Let $(\P^3,D)$ be a Mf CY pair, where $D\subset\mathbb{P}^3$ is a
  quartic surface having exactly one singular point $z\in D$ of type
  $A_1$, and such that $\Cl(D)\cong \mathbb{Z}$ is generated by the
  class of a hyperplane section. Note that $(\P^3,D)$ is canonical.

 Let  $(Y,D_Y)\rightarrow S_Y$ be a Mf CY pair, and 
  $\varphi\colon (\mathbb{P}^3,D)\dasharrow (Y,D_Y)$ a volume
  preserving birational map that is not biregular.
By Theorem~\ref{fact}, the map $\varphi$ factors as the composition
  of volume preserving Sarkisov links. By Proposition~\ref{codim2}, the first step of any
  Sarkisov factorisation is a divisorial contraction with
  zero-dimensional centre.  By Lemma~\ref{buA1}, this divisorial
  contraction $\sigma\colon X\rightarrow\mathbb{P}^3$ is the blow-up of $z$.
Let $\pi\colon X\to \P^2$ be the Mori fibration induced by the projection
  from $z$, and assume that $\varphi\circ \sigma\colon X \dasharrow Y$ is not biregular. 
Let $D_X\subset X$ be the strict transform of $D$.
Since by assumption $D$ does not contain any line, the
  restriction $\pi_{|D_X}\colon D_X\to \mathbb{P}^2$ is a finite morphism of degree $2$.

Note that $X$ has Picard rank two, and the morphisms $\sigma$ and $\pi$
  are the two extremal contractions of $X$.  Therefore, the Sarkisov
  factorisation of $\varphi$ must proceed with a volume preserving
  divisorial contraction $g\colon (Z,D_Z)\rightarrow (X,D_X)$.  Since
  $D_X\subset X$ is smooth, Proposition~\ref{divextter} implies that the
  centre of the divisorial contraction $g\colon Z\rightarrow X$ is a
  curve $\cC\subset D_X$, and $D_Z$ is the strict transform of $D_X$ in
  $Z$.  At the generic point of $\cC$, the morphism
  $g\colon Z\rightarrow X$ coincides with the blow-up of $\cC$.  We
  denote by $F\subset Z$ the exceptional divisor of
  $g\colon Z\rightarrow X$.

  In order to describe the next link in a Sarkisov factorisation of
  $\varphi$, we describe the classes of irreducible curves that are
  contracted by $\pi\circ g$. They are either contained in fibers of
  $g_{|F}\colon F\rightarrow \cC$, or they are strict transforms of
  fibers of $\pi\colon X\to \P^2$.  Set
  $\Gamma = \pi(\cC)\subset\mathbb{P}^2$, and denote by $d$ the degree
  of the finite morphism $\pi_{|\cC}\colon \cC\to \Gamma$. We know that
  $d\in\{1,2\}$ (as we just said, $\cC\subset D_X$), but we will soon see that $d=1$. For all
  $q\in \mathbb{P}^2\setminus \Gamma$,
  $(\pi\circ g)^{-1}(q) \cong \P^1$.  For a general point
  $q\in \Gamma$, $(\pi\circ g)^{-1}(q)$ has $d+1$ rational components:
  the strict transform of $\pi^{-1}(q)$, and $d$ fibers of
  $g_{|F}\colon F\rightarrow \cC$, corresponding to the $d$ points of
  intersection $\pi^{-1}(q)\cap \cC$.

The next link in the Sarkisov factorisation of $\varphi$ is either of type~(I) or of type~(II). 
We show that it cannot be a Sarkisov link of type~(I), as in the following diagram:
  \[
  \begin{tikzpicture}[xscale=1.5,yscale=-1.2]
    \node (A0_1) at (1, 0) {$Z$};
    \node (A0_2) at (2, 0) {$X'$};
    \node (A1_0) at (0, 1) {$X$};
    \node (A1_2) at (2, 1) {$S$};
    \node (A2_0) at (0, 2) {$\mathbb{P}^2$};
    \path (A1_2) edge [->,swap]node [auto] {$\scriptstyle{r}$} (A2_0);
    \path (A0_2) edge [->]node [auto] {$\scriptstyle{\pi'}$} (A1_2);
    \path (A0_1) edge [->,dashed]node [auto] {$\scriptstyle{}$} (A0_2);
    \path (A0_1) edge [->,swap]node [auto] {$\scriptstyle{g}$} (A1_0);
    \path (A1_0) edge [->,swap]node [auto] {$\scriptstyle{\pi}$} (A2_0);
  \end{tikzpicture}
  \]
  Here $Z\dasharrow X'$ is a sequence of Mori flips, flops and antiflips, $\pi'\colon X'\to S$ is a Mori fiber space, and
  $r\colon S\rightarrow \mathbb{P}^2$ is a divisorial contraction with
  centre a point $q\in \mathbb{P}^2$.  Note that
  $(r\circ \pi')^{-1}(q)$ is a surface in $X'$, and so its strict
  transform in $Z$ is also a surface.  The commutativity of the above
  diagram then implies that $(\pi\circ g)^{-1}(q)$ is a surface, which
  is impossible since the fibers of $\pi\circ g$ are $1$-dimensional.

We conclude that the next link in a Sarkisov factorisation of $\varphi$ is of type~(II), as in the following diagram:
  \[
  \begin{tikzpicture}[xscale=1.5,yscale=-1.2]
    \node (A0_1) at (1, 0) {$Z$};
    \node (A0_2) at (2, 0) {$Z'$};
    \node (A1_0) at (0, 1) {$X$};
    \node (A1_3) at (3, 1) {$X'$};
    \node (A2_0) at (0, 2) {$\mathbb{P}^2$};
    \node (A2_3) at (3, 2) {$\mathbb{P}^2$};
    \path (A0_1) edge [->,swap]node [auto] {$\scriptstyle{g}$} (A1_0);
    \path (A1_3) edge [->]node [auto] {$\scriptstyle{\pi'}$} (A2_3);
    \path (A2_0) edge [-,double distance=1.5pt]node [auto] {$\scriptstyle{}$} (A2_3);
    \path (A1_0) edge [->,swap]node [auto] {$\scriptstyle{\pi}$} (A2_0);
    \path (A0_2) edge [->]node [auto] {$\scriptstyle{g'}$} (A1_3);
    \path (A0_1) edge [->,dashed]node [auto] {$\scriptstyle{\chi}$} (A0_2);
    \path (A1_0) edge [->,dashed]node [auto] {$\overline{\scriptstyle{g}}$} (A1_3);
  \end{tikzpicture}
  \]
  Here $\chi\colon Z\map Z'$ is a sequence of Mori flips, flops and
  antiflips, and $g'\colon Z'\to X'$ is a divisorial contraction.  We
  denote by $D_{X'}$ the strict transform of $D_X$ in $X'$.  It is
  normal by Lemma~\ref{D_normal}.  We will show that
  $\pi'\colon X'\to \P^2$ is a $\P^1$-bundle square equivalent to
  $\pi\colon X\to \P^2$, and that
  $\overline{g}=g'\circ\chi\circ g^{-1}\colon X\dasharrow X'$
  restricts to an isomorphism between $D_{X}$ and $D_{X'}$.

From the description of the curves contracted by $\pi\circ g$ above,
and the fact that $\chi\colon Z\map Z'$ is an isomorphism over the
complement of a finite subset of $\mathbb{P}^2$, we see that
$g'\colon Z'\to X'$ contracts the strict transform of
$\pi^{-1}(\Gamma)$ in $Z'$ onto a curve $\cC'\subset X'$, which is
mapped to $\Gamma$ by $\pi'$. Recall that $\pi_{|\cC}\colon \cC\to \Gamma$
is a finite morphism of degree $d\in\{1,2\}$. If $d=2$, then $D_{X'}$
would be singular along $\cC'$, and hence not normal. So we conclude
that $d=1$, and the general fiber of $\pi'\colon X'\to \P^2$ over
$\Gamma$ is irreducible, and thus isomorphic to $\P^1$. Thus,
$\pi'\colon X'\to \P^2$ is a $\P^1$-bundle over the complement of a
finite subset of $\mathbb{P}^2$. It follows from \cite[Theorem
5]{AR14} that $\pi'\colon X'\to \P^2$ is a $\P^1$-bundle.  It is
clearly square equivalent to $\pi\colon X\to \P^2$ via $\overline{g}$.

To show that the restricted map
$\overline{g}_{|D_X}\colon D_X\dasharrow D_{X'}$ is an isomorphism, we
first note that it does not contract any curve. This follows from the
commutativity of the diagram above, and the fact that $D_X$ does not
contain any fiber of $\pi$. By Zariski's Main Theorem, the birational
inverse of $\overline{g}_{|D_X}$ is a morphism.  Adjunction yields
that $K_{D_X}\sim 0$ and $K_{D_{X'}}\sim 0$. Since $D_X$ is smooth, in
particular terminal, we conclude that
$(\overline{g}_{|D_X})^{-1}\colon D_{X'}\to D_X$ is an isomorphism.

The same argument as above shows that the next link in the Sarkisov
factorisation of $\varphi$ cannot be of type~(I). It also shows that,
if it is of type~(II), then it ends with a $\P^1$-bundle
$\pi''\colon X''\to \P^2$, square equivalent to
$\pi'\colon X'\to \P^2$, and the birational map $X'\dasharrow X''$
restricts to an isomorphism between $D_{X'}$ and its strict transform
$D_{X''}$.  So, after a finite number of Sarkisov links of type~(II),
we reach a $\mathbb{P}^1$-bundle, which we keep denoting by
$\pi'\colon X'\to \P^2$, square equivalent to $\pi\colon X\to \P^2$,
and either the Sarkisov factorisation of $\varphi$ is finished, or it
must continue with a link of type~(III) or~(IV).  Note moreover that
the strict transform $D_{X'}$ of $D_X$ in $X'$ is a smooth member of
$|-K_{X'}|$ isomorphic to $D_X$, and it does not contain any fiber of
$\pi'$.  In order to prove the theorem, assuming that the Sarkisov
factorisation of $\varphi$ is not finished, we must show that $X'$ is
isomorphic to the blow-up of $\P^3$ at a point.

If the Sarkisov factorisation of $\varphi$ is not finished, then the
next link starts with a birational map corresponding to an extremal ray $R\subset
\NE(X')$. Let $\gamma\subset X'$ be a reduced and irreducible curve such that
$R=\bR_{\geq 0}[\gamma]$.  We shall show that $-K_{X'}\cdot \gamma \geq 0$.

Suppose for a contradiction that $-K_{X'}\cdot \gamma <0$.  Then the
contraction of the extremal ray $R=\bR_{\geq 0}[\gamma]$ is small, and
the Sarkisov link starts with an antiflip
$X' = Y^{+}\dasharrow Y^{-}$. We will show that $Y^{-}$ has worse than terminal
singularities, which is not allowed in the definition of volume
preserving Sarkisov link (Definition~\ref{dfn:4}).

The assumption that $-K_{X'}\cdot \gamma = D_{X'}\cdot \gamma <0$
implies that $\gamma\subset D_{X'}$. Recall from Paragraph~\ref{mc1}
that $\NE(D_{X'}) = \left\langle e, e'\right\rangle_+$, where $e$ and
$e'$ are $(-2)$-curves in $D_{X'}$. Therefore, either $\gamma=e$ or
$\gamma=e'$. Set $k:=-D_{X'}\cdot \gamma >0$.  It follows from
(\ref{intersection_matrix}) that $k$ is even, and hence $k\geq 2$.  By
Lemma~\ref{lem:inverse flips}, $Y^{-}$ has worse than terminal
singularities, as anticipated.

This contradiction proves that $-K_{X'}\cdot \gamma \geq 0$. The other extremal
ray of $\NE(X')$ is generated by the class of a fiber of the
$\P^1$-bundle $\pi'\colon X'\to \P^2$, which has positive intersection
with $-K_{X'}$.  Therefore, if $-K_{X'}\cdot \gamma > 0$ then $X'$ is
Fano.  If $-K_{X'}\cdot \gamma = 0$, then the contraction of the
extremal ray $R=\bR_{\geq 0}[\gamma]$, which is induced by the linear
system $\big|-mK_{X'}\big|$ for $m\gg 0$, is small --- otherwise
after the contraction we get a variety with strictly canonical
singularities, which is not allowed --- and the Sarkisov
link starts with a Mori flop. In this case $X'$ is weak Fano (i.e.,
$-K_{X'}$ is nef and big).

Theorem~\ref{thm:weakFanoVectorBundles} and the two lists that accompany it
show the rank~$2$ vector bundles $\cE$ on $\P^2$ with $c_1(\cE) \in
\{0,-1\}$ such that $\mathbb{P}(\cE)$ is Fano or weak
Fano. Below we follow the conventions on vector bundles summarised in
section~\ref{sec:fano-weak-fano}.

Let $\cE$ be the rank two vector bundle on $\P^2$ with
$c_1(\cE)\in \{0,-1\}$ such that $X'\cong \mathbb{P}(\cE)$, 
and denote by $\pi'\colon X'\to  \P^2$ the natural projection. 
In order to prove the theorem, we must show that
$\cE\cong \mathcal{O}_{\P^2}\oplus \mathcal{O}_{\P^2}(-1)$. To do so,
we compare the lattice $\Pic (D_{X'})$ with the sublattice obtained as
the image of the restriction homomorphism
\[
r\colon  \ \Pic (X') \ \to \ \Pic (D_{X'}).
\]
The Picard group $\Pic({X'})$ is generated by $L' =
\big[(\pi')^{*}\big(\O_{\P^2}(1)\big)\big]$ and $\xi =
c_1\big(\O_{\mathbb{P}(\cE)}(1)\big)$. 
Working in $A^\bullet (X^\prime)$ and using that $D_{X'} \ \sim \ -K_{X'} \ \sim 2\xi+\ (-c_1+3)L'$
(by Lemma~\ref{lem:canonical_in_PP(E)}), we compute the intersection matrix
of $r \Big(\Pic\big( \PP(\cE)\big)\Big)\subset \Pic (D_{X^\prime})$ in the basis $r(L'), r(\xi)$:
\begin{equation}\label{restriction_matrix}
\left(\begin{array}{cc}
L'^2\cdot D_{X^\prime} & \xi \cdot L' \cdot D_{X^\prime}\\ 
\xi \cdot L' \cdot D_{X^\prime} & \xi^2 \cdot D_{X^\prime}
\end{array}\right)
=
\left(\begin{array}{cc}
2 & c_1+3 \\ 
c_1+3 & c_1^2+3c_1-2c_2
\end{array}\right).
\end{equation}
Direct inspection shows that this matrix has rank two for all vector
bundles in the two lists, except for the one in (8), namely
$\mathcal{O}_{\P^2}(1)\oplus\mathcal{O}_{\P^2}(-2)$. On the other
hand, we cannot have
$X'\cong\P\big(\mathcal{O}_{\P^2}(1)\oplus\mathcal{O}_{\P^2}(-2)\big)$
because the anti-canonical contraction of
$\P\big(\mathcal{O}_{\P^2}(1)\oplus\mathcal{O}_{\P^2}(-2)\big)$ is
divisorial and not small.  In all other cases, the restriction
homomorphism $r\colon \Pic(X')\to \Pic(D_{X'})$ is injective with 
finite cokernel. We claim that it is also surjective. Pick $\alpha\in \Pic
(D_{X^\prime})$. By Proposition~\ref{pic}, the first Chern class $c_1(\alpha)\in
H^2(D_{X^\prime},\ZZ)$ is in the image of $H^2(X^\prime,\ZZ)$. By the
Lefschetz theorem on $(1,1)$ classes, $H^2(X^\prime,\ZZ)=\Pic (X^\prime)$, hence
there is a line bundle $\widetilde \alpha$ on $X^\prime$ such that
\[
c_1\left(r(\widetilde{\alpha})\right) =c_1(\alpha).
\]
But $c_1\colon \Pic (D_{X^\prime})\to H^2(D_{X^\prime},\ZZ)$ is
injective, hence $r(\widetilde{\alpha})=\alpha$.  It follows that the
matrices \eqref{intersection_matrix} and \eqref{restriction_matrix}
must have the same determinant.  One checks easily that this only
happens in case (1), i.e., when
$X'\cong \mathbb{P}\big(\mathcal{O}_{\P^2}\oplus
\mathcal{O}_{\P^2}(-1)\big)$ is the blow-up of $\P^3$ at a point.
Denote by $D'$ the image of $D_{X^\prime}$ in $\P^3$.
Since the blow-up $(X^\prime, D_{X^\prime})\rightarrow (\P^3, D')$ is volume preserving, 
its restriction to $D_{X^\prime}$ contracts one of the two $(-2)$-curves $e$ or $e'$.
So we have $D\cong D'$, and  $(\P^3, D)\cong (\P^3, D')$.
\end{proof}

Next, we prove the second conclusion of Theorem~B, that is, we describe the group $ \Bir(\PP^3, D)$ of volume preserving birational self-maps of $(\P^3,D)$.

\begin{proof}[{Proof of Theorem B}(2)]
By Proposition~\ref{group1}, since $(\P^3,D)$ is canonical, there is a restriction homomorphism 
\begin{equation}\label{restriction}
r\colon \Bir(X, D) \ \to \ \Bir(D).
\end{equation}

We use the same notation as above: $\sigma\colon X\rightarrow\mathbb{P}^3$
denotes the blow-up of the singular point $z \in D$, $D_X\subset X$
the strict transform of $D$, and $\pi\colon X\to \P^2$ the fibration
induced by the projection from $z$.  Recall from Paragraph~\ref{autD}
that the restriction $\pi_{|D_X}\colon D_X\to \mathbb{P}^2$ is a
double cover, and denote by $\tau\colon D_X\to D_X$ the associated
involution. Lemma~\ref{lem:bir_k3} states that
$$
\Bir(D)= \Aut(D_X) \cong \Aut(\PP^3,D) \rtimes \langle \tau \rangle.
$$
After a change of coordinates, we can write the equation of $D$ in $\P^3$ as 
$$
x_3^{2}A(x_0,x_1,x_2)+x_3B(x_0,x_1,x_2)+C(x_0,x_1,x_2)=0,
$$
where $A=A(x_0,x_1,x_2)$, $B=B(x_0,x_1,x_2)$ and $C=C(x_0,x_1,x_2)$ are homogeneous of degree $2$, $3$ and $4$, respectively. 
The singular point of $D$ has coordinates $z=[0:0:0:1]$.

\begin{claim}\label{splitting}
The restriction homomorphism \eqref{restriction} is surjective and admits a splitting
$$
\Bir(\mathbb{P}^3,D) \ \xrightarrow{\curvearrowleft} \ \Bir(D) = \Aut(\PP^3,D) \rtimes \langle \tau \rangle. 
$$
\end{claim}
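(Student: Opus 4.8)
The plan is to prove surjectivity and construct the splitting by exhibiting, \emph{explicitly}, volume preserving birational self-maps of $(\PP^3,D)$ lifting the generators of $\Bir(D)=\Aut(\PP^3,D)\rtimes\langle\tau\rangle$. First I would record that the tautological inclusion $\Aut(\PP^3,D)\hookrightarrow\Bir(\PP^3,D)$ maps, under $r$, isomorphically onto the subgroup $G_D=\Aut(\PP^3,D)\subset\Bir(D)$ of Lemma~\ref{lem:bir_k3}: a projective automorphism $g$ stabilising $D$ fixes the unique singular point $z$, hence lifts to $X$ preserving the exceptional divisor $E$, hence restricts to an automorphism of $D_X$ fixing both $h$ and $e=D_X\cap E$; so it acts trivially on $\Pic(D_X)$ and lies in $G_D=\ker\rho$. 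Conversely any element of $G_D$ fixes $h$, hence acts on $H^0(D,\OO_D(1))=H^0(\PP^3,\OO_{\PP^3}(1))$ and is induced by a unique element of $\PGL_4$ stabilising $D$. Thus $r$ already splits canonically over $G_D$, and it remains to lift $\tau$.

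To lift $\tau$ I would use the birational involution
\[
\iota\colon \PP^3\dashrightarrow\PP^3,\qquad [x_0:x_1:x_2:x_3]\longmapsto\bigl[x_0A:x_1A:x_2A:-B-x_3A\bigr],
\]
where $A=A(x_0,x_1,x_2)$, $B=B(x_0,x_1,x_2)$ come from the equation $x_3^2A+x_3B+C=0$ of $D$ (these are cubics, so $\iota$ is well defined). A direct check gives $\iota\circ\iota=\mathrm{id}$ and $\iota^{*}(x_3^2A+x_3B+C)=A^4(x_3^2A+x_3B+C)$, so $\iota$ stabilises $D$; by Proposition~\ref{group1}, $\iota\in\Bir(\PP^3,D)$. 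Geometrically, for fixed $(x_0:x_1:x_2)$ the map $\iota$ sends $x_3$ to $-B/A-x_3$, i.e. it interchanges the two values of $x_3$ cut out on $D$; hence $\iota$ restricts on $D_X$ to a nontrivial automorphism commuting with $\pi_{|D_X}$, which therefore must be the covering involution $\tau$. In other words $r(\iota)=\tau$.

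The key step is to verify that $\iota$ commutes with every $\tilde g\in\Aut(\PP^3,D)$. Writing $\tilde g\colon(x_0,x_1,x_2,x_3)\mapsto(Mx,\ \ell+cx_3)$ with $M\in\GL_3$, $\ell$ linear in $x_0,x_1,x_2$, and $c\neq0$, the single relation $\tilde g^{*}F=\nu F$ forces, on comparing coefficients of $x_3^2$ and $x_3$, the transformation rules $A(Mx)=\lambda A$ and $B(Mx)=\lambda(cB-2\ell A)$ for a scalar $\lambda\neq0$. Substituting these into the two compositions $\iota\circ\tilde g$ and $\tilde g\circ\iota$ one finds both equal $[A\cdot Mx:\ \ell A-cB-cx_3A]$ up to the projective scalar $\lambda$. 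Hence $\langle\Aut(\PP^3,D),\iota\rangle=\Aut(\PP^3,D)\times\langle\iota\rangle$ inside $\Bir(\PP^3,D)$; applying $r$ this forces $\tau$ to be central in $\Bir(D)$, so $\Bir(D)=\Aut(\PP^3,D)\times\langle\tau\rangle$. Now $r$ is surjective (its image contains $\Aut(\PP^3,D)$ and $\tau$), and
\[
s\colon\Bir(D)=\Aut(\PP^3,D)\times\langle\tau\rangle\longrightarrow\Bir(\PP^3,D),\qquad(g,\tau^{\varepsilon})\longmapsto\tilde g\,\iota^{\varepsilon},
\]
is a group homomorphism (using $\iota^2=1$ and the commutation just proved) with $r\circ s=\mathrm{id}$, which is the desired splitting.

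I expect the commutation computation in the third step to be the only real obstacle: one must extract precisely the transformation rules of $A,B,C$ from $\tilde g^{*}F=\nu F$ and check that they make $\iota$ and $\tilde g$ commute \emph{exactly}, not merely modulo $\ker r$ — this is what lets the canonical section over $G_D$ and the involution $\iota$ be assembled into a section over all of $\Bir(D)$. A secondary point needing care is the identification $r(\iota)=\tau$ (rather than $\tau$ post-composed with some $g\in G_D$), which is pinned down by the fibrewise description of $\iota$ together with the fact that the deck group of $\pi_{|D_X}$ has order two; everything else is formal once $\iota$ is in hand.
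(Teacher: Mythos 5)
Your proposal is correct, and it reaches the conclusion by a genuinely different route than the paper at the crucial step. Both arguments use the same explicit involution $\iota=\varphi\colon (x_0:x_1:x_2:x_3)\mapsto (Ax_0:Ax_1:Ax_2:-Ax_3-B)$ to lift $\tau$ (and your verifications that $\iota^2=\mathrm{id}$, $\iota^*F=A^4F$, and $r(\iota)=\tau$ via the deck-transformation argument are all sound). The difference is in how the section is shown to be a homomorphism: the paper only proves the weaker statement that $\varphi\circ h\circ\varphi$ is \emph{biregular} for every $h\in\Aut(\PP^3,D)$, and it does so geometrically, by writing out an explicit volume preserving Sarkisov factorization of $\varphi$ through the intermediate models $X$, $Z$, $X'$, $Z'$ and checking that $h$ lifts equivariantly through each step; you instead prove the stronger statement that $\iota$ \emph{commutes} with every $h\in\Aut(\PP^3,D)$, by extracting from $h^*F=\nu F$ the transformation rules $A(Mx)=\lambda A$, $B(Mx)=\lambda(cB-2\ell A)$ and comparing the two compositions in coordinates (your computation is correct: both equal $\bigl(A\cdot Mx,\ \ell A-cB-cx_3A\bigr)$ up to the scalar $\lambda$). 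Your route is more elementary — pure coordinate algebra, no Sarkisov machinery — and it buys a sharper conclusion, namely that $\langle\Aut(\PP^3,D),\iota\rangle$ is a direct product and hence that $\tau$ centralizes $\Aut(\PP^3,D)$ in $\Bir(D)$ (which is indeed true, and consistent with the paper's $\rtimes$, since conjugating the covering involution of $\pi_{|D_X}$ by any automorphism over $\PP^2$ returns a nontrivial deck transformation, i.e.\ $\tau$ itself). What the paper's approach buys instead is the explicit factorization of $\varphi$ into volume preserving links, which is of independent interest and in the spirit of the rest of the paper. The only points to state carefully in a final write-up are the ones you already flag: that every $h\in\Aut(\PP^3,D)$ fixes the unique singular point $z$ and hence has the triangular form used in the computation, and that $r(\iota)$ is exactly $\tau$ (pinned down by $\iota$ being a map over $\PP^2$ and the deck group having order two).
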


\begin{proof}[{Proof of Claim~\ref{splitting}}]
To see that $r\colon \Bir(\PP^3, D) \ \to \ \Bir(D)= \Aut(\PP^3,D) \rtimes \langle \tau \rangle$ is surjective notice that the birational involution 
\begin{equation}\label{induce_tau}
\varphi(x_0:x_1:x_2:x_3) = ( Ax_0 : Ax_1 : Ax_2 : -A x_3-B),
\end{equation}
restricts to the nontrivial birational involution $\tau$.

Consider the splitting $\Bir(D) \to \Bir(\PP^3, D)$ of $r$ that is canonical on the normal subgroup  $\Aut(\PP^3,D)$, and sends $\tau$ to $\varphi$. 
To show that this is well-defined, we must check that, for any automorphism $h\in \Aut(\PP^3,D)$, we have $\varphi\circ h\circ \varphi\in \Aut(\PP^3,D)$. 
In order to prove this, we first describe a volume preserving Sarkisov factorisation of $\varphi$:
\[
  \begin{tikzpicture}[xscale=1.5,yscale=-1.2]
    \node (b) at (-1.5, 2.5) {$\mathbb{P}^3$};
    \node (e) at (5.5, 2.5) {$\mathbb{P}^3$};
    \path (A1_0) edge [->,swap]node [auto] {$\scriptstyle{\sigma}$} (b);
    \node (A0_1) at (1, 0) {$Z$};
    \node (A0_2) at (3, 0) {$Z'$};
    \node (A1_0) at (0, 1) {$X$};
    \node (A1_2) at (2, 1) {$X'$};
    \node (A1_3) at (4, 1) {$X$};
    \node (A2_0) at (0, 2) {$\mathbb{P}^2$};
    \node (A2_2) at (2, 2) {$\mathbb{P}^2$};
    \node (A2_3) at (4, 2) {$\mathbb{P}^2$};
    \path (A0_1) edge [->,swap]node [auto] {$\scriptstyle{\alpha}$} (A1_0);
    \path (A0_1) edge [->,swap]node [auto] {$\scriptstyle{\beta}$} (A1_2);
    \path (A1_3) edge [->]node [auto] {$\scriptstyle{\pi}$} (A2_3);
    \path (A1_2) edge [->]node [auto] {$\scriptstyle{\pi'}$} (A2_2);
    \path (A2_0) edge [-,double distance=1.5pt]node [auto] {$\scriptstyle{}$} (A2_2);
    \path (A2_2) edge [-,double distance=1.5pt]node [auto] {$\scriptstyle{}$} (A2_3);
    \path (A1_0) edge [->,swap]node [auto] {$\scriptstyle{\pi}$} (A2_0);
    \path (A0_2) edge [->]node [auto] {$\scriptstyle{\delta}$} (A1_3);
    \path (A0_2) edge [->]node [auto] {$\scriptstyle{\gamma}$} (A1_2);
    \path (A1_0) edge [->,dashed]node [auto] {} (A1_2);
    \path (A1_2) edge [->,dashed]node [auto] {} (A1_3);
    \path (A1_3) edge [->]node [auto] {$\scriptstyle{\sigma}$} (e);
    \path (b) edge [->,dashed,bend right=-15]node [auto] {$\scriptstyle{\varphi}$} (e);
  \end{tikzpicture}
  \]
  The factorisation starts with the blow-up
  $\sigma\colon X\rightarrow\mathbb{P}^3$ of $z$. Denote by $E\subset X$
  its exceptional divisor. The base locus of $\varphi\circ \sigma$ contains
  the curve $e=E\cap D_X$.  Note that $\pi\colon X\to \P^2$ maps $e$
  isomorphically onto the conic $\big(A=0\big)\subset \PP^2$, and
  the cylinder $\pi^{-1}\big(A=0\big)\subset X$ is precisely the strict
  transform of the tangent cone of $D\subset \PP^3$ at $z$.  The next
  link in the Sarkisov factorisation is the composition
  $\beta\circ\alpha^{-1}$, where $\alpha$ and $\beta$ are described as
  follows.  The morphism $\alpha:Z\to X$ is the blow-up of
  $e= E\cap D_X$. Denote by $F\subset Z$ its exceptional divisor. The
  base locus of $\varphi\circ \sigma\circ \alpha$ contains the curve
  $e_F=F\cap D_Z$.  The morphism
  $\beta:Z\to X'\cong\PP\big(\mathcal{O}_{\P^2}\oplus
  \mathcal{O}_{\P^2}(3)\big)$ contracts the rulings of the strict
  transform of the tangent cone of $D\subset \PP^3$ at $z$, and maps
  $F\subset Z$ isomorphically onto the cylinder
  $F'=(\pi')^*\big(A=0\big)\subset X'$. The base locus of
  $\varphi\circ \sigma\circ \alpha \circ \beta^{-1}$ consists of the curve
  $e_{F'}=\beta(e_F)\subset F'\cap D_{X'}$, which is mapped by $\pi'$
  isomorphically onto the conic $\big(A=0\big)\subset \PP^2$.  The
  next link in the Sarkisov factorisation is the composition
  $\delta\circ\gamma^{-1}$, where $\gamma:Z'\to X'$ is the blow-up of
  $e_{F'}$, and $\delta:Z'\to X$ contracts the rulings of the strict
  transform of the cylinder $F'$ on $Z'$.  The factorisation then ends
  with the blow-up $\sigma\colon X\rightarrow\mathbb{P}^3$ of $z$.

  Any automorphism $h\in \Aut(\PP^3,D)$ fixes $z$ and stabilises the tangent cone of $D$ at $z$. 
  So it lifts to an automorphism of $X$ that
 stabilises $E$, $D_{X}$, and the rulings over
  $\big(A=0\big)$. Since $\alpha$ blows-up $E\cap D_{X}$, $h$ lifts to
  an automorphism of $Z$. On the other hand, $\beta$ contracts the
  ruling of the strict transform of $\pi^{-1}\big(A=0\big)$, and thus $h$
  descends to an automorphism of $X'$ that stabilises $D_{X'}$, the
  ruling over $\big(A=0\big)$, and each of the components of
  $D_{X'}\cap (\pi')^*\big(A=0\big)$. The same argument shows that $h$ lifts
  to $Z'$, and then descends to $X$, always stabilising the strict
  transform of $E$.  This shows that the birational map
  $\varphi\circ h$ has the same Sarkisov decomposition as $\varphi$,
  and therefore $\varphi\circ h\circ \varphi$ is biregular.
\end{proof}

It follows from Claim~\ref{splitting} that there is a split exact sequence:
 \begin{equation}
   \label{eq:exact_seq}
1\to \mathbb{G} \to \Bir (\PP^3, D) \ \xrightarrow{\curvearrowleft} \Bir (D) \to 1,
 \end{equation}
 where $\mathbb{G}$ is the group of birational self maps of $\PP^3$
 fixing $D$ pointwise.  We saw in the proof of Theorem B(1) that any
 $\psi\in \SBir(\P^3,D) $ preserves the star of lines through
 $z$. Therefore, we can identify $\mathbb{G}$ with the group of
 birational self-maps of $X$ over $\P^2$ fixing $D_X$ pointwise.

 We view $X$ as a model of $\P^1$ over $\mathbb{C}(x,y)$ with
 projective coordinates $(u:v)$.  Setting $a(x,y)= A(1,x,y)$,
 $b(x,y)= B(1,x,y)$ and $c(x,y)= C(1,x,y)$, $\mathbb{G}$ becomes the identity
 component of the subgroup $G_Q$ of $PGL\big(2,\mathbb{C}(x,y)\big)$
 of projective transformations preserving the quadratic form
$$
Q(u,v) \ =\ a(x,y)u^2+b(x,y)uv+c(x,y)v^2
$$
up to scaling. 

\begin{Lemma}\label{quad}
Let $Q(u,v)=Au^2+Buv+Cv^2$ be a quadratic form with coefficients in a field $K$, and let $G_{Q}$ be the subgroup of $PGL(2,K)$ of projective transformations preserving $Q$ up to scalar:
$$G_Q := \left\lbrace
\phi = \left(\begin{array}{cc}
  \alpha & \beta \\ 
  \gamma & \delta
  \end{array}\right)\in PGL(2,K)\: \Big| \: Q(\phi(u,v)) = \lambda Q(u,v),\: \lambda\in K\setminus\{0\}\right\rbrace.
$$
Then $G_{Q}$ has two irreducible components, given by 
\begin{equation}\label{comp}
\left\lbrace\begin{array}{l}
  \alpha = -\frac{B}{A}\gamma+\delta, \\ 
  \beta = -\frac{C}{A}\gamma    
  \end{array}\right.  \ \ \ and \ \ \ \
\left\lbrace\begin{array}{l}
  \alpha = -\delta, \\ 
  \beta = \frac{C}{A}\gamma-\frac{B}{A}\delta.
  \end{array}\right.   
\end{equation}
\end{Lemma}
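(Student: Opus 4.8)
The plan is to reformulate everything in terms of Möbius transformations and the pair of roots of $Q$. Work in the affine coordinate $t=u/v$ on $\PP^1$, so that $\phi$ with entries $\alpha,\beta,\gamma,\delta$ acts by $t\mapsto \frac{\alpha t+\beta}{\gamma t+\delta}$, and set $q(t)=At^2+Bt+C$. As the statement tacitly assumes — its formulas contain $B/A$ and $C/A$ — take $A\neq 0$; assume moreover that $Q$ is nondegenerate, i.e. $B^2-4AC\neq 0$, so that $q$ has two distinct roots $t_1\neq t_2$ in an algebraic closure $\overline K$ of $K$. (This nondegeneracy holds in our application, where $K=\CC(x,y)$.) Since $\phi$ is an automorphism and $Q$ is nondegenerate, $Q$ and $Q\circ\phi$ are nonzero binary quadratic forms — nonzero sections of $\mathcal O_{\PP^1}(2)$ — with reduced divisors of zeros $\{t_1,t_2\}$ and $\phi^{-1}\{t_1,t_2\}$; as such a section is determined up to a scalar by its divisor of zeros, $Q\circ\phi=\lambda Q$ for some $\lambda\in K^\times$ if and only if $\phi$ stabilizes $\{t_1,t_2\}$. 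Hence $G_Q=G_Q^0\sqcup G_Q^1$, where $\phi\in G_Q^0$ fixes each of $t_1,t_2$ and $\phi\in G_Q^1$ interchanges them; the two cases are mutually exclusive since $t_1\neq t_2$.

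First I would describe $G_Q^0$. The condition $\phi(t_i)=t_i$ reads $\gamma t_i^2+(\delta-\alpha)t_i-\beta=0$, so $\phi$ fixes both roots precisely when the polynomial $p(t)=\gamma t^2+(\delta-\alpha)t-\beta$, of degree at most $2$, vanishes at $t_1$ and at $t_2$; since $\{t_1,t_2\}$ is the reduced zero set of $q$, this forces $p=\mu q$ for a unique $\mu\in K$. Comparing coefficients gives $\gamma=\mu A,\ \delta-\alpha=\mu B,\ \beta=-\mu C$, whence $\mu=\gamma/A$ and
\[
\alpha=\delta-\tfrac{B}{A}\gamma,\qquad \beta=-\tfrac{C}{A}\gamma,
\]
which is the first system of \eqref{comp}; conversely these relations make $p$ proportional to $q$. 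These are two linearly independent linear equations in $[\alpha:\beta:\gamma:\delta]$, cutting out a line in $\PP^3$; removing the quadric $\{\alpha\delta-\beta\gamma=0\}$, which does not contain this line since the identity $[1:0:0:1]$ lies on it, leaves an irreducible curve — the first component of $G_Q$.

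For $G_Q^1$ I would use the affine reflection $\iota\colon t\mapsto -\tfrac{B}{A}-t$ (matrix $\alpha=-1,\ \beta=-B/A,\ \gamma=0,\ \delta=1$), an involution that swaps $t_1$ and $t_2$ because $t_1+t_2=-B/A$. Then $\phi$ swaps the roots if and only if $\iota\phi$ fixes both, so $G_Q^1=\iota\cdot G_Q^0$. Left multiplication by $\iota$ is a linear automorphism of $\PP^3$ preserving $\{\det\neq 0\}$, so $G_Q^1$ is again an irreducible curve, and a short computation identifies $\iota\cdot G_Q^0$ with the locus cut out by the second system $\alpha=-\delta,\ \beta=\tfrac{C}{A}\gamma-\tfrac{B}{A}\delta$ of \eqref{comp}. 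Finally $G_Q^0\cap G_Q^1=\emptyset$: combining the two systems yields $2\delta=\tfrac{B}{A}\gamma$ and then $\gamma\cdot\tfrac{B^2-4AC}{2A}=0$, so $\gamma=0$ and hence $\alpha=\beta=\gamma=\delta=0$, which is not a point of $PGL(2,K)$. Thus $G_Q$ is the disjoint union of the two irreducible curves described in \eqref{comp}.

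The only real subtlety — and essentially the only place the hypothesis is used — is this last disjointness step together with the step ``$p$ vanishes at $t_1,t_2\Rightarrow p\propto q$'': both break down when $B^2=4AC$, in which case $q$ has a double root, $G_Q^0$ becomes the two-dimensional Borel fixing that root, and the lemma is false as stated. I would therefore state the lemma with the nondegeneracy hypothesis $B^2-4AC\neq 0$ made explicit, or note that in the application $Q=au^2+buv+cv^2$ over $\CC(x,y)$ is nondegenerate (its discriminant being, up to squares, the nonzero branch sextic $a_6$ of Theorem~B); everything else is routine bookkeeping.
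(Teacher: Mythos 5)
Your proof is correct, but it takes a genuinely different route from the paper's. The paper argues by direct computation: it expands $Q(\phi(u,v))$, encodes proportionality with $(A,B,C)$ as a rank\,$\leq 1$ condition on a $2\times 3$ matrix, and solves the resulting pair of equations \eqref{sysquad} head-on, obtaining four candidate families for $(\alpha,\beta,\gamma,\delta)$, two of which are discarded because they force $\det\phi=0$, leaving exactly the two systems of \eqref{comp}. You instead interpret $G_Q$ as the stabilizer in $PGL_2$ of the unordered pair of roots $\{t_1,t_2\}$ of $q(t)=At^2+Bt+C$ on $\PP^1$, split it into the root-fixing part and the root-swapping coset, recover the first system from the proportionality $\gamma t^2+(\delta-\alpha)t-\beta=\mu\, q(t)$, and obtain the second as $\iota\cdot G_Q^0$ for the explicit involution $t\mapsto -\tfrac{B}{A}-t$; your matrix computation of that coset does reproduce the second system, and your disjointness check is right. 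What your route buys is conceptual clarity: it explains a priori why there are exactly two components (fix versus swap), and it isolates the hypotheses $A\neq 0$ and $B^2-4AC\neq 0$ and shows they are genuinely needed (for a square form the stabilizer is a two-dimensional Borel, so the statement fails). The paper's proof uses the same hypotheses tacitly --- it divides by $A$ throughout and by $\varepsilon$ with $\varepsilon^2=B^2-4AC$ when listing the two discarded families --- and both hypotheses hold in the application over $K=\CC(x,y)$, where $a=A(1,x,y)\neq 0$ because the tangent cone at the $A_1$-point has rank $3$, and the discriminant is the nonzero branch sextic; so your explicit flagging of nondegeneracy is a gain in precision rather than a discrepancy. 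What the paper's computation buys in return is that it is entirely elementary and self-contained, with no need to pass to $\overline{K}$ or to invoke the action on $\PP^1$.
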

\begin{proof}
We have that
$$
Q(\phi(u,v)) = u^2(\alpha^2A+\alpha\gamma B+\gamma^2C)+uv(2\alpha\beta A+\alpha\delta B + \beta\gamma B +2\gamma\delta C)+v^2(\beta^2 A+\beta\delta B+\delta^2 C).
$$
Therefore, $
\phi = \left(\begin{array}{cc}
  \alpha & \beta \\ 
  \gamma & \delta
  \end{array}\right)\in G_{Q}$ if and only if 
$$
\rank\left(\begin{array}{ccc}
A & B & C\\ 
\alpha^2A+\alpha\gamma B+\gamma^2C & 2\alpha\beta A+\alpha\delta B + \beta\gamma B +2\gamma\delta C & \beta^2 A+\beta\delta B+\delta^2 C
\end{array}\right) \leq 1,
$$
or, equivalently, if and only if
\begin{equation}\label{sysquad}
\left\lbrace\begin{array}{l}
\alpha^2 AC + \alpha\gamma BC + \gamma^2 C^2-\beta^2 A^2-\beta\delta AB-\delta^2 AC = 0,\\
\alpha^2 AB + \alpha\gamma B^2 + \gamma^2 BC - 2\alpha\beta A^2 - \alpha\delta AB - \beta\gamma AB - 2\gamma\delta AC = 0.
\end{array}\right.  
\end{equation}
If the $4$-tuple $(\alpha,\beta,\gamma,\delta)\in \mathbb{C}^4$ satisfies the pair of equations \eqref{sysquad},
then it satisfies one of the following pairs of equations: 
$$
\left\lbrace\begin{array}{l}
  \alpha = -\delta, \\ 
  \beta = \frac{C}{A}\gamma-\frac{B}{A}\delta,
  \end{array}\right.   
\left\lbrace\begin{array}{l}
  \alpha = -\frac{B}{A}\gamma+\delta, \\ 
  \beta = -\frac{C}{A}\gamma,
  \end{array}\right. 
\left\lbrace\begin{array}{l}
  \alpha = \frac{-B+\varepsilon}{2A}\gamma, \\ 
  \beta = \frac{B(B-\varepsilon)-4AC}{2A\varepsilon}\delta,
  \end{array}\right.   
\left\lbrace\begin{array}{l}
  \alpha = \frac{-B-\varepsilon}{2A}\gamma, \\ 
  \beta = -\frac{B(B+\varepsilon)-4AC}{2A\varepsilon}\delta,
  \end{array}\right.
$$
where $\varepsilon\in \mathbb{C}$ is such that $\varepsilon^2 = B^2-4AC$. 
On the other hand, if $(\alpha,\beta,\gamma,\delta)\in \mathbb{C}^4$ satisfies the third or the fourth pair of equations, then 
$\det\left(\begin{array}{cc}
  \alpha & \beta \\ 
  \gamma & \delta
  \end{array}\right) =0$.
Therefore $G_{Q}$ has two irreducible components, given by (\ref{comp}).
\end{proof}

Lemma~\ref{quad} shows that $G_Q$ has two irreducible components over $\mathbb{C}(x,y)$:
\begin{equation}\label{components_of_G}
G_Q^1  = \left\{\left(\begin{array}{cc}
-\frac{b(x,y)}{a(x,y)}\alpha+\beta & -\frac{c(x,y)}{a(x,y)}\alpha\\
\alpha & \beta  
\end{array}\right)\right\}, \ \ 
G_Q^2  =  \left\{\left(\begin{array}{cc}
-\beta & \frac{c(x,y)}{a(x,y)}\alpha-\frac{b(x,y)}{a(x,y)}\beta\\
\alpha & \beta  
\end{array}\right)\right\}.
\end{equation}
The component $G_Q^1$ contains the identity, and is precisely the
group $\mathbb{G}$ of~\eqref{eq:exact_seq}. In order to describe $\mathbb{G}$ as a form of $\mathbb{G}_m$, set $w \ = \ 2a(x,y)u + b(x,y)v$, and compute
$$
4a(x,y)Q(u,v) \  = \ w^2-\delta(x,y)v^2,
$$
where $\delta(x,y)=\Delta(1,x,y)$ and $\Delta = B^2-4AC$. 
Applying Lemma~\ref{quad} with the new projective coordinates $(v:w)$, $\mathbb{G}$ can be presented as 
the subgroup of $PGL\big(2,\mathbb{C}(x,y)\big)$
of elements of the form 
$$
\left(\begin{array}{cc}
U & \delta(x,y) V\\
V & U  
\end{array}\right)
$$
where $U,V\in \mathbb{C}(x,y)$ are such that $U^2-\delta(x,y) V^2 = 1$. 
This is the form of $\mathbb{G}_m$ described in \cite[Chapter 2, Example 2.3.2 (c)]{PS94}.
\end{proof}

\begin{Remark}
We can write down all the elements of $\SBir(\mathbb{P}^3,D)$ explicitly. 
It follows from \eqref{components_of_G} above that they can be written in one of the following two forms: 
$$
\varphi^1\colon  \ (x_0:x_1:x_2:x_3) \ \mapsto \ \big( A(Fx_3+G)x_0 : A(Fx_3+G)x_1 : A(Fx_3+G)x_2 : (AG-BF)x_3-CF \big)
$$ 
where either $F=0$ and $\deg(G)=0$, or $F,G\in \mathbb{C}[x_0,x_1,x_2]$ are homogeneous with $\deg(G)=\deg(F)+1$;
$$
\varphi^2\colon \ (x_0:x_1:x_2:x_3) \ \mapsto \ \big( A(Fx_3+G)x_0 : A(Fx_3+G)x_1 : A(Fx_3+G)x_2 : -AGx_3+CF-BG \big)
$$
where either $F=0$ and $\deg(G)=0$, or $F,G\in \mathbb{C}[x_0,x_1,x_2]$ are homogeneous with $\deg(G)=\deg(F)+1$.
\end{Remark}

\section{Proof of Theorem~C}
\label{section:A2}

In this section, we determine the pliability of Mf CY pairs of the
form $(\mathbb{P}^3,D)$, where $D$ is a quartic surface having exactly
one singular point $z\in D$ of type $A_2$, and such that
$\Cl(D)\cong \mathbb{Z}\cdot \OO_D(1)$ is generated by the class of a
hyperplane section. This last condition implies in particular that
$D$ does not contain lines. After a change of coordinates, we may
assume that the singular point is $z=[0:0:0:1]$, and write the
equation of $D$ as
\[
D = \Bigl(x_0x_1x_3^2+Bx_3+C
=0\Bigr) \subset \PP^3,
\]
where $B=B(x_0,x_1,x_2)$ and $C=C(x_0,x_1,x_2)$ are 
homogeneous polynomials of degree $3$ and $4$, respectively.

At the end of Section~\ref{sec:sarkisov}, we constructed volume
preserving Sarkisov links between the Mf CY pairs from Table 1:
$$
\xymatrix{\text{obj.~$1$}\ar@{-->}[dr]_{\epsilon_a, \epsilon_b} &
  \text{obj.~$2$} \ar@{->}[l]_\sigma & 
\text{obj.~$2^a$, $2^b$} \ar@{-->}[l]_{\nu^a, \nu^b}
\ar@{->}[dl]^{\chi^a, \chi^b}\\
 & \text{obj.~$3^a$, $3^b$} \ar@{-->}[dl]_{\phi^a, \phi^b}
 \ar@{-->}[dr]^{\psi^a, \widetilde{\psi}^b}& \\
 \text{obj.~$4$}& &\text{obj.~$5^a$} }
$$
In order to prove Theorem~C, we will show that these are all the
Sarkisov links from these Mf CY pairs, except for chains of square
equivalent Sarkisov links of type~(II) from objects 2, $2^a$ and $2^b$.

\begin{proof}[Proof of Theorem~C]
  Let $(Y,D_Y)/T$ be a Mf CY pair, and
\[
\Phi \colon (\PP^3, D) \dasharrow (Y, D_Y)
\]
a volume preserving birational map.
The goal is to show that $(Y,D_Y)\to T$ is square equivalent to one of
the objects in the conclusion of Theorem~C:
\begin{enumerate}[(1)]
\item $(\PP^3, D)$ (object~$1$);
\item $(X,D_X) = \left(\FF^3_1, D_{\binom{2}{2}}\right)$ (object~$2$); 
\item $(\PP(1^3,2),D^a_5)$ or $(\PP(1^3,2),D^b_5)$ (object~$3^a$ or $3^b$);
\item a member of the $3$-parameter family $\big\{(X_4,
  D_{3,4})\big\}$, with $X_4\subset \PP(1^3,2^2)$ (object~$4$); 
\item a member of the $7$-parameter family $\big\{(X_4,
  D_{2,4})\big\}$, with $X_4\subset \PP(1^4,2)$ (object~$5^a$).
\end{enumerate}

By the Sarkisov program for volume preserving birational maps of Mf CY
pairs (Theorem~\ref{fact}), there is a sequence of Mf CY pairs 
$$(\PP^3,D)/\Spec \CC=(X_0,D_0)/S_0 \ , \
(X_1,D_1)/S_1 \ , \ \dots \ , \ (X_n,D_n)/S_n=(Y,D_Y)/ T \ ,$$ 
and  volume preserving Sarkisov links
\[
\Phi_i \colon (X_{i-1},D_{i-1})/S_{i-1} \dasharrow (X_i,D_i)/S_i,
\]
$i=1,\dots, n$, such that $\Phi=\Phi_n\circ \cdots \circ \Phi_1$.

We prove by increasing induction on $i$ that each $(X_i,D_i)/S_i$ is
either isomorphic to one of the objects $1$, $3^a$, $3^b$, $4$, and $5^a$,
or it is obtained from $(X,D_X)/{\PP^2}$ (object $2$) after finitely
many (possibly none) volume preserving Sarkisov links of type~(II). In the latter
case, it is in particular square equivalent to object $2$. The base case
$i=0$ is clear since $(X_0,D_0)/S_0=(\PP^3,D)/\Spec \CC$ is
object~$1$.

For $i>0$, we assume by induction that $(X_{i-1},D_{i-1})/S_{i-1}$ is
either isomorphic to one of the objects 1, $3^a$, $3^b$, $4$, and $5^a$, or 
is obtained from $(X,D_X)/{\PP^2}$ after finitely many volume preserving Sarkisov links of type~(II).
We shall prove that the same holds for $(X_i,D_i)/S_i$. 
We discuss several cases depending on the nature
of $(X_{i-1},D_{i-1})/S_{i-1}$.

\smallskip

\paragraph{\bf{Case 1.}} Suppose that $(X_{i-1},D_{i-1})/S_{i-1}\cong (\PP^3,D)/\Spec \CC$  (object~$1$). We prove in Lemma~\ref{lem:Link1} below that the only volume
preserving Sarkisov links 
$$(\PP^3,D)/\Spec \CC \dasharrow (X^\dagger,D^\dagger)/S^\dagger$$ 
are the maps $\sigma^{-1},
\epsilon_a, \epsilon_b$ to objects~$2$, $3^a$ and~$3^b$, respectively. It follows that
$(X_i,D_i)/S_i$ is isomorphic to one of the objects~$2$, $3^a$ or~$3^b$.

\smallskip

\paragraph{\bf{Case 2.}}  Suppose that $(X_{i-1},D_{i-1})/S_{i-1}$ is
obtained from $(X,D_X)/{\PP^2}$ after finitely many volume preserving Sarkisov links of type~(II).
We show in Lemma~\ref{lem:Link2} below that one of the following holds: 
\begin{enumerate}[(a)]
\item $\Phi_i$ is a Sarkisov link of type~(II);
\item $(X_{i-1},D_{i-1})/S_{i-1}$ is isomorphic to object~$2$, and
  $\Phi_i=\sigma$;
\item $(X_{i-1},D_{i-1})/S_{i-1}$ is isomorphic to object~$2^a$, and
  $\Phi_i = \chi^a$;
\item $(X_{i-1},D_{i-1})/S_{i-1}$ is isomorphic to object~$2^b$, and
  $\Phi_i=\chi^b$. 
\end{enumerate}
In case (a), it follows that $(X_i,D_i)/S_i$ is obtained from
$(X,D_X)$ after finitely many volume preserving Sarkisov links of
type~(II). 
In cases (b), (c) and (d), it follows that $(X_i,D_i)/S_i$ is
isomorphic to object~$1$, $3^a$ and $3^b$, respectively. 

\smallskip

\paragraph{\bf{Case 3.}}  Suppose that $(X_{i-1},D_{i-1})/S_{i-1}$ is isomorphic to
    object~$3^a$ (case $3^b$ is similar). We prove in Lemma~\ref{lem:Link3} below
that  the only volume preserving Sarkisov links 
$$(\PP(1^3,2),D^a_5)/\Spec \CC \dasharrow (X^\dagger,D^\dagger)/S^\dagger$$ 
are the maps $\epsilon_a^{-1},(\chi^a)^{-1}, \phi^a, \psi^a$ to
objects~$1$, $2^a$, $4$ and~$5^a$, respectively.

\smallskip

\paragraph{\bf{Case 4.}}  Suppose that $(X_{i-1},D_{i-1})/S_{i-1}$ is isomorphic to
    object~$4$. We prove in Lemma~\ref{lem:Link4} below that  the only volume
preserving Sarkisov links 
$$(X_4, D_{3,4})/\Spec \CC \dasharrow (X^\dagger,D^\dagger)/S^\dagger$$ 
are the maps
$(\phi^a)^{-1}$, $(\phi^b)^{-1}$, and thus $(X_i,D_i)/S_i$ is isomorphic to object~$3^a$ or $3^b$.

\smallskip

\paragraph{\bf{Case 5.}}  Suppose that $(X_{i-1},D_{i-1})/S_{i-1}$ is
isomorphic to object~$5^a$. We prove in  Lemma~\ref{lem:Link5} below that the only volume
preserving Sarkisov links 
$$(X_4, D_{2,4})/\Spec \CC \dasharrow (X^\dagger,D^\dagger)/S^\dagger$$ 
are the maps $(\psi^a)^{-1}$ and $(\widetilde{\psi}^b)^{-1}$, and hence
$(X_i,D_i)/S_i$ is isomorphic to object~$3^a$ or~$3^b$.
\end{proof}

\begin{lem}\label{lem:Link1}
The only volume preserving Sarkisov links 
\[\Psi \colon (\PP^3,D)/\Spec \CC \dasharrow
(X^\dagger,D^\dagger)/S^\dagger\]
are the maps $\sigma^{-1}, \epsilon_a$, and $\epsilon_b$ described in Examples~\ref{link:1->2}
and \ref{link:1->3a}.
\end{lem}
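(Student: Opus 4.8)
The plan is to reduce $\Psi$ to its first step, a volume preserving Mori divisorial contraction, which by Lemma~\ref{buA1}(2) is one of three explicit (weighted) blow-ups of the $A_2$ point $z$; since a Sarkisov link is completely determined by its initial divisorial contraction, $\Psi$ must then coincide with one of the links $\sigma^{-1}, \epsilon_a, \epsilon_b$ already constructed in Examples~\ref{link:1->2} and~\ref{link:1->3a}.

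First I would observe that $\PP^3 \to \Spec\CC$ has relative Picard number one, and so admits no flips, flops or antiflips; consequently $\Psi$ cannot be a Sarkisov link of type~(III) or~(IV), and must be of type~(I) or~(II), beginning with a volume preserving Mori divisorial contraction $g \colon (Y,D_Y) \to (\PP^3,D)$. Now $(\PP^3,D)$ is a $\QQ$-factorial $(t,\mathrm{lc})$ pair --- in fact canonical --- $\PP^3$ is Fano with $\Cl(\PP^3) \cong \ZZ$, and $D$ is a normal hypersurface (its only singularity is the canonical $A_2$ point $z$) with $\Cl(\PP^3) \xrightarrow{\ \sim\ } \Cl(D)$; hence Proposition~\ref{codim2} applies and forces the center of $g$ to have codimension at least $3$ in $\PP^3$, i.e.\ to be a closed point. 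By Lemma~\ref{buA1}(2) this point must be $z$, the divisor $D_Y$ is the strict transform of $D$, and $g$ is either the blow-up of $z$, or the weighted blow-up of $z$ with weights $(2,1,1)$ or $(1,2,1)$ with respect to the affine coordinates $x_0, x_1, x_2$ on the chart $(x_3 = 1)$.

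It remains to run the (two-ray) minimal model program on $Y$, which is deterministic since $\rho(Y) = 2$: after $g$ there is a unique remaining extremal ray of $\NEbar(Y)$, so the tail of the link is forced. If $g$ is the blow-up of $z$, then $Y \cong \FF^3_1$ with $D_Y \cong D_{\binom{2}{2}}$, the other extremal contraction is the $\PP^1$-bundle $\pi \colon Y \to \PP^2$, the two-ray game terminates at once, and $\Psi$ is the type~(I) link $\sigma^{-1}$ of Example~\ref{link:1->2}. If $g$ is the weighted blow-up with weights $(2,1,1)$ (respectively $(1,2,1)$), then, since $\epsilon_b$ (respectively $\epsilon_a$) of Example~\ref{link:1->3a} is a volume preserving Sarkisov link whose first step is precisely this weighted blow-up, uniqueness of the two-ray game gives $\Psi = \epsilon_b$ (respectively $\Psi = \epsilon_a$). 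This exhausts the possibilities.

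I expect the only genuinely delicate point to be the reduction in the second paragraph: excluding a one-dimensional center for $g$ rests on the effective-cone argument inside Proposition~\ref{codim2} (via Lemma~\ref{l1}), and identifying $g$ with one of the three listed contractions rests on Lemma~\ref{buA1}(2), hence on Kawakita's classification~\cite{Ka01} of divisorial contractions to a smooth point. Once $g$ is pinned down, the rest is the deterministic two-ray game matched against the explicit links built in Section~\ref{sec:sarkisov}.
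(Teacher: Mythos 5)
Your proposal is correct and follows essentially the same route as the paper's proof: reduce via Proposition~\ref{codim2} to a divisorial contraction centred at a point, identify it as one of the three (weighted) blow-ups of $z$ by Lemma~\ref{buA1}(2), and then conclude that the link must be $\sigma^{-1}$, $\epsilon_a$ or $\epsilon_b$ since the continuation after the initial extraction is forced. The extra remarks on excluding type~(III)/(IV) links and on the determinacy of the two-ray game are only spelled out more explicitly than in the paper, which leaves them implicit.
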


\begin{proof}
  Since $\PP^3$ has Picard rank~$1$, the link $\Psi$ begins with a
  volume preserving divisorial contraction
  $f\colon (Z,D_Z)\to (\PP^3, D)$.  By Proposition~\ref{codim2} and
  Lemma~\ref{buA1}, the contraction $f$ is either the usual blow-up of
  the singular point $z$, or the weighted blow-up at $z$ with weights
  $(2,1,1)$ or $(1,2,1)$ with respect to the affine coordinates $x_0$,
  $x_1$ and $x_2$ in the affine chart $(x_3=1)$.  If $f=\sigma$ is the
  blow-up of $z$, then $\Psi=\sigma^{-1}$.  If $f$ is the weighted
  blow-up at $z$ with weights $(2,1,1)$ (respectively $(1,2,1)$), then
  $\Psi$ is the composition of $f$ with the contraction of the strict
  transform of the divisor $(x_0=0)$ (respectively $(x_1=0)$), as
  described in Example~\ref{link:1->3a}. Hence, $\Psi=\epsilon_b$ and
  $(X^\dagger,D^\dagger)/S^\dagger\cong (\PP(1^3,2),D^b_5)$
  (respectively $\Psi=\epsilon_a$ and
  $(X^\dagger,D^\dagger)/S^\dagger\cong (\PP(1^3,2),D^a_5)$).
\end{proof}

As before, we denote by $\sigma\colon X\to \PP^3$ the blow-up of
the singular point $z\in D$, by $D_X$ the (smooth) strict transform of $D$ in $X$, 
and by $\pi\colon X\to \P^2$ the fibration induced by the projection from $z$. 
In order to determine all the volume preserving Sarkisov links from $(X,D_X)/{\PP^2}$, 
we need a good understanding of the geometry of the smooth K3 surface $D_X$.

\begin{say}[The geometry of $D_X$]\label{D_A2}
Denote by $E\cong \P^2$ the exceptional divisor of the blow-up $\sigma:X\to \PP^3$. 
The intersection of $D_X$ with $E$ 
is the union of two $(-2)$-curves $e_0$ and $e_1$. 
These curves are mapped isomorphically via $\pi$ to the lines $(x_0=0)$ and $(x_1=0)$ in $\mathbb{P}^2$, respectively. 
Denote by $h$ the pull-back of the hyperplane class under
$\sigma_{|D_X} \colon D_X\to \mathbb{P}^3$. 
Then $\Pic(D_X)= \mathbb{Z}[h]\oplus \mathbb{Z}[e_0] \oplus \mathbb{Z}[e_1]$, 
and the intersection matrix of $\Pic(D_X)$ with respect to the basis $\big( [h],[e_0], [e_1]\big)$  is 
\[
\left(\begin{array}{ccc}
4 & 0 & 0 \\
0 & -2 & 1\\
0 & 1 & -2
\end{array}\right).
\]
By assumption, $D$ does not contain lines. Hence, the morphism
$\pi_{|D_X}\colon D_X\to \mathbb{P}^2$ is finite of degree $2$.  Set
$\alpha = (\pi_{|D_X})^*\mathcal{O}_{\mathbb{P}^2}(1)$, and denote by
$\tau \colon D_X\to D_X$ the involution associated to $\pi_{|D_X}$.
Then $\tau\colon D_X\to D_X$ maps the $(-2)$-curves $e_0$ and $e_1$ to
other $(-2)$-curves $e'_0$ and $e'_1$, respectively. Note that
\[
\alpha \ = \ (\pi_{|D_X})^* \mathcal{O}_{\mathbb{P}^2}(1) \ \sim \ h-e_0-e_1 \ \sim \ e_0+e'_0 \ \sim \ e_1+e'_1 \  .
\]
Thus  $e'_0\sim h-2e_0-e_1$, and $e'_1\sim h-2e_1-e_0$. 
The intersection matrix of $\Pic(D_X)$ with respect to the basis $\big( [\alpha] ,[e_0], [e_1]\big)$  is 
\[
\left(\begin{array}{ccc}
2 & 1 & 1 \\
1 & -2 & 1\\
1 & 1 & -2
\end{array}\right) .
\]
The following intersection numbers will be useful later on
\[
\left\lbrace\begin{array}{l}
\alpha\cdot e_0 = \alpha\cdot e_1 = \alpha\cdot e'_0 = \alpha\cdot e'_1 = 1,\\ 
e_0\cdot e'_1 = e'_0\cdot e_1 =  0,\\ 
e_0\cdot e_0' = e_1\cdot e_1'  = 3.
\end{array}\right. 
\]
The $(-2)$-curves $e_0$, $e_1$, $e'_0$ and $e'_1$ generate extremal rays of $\NE(D_{X})$. 
Next we show that these are all:
\begin{equation}\label{mc2}
\NE(D_{X}) \ = \  \big\langle [e_0], [e_1], [e'_0], [e'_1]\big\rangle_+. 
\end{equation}
Indeed, let $C\subset D_X$ be an irreducible curve different from $e_0$, $e_1$, $e'_0$ and $e'_1$, and write 
$C\sim dh-m_0e_0-m_1e_1$. By intersecting $C$ with $h$, $e_0$, $e_1$, $e'_0$ and $e'_1$, we get that 
$$
\left\lbrace
\begin{array}{l}
 d > 0, \\
0 \leq m_0 \leq \frac{4}{3}d,\\ 
0 \leq m_1 \leq \frac{4}{3}d.
\end{array} 
\right.
$$
Therefore, we may write
$$
C\ \equiv \ \frac{d}{2}{e'}_0 \ +\ \frac{d}{2}{e'}_1\ +\ \left(\frac{3}{2}d-m_0\right)e_0\ +\ \left(\frac{3}{2}d-m_1\right)e_1,
$$
with $\frac{3}{2}d-m_0> 0$ and $\frac{3}{2}d-m_1> 0$.  This gives
\eqref{mc2}.
\end{say}

\begin{Remark}\label{Cox_coords_D}
Fix homogeneous coordinates $x_0, x_1, x_2, x_3, x$ on $X\cong \FF_1^3$ with weights:
\[
\begin{array}{ccccc}
x_0 & x_1 & x_2 & x_3 & x \\
\hline
1 & 1 & 1 & 0 & -1 \\
0 & 0 & 0 & 1 &  1 
\end{array}
\]
as in Table~1. In these coordinates, $\sigma \colon X\to \PP^3$ is given by
$$
(x_0, x_1, x_2, x_3,x) \mapsto (xx_0, xx_1,  xx_2, x_3)
$$
while $\pi \colon X\to \PP^2$ is given by $(x_0, x_1, x_2, x_3,x) \mapsto (x_0,x_1,x_2)$.
The equation of $D_X\subset X$ is
\[
x_0x_1x_3^2+Bx_3x+Cx^2 =0
\]
and the $(-2)$-curves $e_0$, $e_1$, $e'_0$ and $e'_1$ of the
discussion above are given by:
\[
\begin{array}{l}
e_0 = \{x = x_0 = 0\},\\ 
e_1 = \{x = x_1 = 0\},\\ 
e'_0 = \{x_0 = Bx_3+Cx = 0\},\\ 
e'_1 = \{x_1 = Bx_3+Cx = 0\}.
\end{array} 
\]
\end{Remark}

We are ready to determine the volume preserving Sarkisov links from $(X,D_X)/{\PP^2}$, and, more generally, from any 
Mf CY pair obtained from $\pi \colon (X,D_X) \to \PP^2$ after finitely
many volume preserving Sarkisov links of type~(II). 

\begin{lem}
  \label{lem:Link2}
Suppose that $\pi^\prime \colon (X^\prime, D^\prime)\to \PP^2$ is a Mf CY pair 
obtained from $\pi \colon (X,D_X) \to \PP^2$ after finitely many volume preserving Sarkisov links of type~(II).
Let $\Psi \colon (X^\prime, D^\prime)/\PP^2\dasharrow (X^\dagger,
D^\dagger)/S^\dagger$ be a volume preserving Sarkisov link. Then one of the following holds:
\begin{enumerate}[(a)]
\item $\Psi$ is a Sarkisov link of type~(II);
\item $(X^\prime,D^\prime)/\PP^2$ is isomorphic to $(X,
  D_X)/\PP^2$ and $\Psi=\sigma$;
\item $(X^\prime,D^\prime)/\PP^2$ is isomorphic to object~$2^a$ and
  $\Psi = \chi^a$;
\item $(X^\prime,D^\prime)/\PP^2$ is isomorphic to object~$2^b$ and
  $\Psi=\chi^b$. 
\end{enumerate}
\end{lem}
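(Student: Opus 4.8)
The plan is to imitate, in the $A_2$ setting, the second half of the proof of Theorem~B(1), using the geometry of $D_X$ recorded in Paragraph~\ref{D_A2} in place of the $A_1$ data, and then to identify the (weak) Fano $\PP^1$-bundles that turn up with objects~$2$, $2^a$, $2^b$ by a lattice computation.

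\textbf{Step 1 (the shape of $(X',D')$).} Running the argument of the proof of Theorem~B(1) almost verbatim --- replacing the intersection matrix \eqref{intersection_matrix} by the $A_2$-lattice of Paragraph~\ref{D_A2} and the two $(-2)$-curves $e,e'$ there by the four curves $e_0,e_1,e_0',e_1'$ --- one first shows that $\pi'\colon X'\to\PP^2$ is a $\PP^1$-bundle (so in particular $X'$ is smooth and $\rho(X')=2$), that $D'=D_{X'}$ is the strict transform of $D_X$, a smooth $K3$ surface isomorphic to $D_X$ and containing no fibre of $\pi'$, and that $\NEbar(D')=\langle e_0,e_1,e_0',e_1'\rangle_+$. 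The link $\Psi$ is then initiated by the contraction of the unique extremal ray $R=\bR_{\geq 0}[\gamma]$ of $\NEbar(X')$ distinct from the $\pi'$-fibre class; if that contraction is the first half of a Sarkisov link of type~(II) we are in case~(a), so from now on we assume $\Psi$ is of type~(I), (III) or~(IV).

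\textbf{Step 2 (no type~(I), and $-K_{X'}$ is nef).} A link of type~(I) from $X'/\PP^2$ would end with a Mori fibre space $X^\dagger\to S^\dagger$ together with a morphism $S^\dagger\to\PP^2$; counting Picard ranks forces $\dim S^\dagger=2$ and $S^\dagger\to\PP^2$ birational, so some fibre of $X^\dagger\to\PP^2$ contains a divisor. This is impossible: by Proposition~\ref{divextter} the link begins with the blow-up $Z\to X'$ of a \emph{curve} contained in the smooth surface $D'$, hence a curve dominating no fibre of $\pi'$, so all fibres of $Z\to\PP^2$ --- and their strict transforms in $X^\dagger$ under the intervening flips, flops and antiflips, which produce no new divisors --- are one-dimensional. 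Next, exactly as in Theorem~B(1): if $-K_{X'}\cdot\gamma<0$ then $\gamma\subset D'$, so $\gamma$ is one of $e_0,e_1,e_0',e_1'$; a short discrepancy computation (using $K_{D'}=0$) shows the contraction of $R$ is small, so $\Psi$ begins with an antiflip $X'\dasharrow Y^-$, and Lemma~\ref{lem:inverse flips} forces $k:=-D_{X'}\cdot\gamma=1$. In that remaining case $Y^-$ acquires an isolated $\tfrac12(1,1,1)$ point lying on $D_{Y^-}$, and continuing the link one lands in the situation of Lemma~\ref{lem:BadAntiflips}, whose conclusion contradicts Definition~\ref{dfn:4}. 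Hence $-K_{X'}\cdot\gamma\geq 0$; together with $-K_{X'}\cdot(\text{fibre})=2>0$ this means $-K_{X'}$ is nef, i.e.\ $X'$ is Fano or strictly weak Fano.

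\textbf{Step 3 (classification and identification of $\Psi$).} Thus $X'=\PP(\cE)$ with $\cE$ a normalised rank-two bundle from one of the two lists of Theorem~\ref{thm:weakFanoVectorBundles}; the bundle $\cO_{\PP^2}(1)\oplus\cO_{\PP^2}(-2)$ is excluded because its anticanonical contraction is divisorial, not small. For the remaining bundles the restriction $r\colon\Pic(X')\to\Pic(D')$ is injective (Proposition~\ref{pic} together with the Lefschetz $(1,1)$-theorem), with image the rank-two sublattice whose Gram matrix in the basis $\bigl(r(\pi'^*\cO(1)),r(\xi)\bigr)$ is the one displayed in \eqref{restriction_matrix}; moreover $r(\pi'^*\cO(1))=\alpha=h-e_0-e_1$ because $\pi'_{|D'}$ has degree~$2$, and $-K_{X'}|_{D'}=2\,r(\xi)+(3-c_1)\alpha$ must lie in the ample cone (resp.\ nef cone) of the $K3$ surface $D'$, i.e.\ be positive (resp.\ non-negative) on each of $e_0,e_1,e_0',e_1'$. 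A case-by-case inspection of the two lists --- existence of a class $r(\xi)\in\Pic(D')\setminus\ZZ\alpha$ with the prescribed intersections, positivity of $-K_{X'}|_{D'}$, and, in the strictly weak Fano cases, analysis of the anticanonical flop to rule out the bundles that survive the numerical test --- leaves only $\cE=\cO\oplus\cO(-1)$, so $X'\cong\FF_1^3$, and $\cE=\cO(1)\oplus\cO(-1)$, so $X'\cong\FF_2^3$. In the first case the unique non-fibre extremal contraction of $\FF_1^3$ is $\sigma$, which, being volume preserving, contracts $D'$ onto $D$; hence $(X',D')$ is object~$2$ and $\Psi=\sigma$, case~(b). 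In the second case the unique non-fibre extremal contraction of $\FF_2^3$ is the divisorial contraction to $\PP(1^3,2)$; according to which line of the tangent cone of $D$ at $z$ it extracts, $(X',D')$ is object~$2^a$ or $2^b$ and $\Psi=\chi^a$ or $\chi^b$, case~(c) or~(d). In either case there is no type~(IV) link from $X'$, since its non-fibre ray is contracted divisorially. This exhausts the possibilities.

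\textbf{Main obstacle.} Two points require real work beyond the $A_1$ bookkeeping. First, the exclusion in Step~2 of the value $-K_{X'}\cdot\gamma=-1$: in the $A_1$ case \eqref{intersection_matrix} makes $D_{X'}\cdot\gamma$ automatically even, but here one must genuinely follow the antiflip into the singular world and invoke Lemma~\ref{lem:BadAntiflips}. Second, the enumeration in Step~3 is not settled by the existence of a numerically compatible $r(\xi)$ alone --- bundles such as $T_{\PP^2}(-2)$ and the $\cI_p(-1)$-extension of List~$2$ pass that test --- so one must additionally use the positivity of $-K_{X'}|_{D'}$ on the four $(-2)$-curves and, for the strictly weak Fano members, a direct analysis of the anticanonical flop to see that it does not produce a genuinely new Mf CY pair. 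This second point is where I expect most of the technical effort to go.
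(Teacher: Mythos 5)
Your overall architecture is the paper's: the same Step~1 induction, the same exclusion of links of type~(I), the same reduction to showing $-K_{X'}$ nef, and the same lattice classification of weak Fano $\PP^1$-bundles via \eqref{restriction_matrix}. The genuine gap is in Step~2, precisely at the point you single out as the main obstacle, and the fix you propose is not the one that works. First, you treat the $K$-negative extremal ray as if its exceptional locus were a single irreducible curve $\gamma\in\{e_0,e_1,e_0',e_1'\}$ to which Lemma~\ref{lem:inverse flips} applies. But in the $A_2$ situation it can happen that $[e_0]=[e_1]$ in $H_2(X')$, so that the small contraction contracts the connected chain $e_0\cup e_1\subset D'$; then the modification to be performed is the antiflip of the whole chain, about which Lemma~\ref{lem:inverse flips} says nothing. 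This is exactly why the paper proves Lemma~\ref{lem:A2-nbds}: it gives an explicit toric model of the neighbourhood of $e_0\cup e_1$, and performing the antiflip in that model produces a curve of $A_2$-singularities (a $\tfrac13(0,1,2)$ locus), which is what contradicts Definition~\ref{dfn:4}. Your proposal never addresses this case, and it cannot be absorbed into the single-curve analysis.

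Second, in the case where $\gamma$ is a connected component of the exceptional locus and $k=-D'\cdot\gamma=1$, your claim that ``continuing the link one lands in the situation of Lemma~\ref{lem:BadAntiflips}'' is unsubstantiated: the antiflip of a curve with normal bundle $\OO(-1)\oplus\OO(-2)$ is a perfectly admissible terminal antiflip (it creates a $\tfrac12(1,1,1)$ point), so no contradiction is available at that stage, and you give no verification that the subsequent $2$-ray game ever reaches the configuration of Lemma~\ref{lem:BadAntiflips} (a node of the boundary at a $\tfrac12(1,1,1)$ point with $-S\cdot\Gamma=k/2$, $k\geq 3$ odd). The paper disposes of this case without entering the singular world at all: writing $r(\xi)=a\alpha+b_0e_0+b_1e_1$, the equality $D'\cdot e_0=6-5b_0+b_1=-1$ together with $D'\cdot e_1\geq -1$ and $D'\cdot e_1'\geq -1$ (which hold because $[e_1]$ and $[e_1']$ lie in $[e_0]+\RR_{\geq 0}[f]$ with $f$ a fibre) forces $5b_0-b_1=7$ and $-7\leq b_0-5b_1\leq 7$, a system with no integer solutions; so the case is numerically vacuous. (Lemma~\ref{lem:BadAntiflips} is needed later, in the proof of Lemma~\ref{lem:Link3}, not here.) A smaller omission: in Step~3 your identification of objects $2^a$, $2^b$ ``according to which line of the tangent cone is extracted'' skips the computation the paper does with the composite map $\PP^3\dasharrow\PP(1,1,1,2)$ --- available only because $(X',D')$ is reached from $(X,D_X)$ by type~(II) links --- which is what shows the image quintic is literally $D_5^a$ or $D_5^b$, hence that $\Psi=\chi^a$ or $\chi^b$ and not some other divisorial contraction.
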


\begin{proof}
Let $\Phi \colon (X,D_X)/\PP^2 \dasharrow (X^\prime, D_{X^\prime})/\PP^2$ be a composition 
of finitely many (possibly none) volume preserving Sarkisov links of type~(II):
\[
  \begin{tikzpicture}[xscale=1.5,yscale=-1.2]
    \node (A0_1) at (1, 0) {$Z_{i-1}$};
    \node (A0_2) at (2, 0) {$Z_{i}$};
    \node (A1_0) at (0, 1) {$(X_{i-1}, D_{i-1})$};
    \node (A1_3) at (3, 1) {$(X_{i},D_{i})$};
    \node (A2_0) at (0, 2) {$\mathbb{P}^2$};
    \node (A2_3) at (3, 2) {$\mathbb{P}^2$};
    \path (A0_1) edge [->,swap]node [auto] {$\scriptstyle{g_{i-1}}$} (A1_0);
    \path (A1_3) edge [->]node [auto] {$\scriptstyle{\pi_{i}}$} (A2_3);
    \path (A2_0) edge [-,double distance=1.5pt]node [auto] {$\scriptstyle{}$} (A2_3);
    \path (A1_0) edge [->,swap]node [auto] {$\scriptstyle{\pi_{i-1}}$} (A2_0);
    \path (A0_2) edge [->]node [auto] {$\scriptstyle{g_{i}}$} (A1_3);
    \path (A0_1) edge [->,dashed]node [auto] {$\scriptstyle{\varphi_{i}}$} (A0_2);
    \path (A1_0) edge [->,dashed]node [auto] {$\scriptstyle{\Phi_i}$} (A1_3);
  \end{tikzpicture}
  \]
Here  $g_{i-1}\colon Z_{i-1}\rightarrow X_{i-1}$ and $g_{i}\colon Z_{i}\to X_{i}$ are
 divisorial contractions centered at curves $\cC_{i-1}\subset D_{i-1}$
and $\cC_{i}\subset D_{i}$, and  $\varphi_i\colon Z_{i-1}\map Z_{i}$ is a sequence of flips, flops
  and antiflips.
Notice that $g_{i}\colon Z_{i}\to X_{i}$ contracts the strict transform of the surface $\pi_{i-1}^{-1}(\pi_{i-1}(\cC_{i-1}))$ onto $\cC_{i}$.

\textsc{Step 1.} We  prove the following facts about $(X^\prime,
D_{X^\prime})/\PP^2$:
\begin{enumerate}[(a)]
  \item $\pi^\prime \colon X^\prime \to
    \PP^2$ is a $\PP^1$-bundle;
  \item the induced birational map $D_X\dasharrow D_{X^\prime}$ is an
    isomorphism;
  \item no fibre of $\pi^\prime$ is contained in $D_{X^\prime}$. 
\end{enumerate}
To prove this, we proceed by induction on $i$ as in the proof of Theorem~B.
Note that at each step, the curve $\cC_{i-1}\subset D_{i-1}$ is mapped birationally to its images under $\pi_{i-1}$, since $D_{i}$ is normal by Lemma~\ref{D_normal}.
Thus, $\pi_{i}\colon X_i\to \P^2$ is a $\P^1$-bundle over the complement of a finite subset of $\mathbb{P}^2$. So (a) follows from \cite[Theorem 5]{AR14}.

Next we show (b). Note that $D_{i-1}$ does not contain any fiber of $\pi_i$, and thus $D_{i-1}\dasharrow D_{i}$ does not contract any curve. By Zariski's Main Theorem $D_{i}\to D_{i-1}$ is a morphism. Adjunction yields that $K_{D_{i-1}}\sim 0$ and $K_{D_i}\sim 0$. Since $D_{i-1}$ is smooth, we conclude that $D_{i}\to D_{i-1}$ is an isomorphism. So (b) is proven, and (c) follows from (b).

\smallskip

\textsc{Step 2.} Let
$\Psi \colon (X^\prime, D_{X^\prime})/\PP^2\dasharrow (X^\dagger,
D^\dagger)/\PP^2$ be a volume preserving Sarkisov link.  The link
$\Psi$ cannot be a link of type~(I) --- for the same reasons as in
the proof of Theorem~B(1) --- and we suppose it is not of type~(II).
So the link $\Psi$ starts with a birational
modification along an extremal ray $R\subset \NE(X^\prime)$. Let
$\Gamma \subset X^\prime$ be an irreducible curve such that
$R=\bR_{\geq 0}[\Gamma]$. In this step, we show that
$-K_{X^\prime}\cdot \Gamma \geq 0$. This implies in particular that $X^\prime$ is weak Fano. 

\medskip

Suppose for a contradiction that
$-K_{X^\prime}\cdot \Gamma = D_{X^\prime}\cdot \Gamma <0$. It follows
that the extremal contraction $f_R\colon X^\prime \to W$ is small, and
$\Gamma \subset D_{X^\prime}$ is contracted to a point. By
Paragraph~\ref{D_A2},
$\NE(D_{X^\prime}) \ = \ \big\langle [e_0], [e_1], [e'_0],
[e'_1]\big\rangle_+$, and thus $\Gamma$ must be one of the curves
$e_0, e_1, e'_0, e'_1$. By relabelling these curves if necessary, we
may assume that $\Gamma =e_0$. We discuss two cases in turn:

\begin{enumerate}[(i)]
\item $\Gamma$ is not a connected component of the exceptional set of the
  contraction $f_R$;
\item $\Gamma$ is a connected component of the exceptional set of the contraction $f_R$.
\end{enumerate}

\subsection*{Case (i): $\Gamma=e_0$ is not a connected component of the exceptional set of $f_R$}
  The exceptional set of $f_R$ must be $e_0\cup e_1$. The class of $e_1$ in $H_2(X^\prime)$ is
proportional to $e_0$ and then it is immediate that actually
$[e_1]=[e_0]\in H_2(X^\prime)$. It follows that 
\[D_{X^\prime} \cdot e_0=D_{X^\prime}\cdot e_1=-a<0.\]
By Lemma~\ref{lem:A2-nbds},  the extremal neighbourhood around
$e_0\cup e_1\subset D_{X^\prime} \subset X^\prime$ is isomorphic to the analytic germ
  around the curve
  \[
    \Gamma_0\cup \Gamma_1=(x_0=x_2=0) \cup (x_0=x_3=0)
  \]
in the geometric quotient $\CC^5 /\!\!/\CC^\times$ for the action given by the weights:
\[
  \begin{array}{ccccc}
x_0 & x_1 & x_2 & x_3 & x_4 \\
\hline
    -a & 1 & 1 & 0 & -2 \\
    -a &-2& 0 & 1 & 1 \\
  \end{array} ,
\]
where the stability condition is taken in the quadrant $\langle (1, 0), (0,1) \rangle_+$. In these coordinates, $D_{X^\prime}$ is given by $(x_0=0)$. To perform the antiflip $X^\prime \dasharrow X^-$ we need to make
$D_{X^\prime}$ ample, that is, we change the stability condition to
$D_{X^\prime} = (-a,-a)$. The new irrelevant ideal is $(x_0, x_1x_4)$;
thus $X^-$ is covered by the charts $\{x_0\neq 0\}$ and
$\{x_1\neq 0, x_4\neq 0\}$, and we see by looking at the chart
$\{x_1\neq 0, x_4\neq 0\}$ that the antiflip $X^-$ has strictly canonical
singularities of type $\frac{1}{3}(0,1,2)$ (it has a curve of
$A_2$-singularities), a contradiction.

\medskip

\subsection*{Case~(ii): $\Gamma=e_0$ is a connected component of the exceptional set of $f_R$} 
It follows from Lemma~\ref{lem:inverse flips}(2) that $-K_{X^\prime}\cdot \Gamma = D_{X^\prime}\cdot
e_0=-1$ and $\Gamma=e_0$ has normal bundle $N_{\Gamma/X^\prime} \cong \OO(-2) \oplus \OO(-1)$. 
It follows from \textsc{Step~1}
that $X'=\mathbb{P}(\cE)$ for some rank two vector bundle $\cE$ on
$\P^2$. After twisting $\cE$ with a line bundle if necessary, we
may assume that $c_1 \in \{0,-1\}$.
Write $L' =\big[(\pi')^{*}\big(\O_{\P^2}(1)\big)\big]$ and $\xi = \big[\O_{\mathbb{P}(\cE)}(1)\big]$. 
By Lemma~\ref{lem:canonical_in_PP(E)}, 
 \[
D_{X^\prime} \ \sim \ -K_{X^\prime} \ \sim \ (3-c_1)L'+2\xi.
 \]
We compare the lattice $\Pic(D_{X^\prime})$
with the sublattice obtained as the image of the restriction homomorphism
\[
r\colon  \ \Pic({X^\prime}) \ \to \ \Pic(D_{X^\prime}).
\]
As in the proof of Theorem~B(1), one computes that the intersection matrix of $r\big(\Pic({X^\prime})\big)$ in
the restricted basis $(r(L'),r(\xi))$ is 
$$
\left(\begin{array}{cc}
2 & c_1+3 \\ 
c_1+3 & c_1^2+3c_1-2c_2
\end{array}\right).
$$
We will show that this cannot be a sublattice of $\Pic(D_{X^\prime})\cong \Pic(D_{X})$.
Suppose otherwise, and write
\[
r(\xi) = a \alpha + b_0 e_0 +b_1 e_1
\]
for some $a,b_0, b_1\in \ZZ$. Intersecting with $r(L')=\alpha$ we get:
\[
c_1+3=r(L')\cdot r(\xi)=2a+b_0+b_1
\]
and hence 
\[
r(D_{X^\prime}) = 2r(\xi) + (3-c_1)r(L)= 2r(\xi) + (3-c_1)\alpha =
(6-b_0-b_1)\alpha + 2b_0e_0 +2b_1e_1\ .
\]
From this we conclude that
$$
  D_{X^\prime} \cdot e_0 = 6 -5b_0 +b_1=-1.
$$
On the other hand, $\NE (X^\prime)=\langle e_0, f\rangle_+$ where $f$ is a fibre of
$\pi$, and $e_0, f$ are a basis of $H_2(X^\prime)$. It follows, by considering the projection to $\mathbb{P}^2$, that in
$H_2(X^\prime)$ we can write:
\[
  e_1=e_0+\lambda f\;\text{and}\;
  e_1^\prime = e_0+ \mu f, \; \text{for some}\; \lambda, \mu \geq 0.
\]
By computing intersection numbers, we get:
\begin{align*}
  D_{X^\prime} \cdot e_1 & =  6+b_0 -5b_1 \geq -1,\\
  D_{X^\prime} \cdot e_1^\prime & =  6-b_0 +5b_1 \geq -1. 
\end{align*}
Combining these equations we get
\begin{align*}
  5b_0- b_1 & =7,\\
  -7 \leq b_0-5b_1 & \leq 7,
\end{align*}
which do not have common integer solutions, a contradiction.
We conclude that $-K_{X^\prime}\cdot \Gamma \geq 0$, and thus $X^\prime=\PP(\cE)$ is weak Fano. 

\medskip

\textsc{Step~3.}  We determine the rank~$2$ vector bundles $\cE$ on $\PP^2$ for which
$\PP(\cE)$ is weak Fano and contains an anti-canonical divisor isomorphic to $D_X$. 
Theorem~\ref{thm:weakFanoVectorBundles} and the two lists that accompany it
show the rank~$2$ vector bundles $\cE$ on $\PP^2$ with
$c_1 \in \{0,-1\}$ such that $\PP(\cE)$ is Fano or weak Fano. We
continue with the set-up and notation of the proof of Case (ii) of
\textsc{Step~2} above. In oder to determine the possible vector
bundles $\cE$, we shall determine the possible values of
$c_2 = \frac{1}{2}(c_1^2+3c_1 -r(\xi)^2)$.  Write $r(\xi)$
in terms of the basis $\big( [\alpha] ,[e_0], [e_1]\big)$ of
$\Pic(D_X)$:
\[
r(\xi) = \ a\alpha + b_0e_0 + b_1e_1,
\]
for some $a,b_0,b_1\in \ZZ$. Intersecting with $r(L')=\alpha$ we get:
$$
c_1 +3 = 2a + b_0+b_1,
$$
and hence
\[
r(D_{X^\prime})=(6-b_0-b_1)\alpha
+ 2b_0e_0+2b_1e_1.\]
Since $-K_{X^\prime} \sim D_{X^\prime}$ is nef we have:
\begin{align*}
D_{X^\prime}\cdot e_0&=  -5b_0 +b_1 +6 \geq 0,\\ 
D_{X^\prime}\cdot e_0^\prime & =  5b_0 -b_1 +6 \geq 0,\\ 
D_{X^\prime}\cdot e_1& = b_0 - 5b_1 +6  \geq 0,\\ 
D_{X^\prime}\cdot e_1^\prime &= -b_0 + 5b_1 +6 \geq 0.
\end{align*}

The region of the $(b_0,b_1)$-plane defined by these inequalities is
pictured here together with its integral points:
\[
\begin{tikzpicture}[line cap=round,line join=round,>=triangle 45,x=1.8cm,y=1.8cm]
\draw[->,color=black] (-2,0) -- (2,0);
\foreach \x in {-2,-1.5,-1,-0.5,0.5,1,1.5}
\draw[shift={(\x,0)},color=black] (0pt,2pt) -- (0pt,-2pt) node[below] {\tiny $\x$};
\draw[->,color=black] (0,-2) -- (0,2);
\foreach \y in {-2,-1.5,-1,-0.5,0.5,1,1.5}
\draw[shift={(0,\y)},color=black] (2pt,0pt) -- (-2pt,0pt) node[left] {\tiny $\y$};
\draw[color=black] (0pt,-10pt) node[right] {\tiny $0$};
\clip(-2,-2) rectangle (2,2);
\draw [line width=1pt] (-1,1)-- (1.5,1.5);
\draw [line width=1pt] (-1,1)-- (-1.5,-1.5);
\draw [line width=1pt] (-1.5,-1.5)-- (1,-1);
\draw [line width=1pt] (1,-1)-- (1.5,1.5);
\draw (-0.01,1.8) node[anchor=north west] {$\scriptstyle{b_1}$};
\draw (1.53,0.28) node[anchor=north west] {$\scriptstyle{b_0}$};
\begin{scriptsize}
\fill [color=black] (-1,1) circle (2.5pt);
\draw[color=black] (-1.3,1.0) node {$\scriptstyle{(-1, 1)}$};
\fill [color=black] (1,-1) circle (2.5pt);
\draw[color=black] (1.34,-0.89) node {$\scriptstyle{(1, -1)}$};
\fill [color=black] (-1,-1) circle (2.5pt);
\draw[color=black] (-0.85,-0.89) node {$\scriptstyle{(-1, -1)}$};
\fill [color=black] (1,1) circle (2.5pt);
\draw[color=black] (1.13,1.1) node {$\scriptstyle{(1, 1)}$};
\fill [color=black] (0,0) circle (2.5pt);
\draw[color=black] (0.23,0.11) node {$\scriptstyle{(0, 0)}$};
\fill [color=black] (-1,0) circle (2.5pt);
\fill [color=black] (1,0) circle (2.5pt);
\fill [color=black] (0,1) circle (2.5pt);
\fill [color=black] (0,-1) circle (2.5pt);
\end{scriptsize}
\end{tikzpicture}
\]

If $(b_0,b_1)=(0,0)$, then one computes that $a=1$, $c_1=-1$ and $c_2=-2$. 
In this case we must have $\cE\cong
\mathcal{O}_{\P^2}(1)\oplus\mathcal{O}_{\P^2}(-2)$, which is case (8)
of List~2. This leads to a bad link: the nonfibering contraction of
$\P\big(\mathcal{O}_{\P^2}(1)\oplus\mathcal{O}_{\P^2}(-2)\big)$
contracts a divisor to a strictly canonical singularity.

\smallskip 

If $(b_0,b_1)\in\big\{(1,-1), (-1,1)\big\}$, then one computes that
$a=1$, $c_1=-1$ and $c_2=1$.  In this case, $-K_{X'}\cdot e = 0$ for
some $e\in\{e_0,e_1,e'_0,e'_1\}$, and thus $X'$ is weak Fano but not
Fano.  So $\cE$ must be as in case (9) of List~2.  Again this is not
possible because the nonfibering contraction of this $\P^1$-bundle is
divisorial and not small, so it cannot lead to a Sarkisov link.

\smallskip 

If $(b_0,b_1)\in\big\{(1,1), (-1,-1)\big\}$, then one computes that
$r(\xi) =(e_0+e_1)$ or $(e'_0+e'_1)$, $c_1 =-1$ and $c_2 =0$.  In this
case, $\cE\cong \mathcal{O}_{\P^2}\oplus \mathcal{O}_{\P^2}(-1)$, and
$X'$ is the blow-up of $\P^3$ at a point. The contraction $X'\to \P^3$
maps $D_{X'}$ to $D$, contracting either $e_0\cup e_1$ or
$e'_0\cup e'_1$ to the $A_2$ singular point of $D$.

\smallskip 

If $(b_0,b_1)\in\big\{(0,1), (0,-1), (1,0), (-1,0)\big\}$, then one
computes that $r(\xi) =\alpha +e$ for some
$e\in\{e_0,e_1,e'_0,e'_1\}$, $c_1=0$ and $c_2=-1$.  In this case,
$\cE\cong \mathcal{O}_{\P^2}(1)\oplus \mathcal{O}_{\P^2}(-1)$, and
$X'$ is the blow-up of $\P(1,1,1,2)$ at its singular point $q=[0:0:0:1]$.  
This contraction $\chi\colon X'\to \P(1,1,1,2)$ is induced by the linear
system
$$
\big|L'+\xi\big| \ = \ \big|(\chi)^*\O_{\P}(2)\big|.
$$
It contracts the section $E'\sim \xi-L'$ of $\pi'$ containing $e$ to
the singular point $q\in \P(1,1,1,2)$.  The image of $D_{X'}$ is a
quintic hypersurface $D_5\subset \P(1,1,1,2)$. The restriction
$\chi_{|D_{X'}}\colon D_{X'}\to D_5$ contracts $e$ to $q$, and is an
isomorphism elsewhere.  We now compute the equation of $D_5$.  By
assumption, $(X^\prime, D^\prime)/{\PP^2}$ is obtained from
$(X,D_X)/{\PP^2}$ after finitely many volume preserving Sarkisov links
of type~(II):
$$
\begin{tikzcd}
                                                      & X \arrow[rr, "\phi", dashed] \arrow[ld, "\sigma"'] \arrow[d] &  & X' \arrow[rd, "\chi"] \arrow[d] &                       \\
\mathbb{P}^3 \arrow[rrrr, "\psi", dashed, bend right=20] & \mathbb{P}^2                        \arrow[rr,equal]  &  & \mathbb{P}^2                  & {\mathbb{P}(1,1,1,2).}
\end{tikzcd} 
$$
Hence, the composed birational map $\psi=\chi\circ\phi\circ (\sigma^{-1})\colon \mathbb{P}^3\dasharrow\mathbb{P}(1,1,1,2)$ is 
given in coordinates by $\psi(x_0,x_1,x_2,x_3) = (x_0,x_1,x_2,x_3L+Q)$, where $L\in \mathbb{C}[x_0,x_1,x_2]$ is a linear form and 
$Q\in \mathbb{C}[x_0,x_1,x_2]$ is a quadratic form. After a change of variables in $\mathbb{P}(1,1,1,2)$, we may assume that $\psi$ is 
given by $\psi(x_0,x_1,x_2,x_3) = (x_0,x_1,x_2,x_3L)$, and so 
$\psi^{-1}\colon \mathbb{P}(1,1,1,2) \to  \mathbb{P}^3$ is given in coordinates by
$\psi^{-1}(x_0,x_1,x_2,y) = (x_0L,x_1L,x_2L,y)$.     

The strict transform of $D\subset\mathbb{P}^3$ has equation dividing
$$
(x_0x_1y^2+yLB+L^2C = 0)\subset\mathbb{P}(1,1,1,2).
$$
For it to be a quintic, there are just two possibilities: either $L = x_0$ and  $D_5=D_5^b$, or $L = x_1$ and $D_5=D_5^a$.
\end{proof}

\begin{lem}\label{lem:Link3}
The only volume preserving Sarkisov links 
\[
  \Psi \colon (\PP(1,1,1,2),D_5^a)/\Spec \CC \dasharrow
  (X^\dagger,D^\dagger)/S^\dagger
\]
are the maps $\epsilon_a^{-1},(\chi^a)^{-1}, \phi^a, \psi^a$ described
in Examples~\ref{link:1->3a}, \ref{link:3a->2sharp}, \ref{link:3b->4}
and \ref{link:3a->5}.

Similarly, the only volume preserving Sarkisov links 
\[\Psi \colon (\PP(1,1,1,2),D_5^b)/\Spec \CC \dasharrow
  (X^\dagger,D^\dagger)/S^\dagger\] are the maps
$\epsilon_b^{-1},(\chi^b)^{-1}, \phi^b, \psi^b$ described in
Examples~\ref{link:1->3a}, \ref{link:3a->2sharp}, \ref{link:3b->4} and
\ref{link:3a->5}.
\end{lem}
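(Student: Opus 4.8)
Since the coordinate involution exchanging $x_0$ and $x_1$ interchanges the objects $3^a$ and $3^b$ and is compatible with all the data in sight, it suffices to treat $(\PP(1^3,2),D_5^a)$. The plan is to follow the pattern of the proofs of Theorems~B and~C. Because $\PP(1^3,2)$ has Picard rank $1$, the link $\Psi$ must start with a volume preserving terminal divisorial contraction $f\colon (Z,D_Z)\to(\PP(1^3,2),D_5^a)$; by Proposition~\ref{divextter} the centre of $f$ lies on $D_5^a$ and $D_Z$ is the strict transform of $D_5^a$, so that the volume preserving condition forces $a(E,K_{\PP(1^3,2)}+D_5^a)=0$ for the exceptional divisor $E$. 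One then enumerates the possible $f$ and, for each, runs the two-ray game on $Z$ (which has $\rho(Z)=2$) until it terminates, either in a Mori fibre space or in a divisorial contraction onto a terminal variety.

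For the enumeration I would first record that $D_5^a$ is a K3 surface whose only singularity is a Du Val point of type $A_1$, located at the quotient singularity $q=[0{:}0{:}0{:}1]$ of $\PP(1^3,2)$, and that $\NE(D_5^a)$ is read off from the description in Paragraph~\ref{D_A2} through the contraction $D_X\to D_5^a$. The centre of $f$ is therefore either the point $q$ or a curve $\Gamma\subset D_5^a$. If the centre is $q$, then using the classification of divisorial contractions over the terminal cyclic quotient singularity $\frac12(1,1,1)$ together with $a(E,K_{\PP(1^3,2)}+D_5^a)=0$, one finds that $f$ must be the economic blow-up of $q$, whose total space is $\FF_2^3=$ object~$2^a$; this already carries the Mori fibration over $\PP^2$, so $\Psi=(\chi^a)^{-1}$. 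If the centre is a curve $\Gamma$, then necessarily $\Gamma\not\ni q$ (otherwise $Z$ fails to be terminal over the preimage of $q$), $D_5^a$ is smooth along $\Gamma$, and $f$ is the blow-up of $\Gamma$, with $D_Z$ the strict transform of $D_5^a$.

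For $f$ the blow-up of a curve $\Gamma\subset D_5^a$, one extremal ray of $\NE(Z)$ is contracted by the $\PP^1$-bundle $E\to\Gamma$, and the two-ray game then crosses $\Mov(Z)$ by a chain of flops, flips and antiflips. The admissible antiflips are exactly those allowed by Lemmas~\ref{lem:trivial_neighbourhoods}, \ref{lem:BadAntiflips} and~\ref{lem:A2-nbds}: requiring that the total space stay terminal forces bounds on the intersection numbers of $\Gamma$ with the $(-2)$-curves generating $\NE(D_5^a)$. Combining these bounds with the classification of rank-two (weak) Fano $\PP^1$-bundles over $\PP^2$ (Theorem~\ref{thm:weakFanoVectorBundles}), used as in the proofs of Theorems~B and~C, I expect to conclude that $\Gamma$ is one of: the degree-$1$ line $\{x_1=y=0\}$ always contained in $D_5^a$; a member of the $3$-parameter family $\{y-x_1L=0\}\cap\{x_0x_1L^2+BL+C=0\}$; or a member of the $6$-parameter family $\{x_0y+B+x_1(y+Q)=0\}\cap\{y(y+Q)-C=0\}$. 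In these three cases the completed two-ray game reproduces, respectively, the links $\epsilon_a^{-1}$ to object~$1$, $\phi^a$ to object~$4$, and $\psi^a$ to object~$5^a$ of Examples~\ref{link:1->3a}, \ref{link:3b->4} and~\ref{link:3a->5}; matching blow-up centres and coordinate formulas then identifies $\Psi$ with the stated map. The symmetric argument with $x_0,x_1$ swapped gives the statement for $D_5^b$, with $\epsilon_b^{-1},(\chi^b)^{-1},\phi^b,\psi^b$.

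The main obstacle is the curve case: showing that every curve $\Gamma\subset D_5^a$ outside these three families leads to a two-ray game whose final, or some intermediate, step produces worse-than-terminal singularities, so that no volume preserving Sarkisov link results. I would organise this by stratifying the curves $\Gamma$ by their degree and their class in $\Pic(D_5^a)$, and eliminating each stratum using the extremal-neighbourhood lemmas of Section~\ref{sec:extr-contr}; turning this into an exhaustive rather than ad hoc argument is the technical heart of the proof.
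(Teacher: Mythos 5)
Your setup is the same as the paper's: since $\rho(\PP(1,1,1,2))=1$ the link starts with a divisorial contraction whose centre, by Proposition~\ref{divextter}, is either the singular point $q=[0{:}0{:}0{:}1]$ (where \cite{Kaw96} forces the $\left(\tfrac12,\tfrac12,\tfrac12\right)$-blow-up, giving $(\chi^a)^{-1}$) or a curve $\Gamma\subset D_5^a$ avoiding $q$; and you correctly guess the three curve families that realize $\epsilon_a^{-1}$, $\phi^a$ and $\psi^a$. But the statement you defer as ``the technical heart'' --- that \emph{every other} curve $\Gamma$ leads to a bad link --- is exactly the content of the lemma, and your sketch of how to prove it does not work as proposed. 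The classification of (weak) Fano $\PP^1$-bundles over $\PP^2$ (Theorem~\ref{thm:weakFanoVectorBundles}) is the tool for Theorem~B and for Lemma~\ref{lem:Link2}, where the ambient Mori fibre space is over $\PP^2$; a two-ray game starting from the blow-up of a curve in $\PP(1,1,1,2)$ does not terminate in a $\PP^1$-bundle over $\PP^2$ in the relevant cases, so that theorem gives you no constraint here.

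What the paper does instead is a finite lattice argument in $\Cl(D_5^a)\cong\ZZ\overline{e}_1\oplus\ZZ e_1$ (with $\overline{e}_1=\{y=x_1=0\}$, $e_1=\{x_1=x_0y+B=0\}$ and intersection matrix $\left(\begin{smallmatrix}-2&3\\3&-3/2\end{smallmatrix}\right)$ in the appropriate order): write $[\Gamma]=a\overline{e}_1+2be_1$, blow up $\Gamma$, and split according to the sign of $-K_Y$ on the second extremal ray $R$. If $-K_Y\cdot R<0$ the ray is spanned by (the strict transform of) $\overline{e}_1$ or $e_1$, and Lemmas~\ref{lem:inverse flips} and~\ref{lem:BadAntiflips} restrict $(a,b)$; if $-K_Y\cdot R\geq 0$, nefness of $-K_Y$ gives two linear inequalities in $(a,b)$. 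This leaves eight lattice points. Three of them, $(1,0)$, $(1,1)$, $(3,1)$, are shown via Riemann--Roch on $D_5^a$ to put $\Gamma$ on a surface of low degree, identifying the link with $\epsilon_a^{-1}$, $\phi^a$, $\psi^a$; the rest must be killed one by one: $(0,1)$ and $(2,0)$ because $\Gamma$ would be non-reduced, $(2,1)$ and $(4,2)$ because $\Gamma$ is then a complete intersection and the argument of Proposition~\ref{codim2} produces a bad link, and $(3,2)$ by an explicit two-ray game inside a toric $\PP^2$-bundle over $\PP(1,1,1,2)$, ending in a hypersurface point of multiplicity at least $3$ in $\PP(1,1,1,1,2,2)$, hence non-terminal. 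Note in particular that the extremal-neighbourhood lemmas of Section~\ref{sec:extr-contr} alone cannot eliminate the classes with $-K_Y$ nef (the analogues of $(2,1)$, $(4,2)$, $(3,2)$), since no antiflip of a $K$-negative curve ever occurs there; your proposed stratification would stall precisely on these cases, which are the genuinely hard ones.
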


\begin{proof} 
We prove the statement
for the pair $\left(\PP(1,1,1,2),D_5^a\right)/\Spec \CC$. 
The other case is similar. The volume preserving Sarkisov link 
\[
\Psi \colon \left(\PP(1,1,1,2),D_5^a\right)/\Spec \CC \dasharrow
(X^\dagger, D^\dagger)/S^\dagger
\]
must begin with a divisorial contraction $g\colon Y\to \PP(1,1,1,2)$.
By Proposition~\ref{divextter}, the centre $Z$ of this divisorial
contraction is either a curve $\Gamma\subset D_5^a$, or the singular
point $[0:0:0:1]\in\PP(1,1,1,2)$, which is also the singular point of
$D_5^a$.  By \cite[Theorem 5]{Kaw96}, either
$g\colon Y\to \PP(1,1,1,2)$ is the blow-up of $[0:0:0:1]$ with weights
$\left(\frac{1}{2},\frac{1}{2},\frac{1}{2}\right)$, or
$Z=\Gamma\subset D_5^a$ is a curve not passing through $[0:0:0:1]$.
In the first case, if we view $\PP(1,1,1,2)$ as the cone over a
Veronese surface, then $g\colon Y\to \PP(1,1,1,2)$ is the standard
blow-up of the vertex, and it leads to the link $(\chi^a)^{-1}$. From
now on we assume that $Z=\Gamma\subset D_5^a$ is a curve not passing
through $[0:0:0:1]$.

Recall that $D_5^a \subset \mathbb{P}(1,1,1,2)_{(x_0,x_1,x_2,y)}$ is given by the equation
\[
x_0y^2+B_3(x_0,x_1,x_2)y+x_1C_4(x_0,x_1,x_2)=0,
\]
where $B_3$ and $C_4$ are homogeneous polynomials of degree three and
four, respectively. The point $[0:0:0:1]$ is the unique singular point
of $D_5^a$. It is a singularity of type $A_1$.  The divisor class
group of $D_5^a$ is generated by the curves
$e_1 = \{x_1 = x_0y+B_3=0\}$, $\overline{e}_1 = \{y = x_1 = 0\}$, with
intersection matrix
$$
\left(\begin{array}{cc}
-\frac{3}{2} & 3 \\ 
3 & -2
\end{array}\right).
$$
Let $\Gamma\subset D_5^a \subset \mathbb{P}(1,1,1,2)$ be a
reduced and irreducible curve not passing through the singularity of
$D_5^a$, and write its class in $\Cl(D_5^a)$ as
$[\Gamma]=a\overline{e}_1+2b e_1$, with $a,b\in \NN$. 
Let $\pi:Y\rightarrow \mathbb{P}(1,1,1,2)$ be the blow-up of
$\mathbb{P}(1,1,1,2)$ along $\Gamma$ with exceptional divisor
$E$. Then $\NE (Y)$ has two extremal rays. One of them is generated by
a curve $e\subset E$ that is contracted by $\pi$, and we denote  the other  one 
by $R$. Since
$$K_{Y} = \pi^{*}K_{\mathbb{P}(1,1,1,2)} + E \ ,$$
we have $-K_Y\cdot e = 1$.

Suppose  that $-K_Y\cdot R < 0$. 
Then the curve generating $R$ is contained in the strict transform 
$\widehat{D}_5^a$ of $D_5^a$, which is mapped isomorphically to $D_5^a$.
Hence, $R$ is generated either by $\overline{e}_1$ or by $e_1$.
(Here we denote by the same symbols the strict transforms of $\overline{e}_1$ and $e_1$
in $\widehat{D}_5^a\subset Y$.)
Suppose that $R$ is generated by $\overline{e}_1$. By
Lemma~\ref{lem:inverse flips}, $-K_Y\cdot \overline{e}_1 = -1$. Then $a = 3b-3$
and  $\overline{e}_1 = (4a-8b) e + \frac{2}{3} e_1$. So we must have 
$b < 3$, that is
\begin{equation}\label{cas1}
(a,b) \in \{ (0,1), (3,2)\}.
\end{equation} 
Suppose that $R$ is generated by $e_1$. Then
\[
  \widehat{D}_5^a\cdot e_1 = \frac{15}{2}+3b-3a = -\frac{k}{2} \ ,
\]
where $k=-15+6(a-b)$ is a positive integer, and hence $k\geq 3$. By
Lemma~\ref{lem:BadAntiflips}, the antiflip $Y^{-}$ of $R$ has worse than terminal singularities, a contradiction.

Now suppose that  $-K_Y\cdot R\geq 0$, and so $-K_Y$ is nef. In particular,
$-K_Y\cdot\bar e_1 \geq 0$, $-K_Y\cdot e_1\geq 0$, and these two
inequalities translate into the following system
\begin{equation}\label{const_2}
\left\lbrace\begin{array}{l}
5+2a-6b\geq 0,\\ 
\frac{15}{2}+3b-3a\geq 0.
\end{array}\right.
\end{equation}
The area in the $(a,b)$-plane delimited by (\ref{const_2}) and the
inequalities $a\geq 0$, $b\geq 0$, along with its integral points, is displayed in the following picture.
\begin{center}
\begin{tikzpicture}[line cap=round,line join=round,>=triangle 45,x=1cm,y=1cm]
\clip(-0.0,-0.1) rectangle (5.0,2.5);
\draw [line width=0.4pt,domain=-0.1:5.1] plot(\x,{(--2.0833333333333335--0.8333333333333334*\x)/2.5});
\draw [line width=0.4pt,domain=2.5:5.1] plot(\x,{(-6.25--2.5*\x)/2.5});
\draw [line width=0.4pt] (0,0)-- (0,0.8333333333333334);
\draw [line width=0.4pt] (0,0)-- (2.5,0);
\begin{scriptsize}
\draw [fill=black] (1,1) circle (2.5pt);
\draw[color=black] (1.0,0.8) node {$\scriptstyle{(1,1)}$};
\draw [fill=black] (2,1) circle (2.5pt);
\draw[color=black] (2.0,0.8) node {$\scriptstyle{(2,1)}$};
\draw [fill=black] (3,1) circle (2.5pt);
\draw[color=black] (3.0,0.8) node {$\scriptstyle{(3,1)}$};
\draw [fill=black] (4,2) circle (2.5pt);
\draw[color=black] (4.0,1.8) node {$\scriptstyle{(4,2)}$};
\draw [fill=black] (1,0) circle (2.5pt);
\draw[color=black] (1.0,0.2) node {$\scriptstyle{(1,0)}$};
\draw [fill=black] (2,0) circle (2.5pt);
\draw[color=black] (2.0,0.2) node {$\scriptstyle{(2,0)}$};
\end{scriptsize}
\end{tikzpicture}
\end{center}

Taking into account these integral points, together with (\ref{cas1}), 
we are left with the following possibilities for the class $[\Gamma]=a\overline{e}_1+2b e_1$:
$$(a,b) \in \{(1,0),(1,1),(3,1),(0,1),(2,0),(2,1),(4,2),(3,2)\}.$$
We shall show that $(a,b)=(1,0)$ leads to the link $\epsilon_a^{-1}$,
$(a,b)=(1,1)$ leads to $\phi^a$, $(a,b)=(3,1)$ leads to
$\psi^a$, while all the other cases will be excluded. We discuss first the cases
that occur.

\smallskip

\textsc{Case $(a,b)=(1,0)$.} In this case, $\Gamma\sim
\overline{e}_1$.
Blowing it up we get back to the weighted blow-up of $\mathbb{P}^3$ at
$[0:0:0:1]$ with weights $(1,2,1)$. This gives the link
$\epsilon_a^{-1}$.

\smallskip

\textsc{Case $(a,b) = (1,1)$.} In this case
$\Gamma\sim \overline{e}_1+2e_1$. Let $P$ be a divisor on $\Gamma$
associated to $\mathcal{O}_{\Gamma}(2)$. Then $\deg(P) = 8$ and, since
$\deg(K_{\Gamma}) = 4$, Riemann--Roch yields $h^0(\Gamma,P) = 6$. Since
$h^0(\mathbb{P}(1,1,1,2),\mathcal{O}_{\mathbb{P}(1,1,1,2)}(2)) = 7$, we
have $h^0(\mathcal{I}_{\Gamma}(2))\geq 1$. Let
$Q\subset\mathbb{P}(1,1,1,2)$ be a quadric containing $\Gamma$. Then
$Q\cap D_5^a$ has class
$2\overline{e}_1+2e_1 = \overline{e}_1+(\overline{e}_1+2e_1)$. So
$Q\cap D_5^a$ is the union of $\Gamma$ and a residual curve of class
$\overline{e}_1$, which must be $\overline{e}_1$ itself since it is
rigid in $D_5^a$.

We may write $Q = \{y-x_1L(x_0,x_1,x_2)=0\}$ where $L$ is a linear
form. Substituting $y = x_1L$ in the equation of $D_5^a$ we get
\[
\Gamma = 
\left\lbrace\begin{array}{l}
Q = y-x_1L = 0,\\ 
F = x_0x_1L^2+BL+C=0.
\end{array}\right. 
\]
We see from Lemma~\ref{l1} that blowing-up $\Gamma$ leads to the link $\phi^a$
described in Example~\ref{link:3b->4}. 

\smallskip

\textsc{Case $(a,b) = (3,1)$.} In this case
$\Gamma\sim 3\overline{e}_1+2e_1$. Let $P$ be a divisor on $\Gamma$
associated to $\mathcal{O}_{\Gamma}(3)$. Then $\deg(P) = 18$ and, since
$\deg(K_{\Gamma}) = 12$, Riemann--Roch yields $h^0(\Gamma,P) = 12$. Since
$h^0(\mathbb{P}(1,1,1,2),\mathcal{O}_{\mathbb{P}(1,1,1,2)}(3)) = 13$,
we have $h^0(\Gamma,\mathcal{I}_{\Gamma}(3))\geq 1$. Let
$S\subset\mathbb{P}(1,1,1,2)$ be a cubic containing $\Gamma$. Then
$S\cap D_5^a$ has class
$3\overline{e}_1+3e_1 = e_1+(3\overline{e}_1+2e_1)$. So $S\cap D_5^a$ is
the union of $\Gamma$ and a residual curve of class $e_1$, which must
be $e_1$ itself since it is rigid in $D_5^a$.

We may write $S = \{x_0y+B+x_1(cy+Q)=0\}$, where $Q=Q(x_0,x_1,x_2)$ is a quadratic polynomial. Then $S\cap D_5^a = \Gamma\cup e_1$ and $\Gamma\subset\mathbb{P}(1,1,1,2)$ is defined by
\[
\left\lbrace\begin{array}{l}
F_3 = x_0y+B+x_1(cy+Q)  = 0,\\ 
G_4 = y(cy+Q) -C= 0.
\end{array}\right. 
\] 
By Lemma~\ref{l1},
blowing-up $\Gamma$ leads to the link $\psi^a$ described in Example~\ref{link:3a->5}.

\smallskip

\textsc{Easy cases.} Next we exclude all other cases except
$(a,b)=(3,2)$. This case is more difficult and we deal with it at the end.

First, note that, since $e_1$ is rigid inside $D_5^a$, if
$\Gamma\sim 2e_1$ then $\Gamma = 2e_1$ and $\Gamma$ is not
reduced: this rules out the case $(a,b)=(0,1)$. Similarly, we rule out
the case $(a,b)=(2,0)$, that is, $\Gamma\sim 2\overline{e}_1$.

Note that
$\mathcal{O}_{\mathbb{P}(1,1,1,2)}(1)_{|D_5^a}\sim e_1 + \overline{e}_1$.
If $(a,b) = (2,1)$ then $\Gamma\sim 2(e_1+\overline{e}_1)\sim
\mathcal{O}_{\mathbb{P}(1,1,1,2)}(2)_{|D_5^a}$.
Consider the exact sequence
\[0
\rightarrow\mathcal{I}_{\Gamma}(2)\rightarrow\mathcal{O}_{\mathbb{P}(1,1,1,2)}(2)_{|\Gamma}\rightarrow\mathcal{O}_{\Gamma}(2)\rightarrow
0.\]
The degree of the divisor $P$ associated to $\mathcal{O}_{\Gamma}(2)$
on $\Gamma$ is given by
$\deg(P) = 2(e_1+\overline{e}_1)\cdot\Gamma = 4(e_1+\overline{e}_1)^2
= 10$.
By adjunction, $K_{\Gamma} = (K_{D_5^a}+\Gamma)_{|\Gamma}$.
Since $K_{D_5^a}$ is trivial, $\deg(K_{\Gamma}) = \Gamma^2 = 10$. 
(Notice that this holds if $\Gamma$ is reduced and irreducible, but not necessarily smooth \cite[Chapter II]{BPV84}.) Then
$\deg(K_{\Gamma}-P) = 0$ and hence $h^0(\Gamma,K_{\Gamma}-P)\leq 1$.
By Riemann--Roch, we get that $h^0(\Gamma,P)\in\{5,6\}$. Since
$h^0(\mathbb{P}(1,1,1,2),\mathcal{O}_{\mathbb{P}(1,1,1,2)}(2)) = 7$, we
have $h^0(\Gamma,\mathcal{I}_{\Gamma}(2))\geq 1$. Therefore there is a quadric
$Q\subset\mathbb{P}(1,1,1,2)$ containing $\Gamma$, and hence
$\Gamma = D_5^a\cap Q$ is a complete intersection. By the same argument in
the proof of Proposition~\ref{codim2}, the extraction of $\Gamma$
leads to a bad link.

If $(a,b) = (4,2)$, then
$\Gamma\sim 4(e_1+\overline{e}_1)\sim
\mathcal{O}_{\mathbb{P}(1,1,1,2)}(4)_{|D_5^a}$.
In this case, consider the exact sequence
$$0\rightarrow\mathcal{I}_{\Gamma}(4)\rightarrow\mathcal{O}_{\mathbb{P}(1,1,1,2)}(4)_{|\Gamma}\rightarrow\mathcal{O}_{\Gamma}(4)\rightarrow 0.$$
The degree of the divisor $P$ associated to $\mathcal{O}_{\Gamma}(4)$
on $\Gamma$ is given by
$\deg(P) = 4(e_1+\overline{e}_1)\cdot\Gamma = 16(e_1+\overline{e}_1)^2
= 40$,
while $\deg(K_{\Gamma}) = \Gamma^2 = 40$. Then $K_{\Gamma}-P \sim 0$,
and hence $h^0(\Gamma,K_{\Gamma}-P)\leq 1$. By Riemann--Roch, we get
that $h^0(\Gamma,P)\in\{20,21\}$. Since
$h^0(\mathbb{P}(1,1,1,2),\mathcal{O}_{\mathbb{P}(1,1,1,2)}(4)) = 22$,
we have $h^0(\Gamma,\mathcal{I}_{\Gamma}(2))\geq 1$. Therefore there is a quartic
$S\subset\mathbb{P}(1,1,1,2)$ containing $\Gamma$, and hence
$\Gamma = D_5^a\cap S$ is a complete intersection. Again, by the
argument in the proof of Proposition~\ref{codim2},
the extraction of $\Gamma$ leads to a bad link.

\smallskip

\textsc{Case $(a,b) = (3,2)$.} We will show that this case leads to a bad
link. In short: we will blow-up $\Gamma$, analyse the resulting
$2$-ray game with the method of the proof of
Lemma~\ref{l1}, and find that it leads to a bad
link.

We have $\Gamma\sim 3\overline{e}_1+4e_1$. Let $P$ be a
divisor on $\Gamma$ associated to $\mathcal{O}_{\Gamma}(4)$. Then
$\deg(P) = 36$ and, since $\deg(K_{\Gamma}) = 30$, Riemann--Roch yields
$h^0(\Gamma,P) = 21$. 
Since $h^0(\mathbb{P}(1,1,1,2),\mathcal{O}_{\mathbb{P}(1,1,1,2)}(4)) = 22$, we have $h^0(\Gamma,\mathcal{I}_{\Gamma}(4))\geq 1$. Let
$S\subset\mathbb{P}(1,1,1,2)$ be a quartic containing $\Gamma$. Then
$S\cap D_5^a$ has class
$4\overline{e}_1+4e_1 = \overline{e}_1+(3\overline{e}_1+4e_1)$. So
$S\cap D_5^a$ is the union of $\Gamma$ and a residual curve of class
$\overline{e}_1$, which then must be $\overline{e}_1$ itself.

We may write $S = \{yQ-x_1F_3=0\}$, where $Q=Q(x_0,x_1,x_2)$ is a quadratic polynomial and 
$F_3=F_3(x_0,x_1,x_2)$ is a cubic polynomial. Then
$S\cap D_5^a = \Gamma\cup\overline{e}_1$ and
$\Gamma\subset\mathbb{P}(1,1,1,2)$ is defined by
$$
\rank
\left(\begin{array}{ccc}
C_4 & F_3 & y\\ 
x_0y+B_3 & Q & x_1
\end{array}\right)< 2.
$$ 
Since $\Gamma$ cannot pass through the singular point, the monomial $y$ must appear in $Q$, 
and hence we may assume that $Q=y+A_2(x_0,x_1,x_2)$, where $A_2$ is a
quadratic polynomial.

Consider the toric variety $\FF$ with coordinates and weight matrix
\[
\begin{array}{ccccccc}
x_0 & x_1 & x_2 & y & u_0 & u_1 & u_2\\ 
\hline
1 & 1 & 1 & 2 & 0 & -1 & -2\\ 
0 & 0 & 0 & 0 & 1 & 1 & 1
\end{array} 
\]
and stability condition chosen so that the nef cone of $\FF$ is the
span $\langle x_i,u_0 \rangle_+$. This choice gives the irrelevant
ideal $(x_0,x_1,x_2,y)(u_0,u_1,u_2)$, and ensures that we have a
$\PP^2$-bundle morphism $\pi \colon \FF \to \PP(1,1,1,2)$. Consider
the variety $Z\subset \FF$ cut out by the equations
$$
\left(\begin{array}{ccc}
C_4 & F_3 & y\\ 
x_0y+B_3 & Q & x_1
\end{array}\right)
\left(\begin{array}{c}
u_2\\ 
u_1\\ 
u_0
\end{array}\right) = 
\left(\begin{array}{c}
 0 \\ 
 0
\end{array} \right).
$$
It is not hard to see that $Z$ has cDV singularities, that
$\pi_{|Z}\colon Z\to \P(1,1,1,2)$
is a birational morphism with exceptional set a divisor $E$ mapping to
$\Gamma\subset \PP(1,1,1,2)$, and that $-K_Z$ is $\pi_{|Z}$-ample. It
follows from all this that $\pi_{|Z}\colon E\subset Z \to \Gamma \subset
\PP(1,1,1,2)$ is the unique divisorial contraction that generically
blows up $\Gamma \subset \PP(1,1,1,2)$. \footnote{In general, if $W$ is a normal variety
  and $\pi \colon E\subset Z \to \Gamma \subset W$ is a proper birational
  morphism with exceptional set a prime divisor $E$, and such that $-K_Z$ is
  $\QQ$-Cartier and $\pi$-ample, then
  \[
    Z=\uProj_{\O_W} \bigoplus_{n\geq 0}f_\ast \O_Z(-nK_Z) \,.
    \]
  It follows from this characterisation that, if $\pi^\prime \colon
  E^\prime \subset Z^\prime \to \Gamma \subset W$ has the same
  properties and $E=E^\prime$ as valuations of the function field
  $\CC(W)$, then $Z=Z^\prime$. In other words, in the situation of our
proof, there is at most one extremal divisorial contraction $E\subset Z\to
\Gamma \subset \PP(1,1,1,2)$.}

The Mori chamber decomposition of $\FF$ is displayed in the following picture
$$
\begin{tikzpicture}[line cap=round,line join=round,>=triangle 45,x=1.0cm,y=1.0cm]
\clip(-2.1,-0.1) rectangle (2.35,1.3);
\draw [->,line width=0.4pt] (0.,0.) -- (1.,0.);
\draw [->,line width=0.4pt] (0.,0.) -- (2.,0.);
\draw [->,line width=0.4pt] (0.,0.) -- (0.,1.);
\draw [->,line width=0.4pt] (0.,0.) -- (-1.,1.);
\draw [->,line width=0.4pt] (0.,0.) -- (-2.,1.);
\begin{scriptsize}
\draw [fill=black] (1.,0.) circle (0.5pt);
\draw[color=black] (1.0,0.21) node {$x_0,x_1,x_2$};
\draw [fill=black] (2.,0.) circle (0.5pt);
\draw[color=black] (2.14,0.21) node {$y$};
\draw [fill=black] (0.,1.) circle (0.5pt);
\draw[color=black] (0.14,1.21) node {$u_0$};
\draw [fill=black] (-1.,1.) circle (0.5pt);
\draw[color=black] (-0.86,1.21) node {$u_1$};
\draw [fill=black] (-2.,1.) circle (0.5pt);
\draw[color=black] (-1.86,1.21) node {$u_2$};
\end{scriptsize}
\end{tikzpicture}
$$
The first wall-crossing $\FF\dasharrow \FF^\prime$ is the flip of
$\{u_1 = u_2 =0\}$, whose restriction to $Z$ is the flip
$Z\dasharrow Z^\prime$ of the strict transform of $\overline{e}_1$.
The next wall-crossing corresponds to a divisorial contraction $\pi^\prime \colon \FF^\prime \to
\PP(1,1,1,1,2,2)$.

We fix homogeneous coordinates $(\xi_0,\xi_1,\xi_2,\xi_3,w_0,w_1)$ on $\PP(1,1,1,1,2,2)$.
The composed birational map $\FF_{(x_0,x_1,x_2,y,u_0,u_1,u_2)} \dasharrow \PP(1,1,1,1,2,2)_{(\xi_0,\xi_1,\xi_2,\xi_3,w_0,w_1)}$ is given by
$$
(x_0,x_1,x_2,y,u_0,u_1,u_2)\mapsto (x_0u_2,x_1u_2,x_2u_2,u_1,yu_2^2,u_0u_2),
$$
and the image $X^\prime$ of $Z$ in  $\mathbb{P}(1,1,1,1,2,2)$ is given by
$$
\left\lbrace\begin{array}{l}
C_4+\xi_3F_3+w_0w_1 = 0,\\ 
\xi_0w_0+B_3+\xi_3Q_2+\xi_1w_1 =0,
\end{array}\right. 
$$
where $Q_2 = w_0+A_2(\xi_0,\xi_1,\xi_2)$. This is a bad link: the
point $[0:0:0:1:0:0]\in X^\prime$ is a hypersurface singularity of
multiplicity at least $3$, and hence it is not terminal. 
\end{proof}

\begin{lem}
  \label{lem:Link4}
Let $(X_4, D_{3,4})/S$ be a Mf CY pair of the family of objects~$4$ described in the end of Section~\ref{sec:sarkisov}. 
The only volume preserving Sarkisov links 
\[\Psi\colon (X_4, D_{3,4})/\Spec \CC \dasharrow
(X^\dagger,D^\dagger)/S^\dagger
\] 
are the maps $(\phi^a)^{-1},(\phi^b)^{-1}$ described in Example~\ref{link:3b->4}.
\end{lem}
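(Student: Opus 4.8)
The plan is to deduce the lemma from Proposition~\ref{codim2}, after verifying that its hypotheses hold for the pair $(X_4,D_{3,4})$. This pair is a $\QQ$-factorial (t,~lc) Mf CY pair over $\Spec\CC$: for general $B$, $C$, $L$ the hypersurface $X_4\subset\PP(1^3,2^2)$ is quasismooth, hence $\QQ$-factorial with $\Cl(X_4)\cong\ZZ\cdot\OO_{X_4}(1)$, it is Fano with $\rho=1$, and a local computation in $\PP(1^3,2^2)$ shows that its only singularities are the terminal quotient points $p_0=[0:0:0:1:0]$ and $p_1=[0:0:0:0:1]$, each of type $\frac{1}{2}(1,1,1)$; moreover $(X_4,D_{3,4})$ is canonical by Lemma~\ref{D_normal}, as it is volume preserving birational to $(\PP^3,D)$. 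Since $\rho(X_4)=1$ there are no small contractions from $X_4$, so $\Psi$ is a Sarkisov link of type~(I) or~(II), and in particular begins with a volume preserving divisorial contraction $g\colon(Z,D_Z)\to(X_4,D_{3,4})$.

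The point that needs genuine work is that the restriction homomorphism $\Cl(X_4)\to\Cl(D_{3,4})$ is an isomorphism. The same local computation shows that $D_{3,4}$ passes through both $p_0$ and $p_1$, has an $A_1$ singularity at each, and is smooth elsewhere; thus $D_{3,4}$ is a K3 surface with exactly two $A_1$ singularities, and, being birational to $D$, its minimal resolution is the minimal resolution $\widetilde D$ of $D$. Contracting the two disjoint $(-2)$-curves of $\widetilde D$ lying over $p_0$ and $p_1$ and computing in the rank-$3$ Picard lattice of $\widetilde D$ (cf.\ Remark~\ref{rem:class_rankA2}), one finds $\Cl(D_{3,4})\cong\ZZ$ with a generator of self-intersection $3$. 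On the other hand $(\OO_{X_4}(1)|_{D_{3,4}})^2=\frac{1\cdot 1\cdot 4\cdot 3}{2\cdot 2}=3$, so $\OO_{X_4}(1)|_{D_{3,4}}$ is a generator of $\Cl(D_{3,4})$, and $\Cl(X_4)\to\Cl(D_{3,4})$ is an isomorphism.

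Granting this, Proposition~\ref{codim2} forces the centre $Z$ of $g$ to be a closed point. By Proposition~\ref{divextter}, $Z\in D_{3,4}$; and since $D_{3,4}$ is Du Val, hence terminal precisely at its smooth points, $Z$ must be $p_0$ or $p_1$. By \cite[Theorem~5]{Kaw96}, a divisorial contraction of a threefold centred at a $\frac{1}{2}(1,1,1)$ point is the weighted blow up with weights $\frac{1}{2}(1,1,1)$ --- equivalently, viewing the germ as the cone over the Veronese surface $\PP^2\hookrightarrow\PP^5$, the blow up of the vertex. Running the remaining $2$-ray game on $Z$, which is exactly the one that appears, read backwards, in Example~\ref{link:3b->4}, then identifies $\Psi$ with $(\phi^a)^{-1}$ if $Z=p_0$ (target object~$3^a$) and with $(\phi^b)^{-1}$ if $Z=p_1$ (target object~$3^b$); this is a matter of matching the explicit toric descriptions given there. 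The main obstacle is the middle step: pinning down $\Sing(D_{3,4})$ precisely and carrying out the lattice computation, since it is exactly the isomorphism $\Cl(X_4)\cong\Cl(D_{3,4})$ that activates Proposition~\ref{codim2} and reduces the classification to the easy case of divisorial contractions over the two quotient singular points.
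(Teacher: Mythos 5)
Your proposal is correct and follows essentially the same route as the paper's proof: Picard rank one forces the link to start with a divisorial contraction, the isomorphism $\Cl(X_4)\to\Cl(D_{3,4})$ activates Propositions~\ref{divextter} and~\ref{codim2} to force the centre to be one of the two $\frac{1}{2}(1,1,1)$ points, and \cite{Kaw96} pins down the contraction as the $\frac{1}{2}(1,1,1)$ weighted blow-up, yielding $(\phi^a)^{-1}$ and $(\phi^b)^{-1}$. The only difference is that you spell out (correctly) the lattice computation behind $\Cl(X_4)\cong\Cl(D_{3,4})$, which the paper merely asserts as a consequence of the singularity structure of $X_4$ and $D_{3,4}$.
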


\begin{proof}
  Since $X_4$ has Picard rank~$1$, any link $\Psi\colon
  (X_4,D_{3,4})/\Spec \CC \dasharrow (X^\dagger , S^\dagger)$ must
  start with a divisorial contraction $\pi \colon (Z, D_Z)\to
  (X_4,D_{3,4})$ with exceptional divisor $E\subset Z$. 

  The \mbox{$3$-fold} $X_4$ has two singular points of type
  $\frac{1}{2}(1,1,1)$ on the line $\PP^1_{y_0,y_1}\subset
  \PP(1,1,1,1,2,2)$, and $D_{3,4}$ contains these two points as
  $A_1$-singularities. By Proposition~\ref{divextter}, either $\pi(E)=\Gamma\subset D_{3,4}$ 
  is a curve, or $\pi(E)=x\in D_{3,4}$ is one of the two singular points.

  If $\pi(E)=\Gamma\subset D_{3,4}$ is a curve, then, by the main result~\cite{Kaw96}, $\Gamma$ avoids both the two singular points; and hence $\OO_{D_{3,4}}(\Gamma)$ is a Cartier divisor. 
  
  In the notation of \S~6.2, $D_{3,4}$ is obtained from $D_X$ by contracting $e_0$ and $e_1^\prime$. We can identify $\Pic D_{3,4}$ with $\langle e_0,e_1^\prime \rangle^\perp \subset \Pic D_X$, and hence $\Pic D_{3,4} \cong \ZZ$ is generated by (for example) the class of 
  \[
  C = 2e_1 + e_0 + 3 e_1^\prime .
  \]
  But $\OO_{D_{3,4}}(C)=\OO_{D_{3,4}}(2)$ --- for example because both line bundles have self-intersection $12$. 
  It follows from Proposition~\ref{codim2} that there is a positive integer $k$ and a Cartier divisor $H\in |\OO_X(2k)|$ such that $\Gamma=H_{|D_{3,4}}$, and this case in fact does not occur.
  
  If $\pi(E)=x\in D_{3,4}$ is one of the two singular points, then --- by the main result of~\cite{Kaw96} --- $\pi$ is the weighted blow-up with weights
  $\left(\frac{1}{2},\frac{1}{2},\frac{1}{2}\right)$. The two extremal
  contractions give the links $(\phi^a)^{-1}$ and
  $(\phi^b)^{-1}$.
\end{proof}

\begin{lem}
  \label{lem:Link5}
Let $(X_4, D_{2,4})/S$ be a Mf CY pair of the family of objects~$5^a$
described in the end of Section~\ref{sec:sarkisov}.  
The only volume preserving Sarkisov links 
\[\Psi\colon (X_4, D_{2,4})/\Spec \CC\dasharrow
(X^\dagger,D^\dagger)/S^\dagger
\] 
are the maps $(\psi^a)^{-1},(\widetilde{\psi}^b)^{-1}$ described in Example~\ref{link:3a->5}.
\end{lem}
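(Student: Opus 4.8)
The plan is to follow the strategy of Lemmas~\ref{lem:Link3} and~\ref{lem:Link4}. Since $X_4\subset\PP(1^4,2)$ has Picard rank one, any volume preserving Sarkisov link $\Psi\colon(X_4,D_{2,4})/\Spec\CC\dasharrow(X^\dagger,D^\dagger)/S^\dagger$ must begin with a volume preserving divisorial contraction $\pi\colon(Z,D_Z)\to(X_4,D_{2,4})$. First I would record that $(X_4,D_{2,4})$ is volume preserving birational to $(\PP^3,D)$ --- compose $\psi^a$ of Example~\ref{link:3a->5} with $\epsilon_a$ of Example~\ref{link:1->3a} --- so by Lemma~\ref{D_normal} the pair $(X_4,D_{2,4})$ is canonical; by Remark~\ref{rem:1}, $D_{2,4}\cong D$ has a unique singular point, of type $A_2$, at $z'=[0{:}0{:}0{:}1{:}0]$, which is also the unique ($cA_2$) singular point of $X_4$. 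Arguing exactly as in Lemma~\ref{lem:Link4}, the coincidence of the singular loci of $X_4$ and $D_{2,4}$ forces $\Cl(X_4)\to\Cl(D_{2,4})\cong\Cl(D)\cong\ZZ$ to be an isomorphism, so Proposition~\ref{codim2} applies: the center of $\pi$ is a point of $D_{2,4}$, and by Proposition~\ref{divextter} it must be a point at which $D_{2,4}$ fails to be terminal. Hence the center of $\pi$ is $z'$.

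Next I would classify the volume preserving divisorial contractions $\pi\colon(Z,D_Z)\to(X_4,D_{2,4})$ centred at $z'$. Being volume preserving, together with canonicity of the pair and Proposition~\ref{divextter}, means $D_Z$ is the strict transform of $D_{2,4}$ and $a(E,K_{X_4}+D_{2,4})=0$ for the exceptional divisor $E$; restricting $K_Z+D_Z=\pi^*(K_{X_4}+D_{2,4})$ to $D_Z$ and applying adjunction shows that $\bar\pi\colon D_Z\to D_{2,4}$ is a crepant partial resolution of the $A_2$-singularity of $D_{2,4}$, so it extracts exactly one of the two exceptional $(-2)$-curves of the minimal resolution. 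Using the toric/valuative bookkeeping in the style of Lemma~\ref{buA1}(2) --- where the two weighted blow-ups $(2,1,1)$ and $(1,2,1)$ of the $A_2$-point of $D\subset\PP^3$ showed up --- I would show that the valuation $E$ over $X_4$ is determined by which of the two $(-2)$-curves gets extracted, and hence that there are precisely two such extremal divisorial contractions. The first steps of $(\psi^a)^{-1}$ and of $(\widetilde\psi^b)^{-1}$ realise them: the toric description in Example~\ref{link:3a->5} of $\psi^a$ (the divisorial contraction of $\FF$ onto $\PP(1,1,1,1,2)$ sending the divisor $(v=0)$ to $[0{:}0{:}0{:}1{:}0]$, restricted to $X_4$) exhibits one, and conjugating by the substitution $\widetilde y=-y-x_3(x_0+x_1)$ of Example~\ref{rem:objects5} exhibits the other.

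For each of these two divisorial contractions I would then play the $2$-ray game on $Z$ (which has Picard rank two), as in the proofs of Lemmas~\ref{l1} and~\ref{lem:Link3}: one extremal ray of $\NE(Z)$ is the $\pi$-contraction, and the other has to be analysed using the intersection theory of the K3 surface $D_{2,4}\cong D$ (equivalently, of its minimal resolution $D_X$ and the Mori cone $\NE(D_X)=\langle [e_0],[e_1],[e'_0],[e'_1]\rangle_+$ of Paragraph~\ref{D_A2}), together with Lemmas~\ref{lem:inverse flips} and~\ref{lem:BadAntiflips} to rule out antiflips creating worse-than-terminal singularities, and the complete-intersection argument from the proof of Proposition~\ref{codim2} to rule out bad divisorial contractions. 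I expect the game to run --- through the flips and flops that are already implicit in the constructions of $\psi^a$ and $\widetilde\psi^b$ --- to a divisorial contraction landing on $\PP(1,1,1,2)$ with boundary $D_5^a$, respectively $D_5^b$, thereby identifying $\Psi$ with $(\psi^a)^{-1}$, respectively $(\widetilde\psi^b)^{-1}$.

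The hard part will be the classification step. Unlike the quotient singularity $\tfrac12(1,1,1)$ handled in Lemma~\ref{lem:Link4}, there is no single off-the-shelf classification of divisorial contractions to the $cA_2$ point $z'\in X_4$ that can simply be invoked, so the enumeration must be done by hand, combining the crepancy constraint $a(E,K_{X_4}+D_{2,4})=0$ with the explicit local equations of $X_4$ and $D_{2,4}$ near $z'$ coming from Remark~\ref{rem:1}. Making rigorous the assertion that the extracted threefold valuation $E$ is pinned down by the $(-2)$-curve it extracts on the surface $D_{2,4}$ --- so that there are exactly two admissible contractions and no more --- is the delicate point; I would expect to handle it by running the inductive tower of blow-ups of Lemma~\ref{buA1}, adapted to the ambient $cA_2$ singularity, and checking at each stage that the centre of $E$ lands on the smooth locus of the strict transform of $D_{2,4}$.
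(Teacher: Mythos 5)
Your setup coincides with the paper's: Picard rank one forces the link to begin with a volume preserving divisorial contraction, the coincidence of the singular loci gives $\Cl(X_4)\cong\Cl(D_{2,4})$, and Propositions~\ref{divextter} and~\ref{codim2} pin the centre to the unique singular point $[0{:}0{:}0{:}1{:}0]$. The genuine gap is exactly the step you flag as the "hard part": the enumeration of extremal divisorial contractions centred at the $cA_2$ point. Contrary to your assertion that no off-the-shelf classification exists, the paper invokes Kawakita's classification~\cite{Ka03} (general elephants of threefold divisorial contractions): for a point analytically of the form $xy+z^3+t^3=0$ there are, up to isomorphism, precisely two extremal divisorial contractions, namely the weighted blow-ups with weights $(2,1,1,1)$ and $(1,2,1,1)$ (the ordinary blow-up is excluded because its exceptional locus has two components, hence is not extremal). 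With this in hand the proof is immediate: the explicit toric description in Example~\ref{link:3a->5} shows that $(\psi^a)^{-1}$ starts with one of these two weighted blow-ups, and the coordinate change of Example~\ref{rem:objects5} shows that $(\widetilde{\psi}^b)^{-1}$ starts with the other; since a Sarkisov link is determined by its initial extraction, no further $2$-ray game analysis is needed.

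Your proposed substitute --- that the threefold valuation $E$ is determined by which of the two $(-2)$-curves of the minimal resolution of the $A_2$ point it extracts on $D_{2,4}$, to be verified by a tower of blow-ups in the style of Lemma~\ref{buA1} --- is not a proof as it stands, and it is not clear it can be completed along those lines. The condition $a(E,K_{X_4}+D_{2,4})=0$ singles out infinitely many valuations over the $cA_2$ point (crepant for the pair), and the real issue is which of them are extracted by a Mori divisorial contraction with $\QQ$-factorial terminal source; that is precisely the content of the cited classification, and it cannot be read off from the surface $D_{2,4}$ alone. Note also that the mechanism of Lemma~\ref{buA1} itself rests on Kawakita's earlier theorem~\cite{Ka01} for contractions to \emph{smooth} points, so adapting it "to the ambient $cA_2$ singularity" is not bookkeeping but would amount to reproving a special case of~\cite{Ka03}. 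As written, your argument establishes that the two known links occur, but not that they are the only ones.
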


\begin{proof}
    Since $X_4$ has Picard rank~$1$, any link $\Psi\colon
  (X_4,D_{2,4})/\Spec \CC \dasharrow (X^\dagger , S^\dagger)$ must
  start with a divisorial contraction $\pi \colon (Z, D_Z)\to
  (X_4,D_{2,4})$. 

  The key observation is this: $X_4$ has a unique singular point
  $[0:0:0:1:0]\in X_4$, analytically isomorphic to the germ at the
  origin of the hypersurface
  \begin{equation}
    \label{eq:analytics}
xy+z^3+t^3=0    
  \end{equation}
  in $\CC^4$. The surface $D_{2,4}$ passes through this point, and has
  an $A_2$-singularity there. In fact, upon substituting $y=-x_1x_3$,
  one sees that the surface $D_{2,4}$ is the original surface
  $D\subset \PP^3$. It follows from this that
  $\Cl(X_4)\to \Cl(D_{2,4})$ is an isomorphism.

  By Propositions~\ref{divextter} and~\ref{codim2},
  $\pi \colon Z \to X_4$ contracts the unique exceptional divisor to
  the singular point. By~\cite{Ka03}, up to isomorphism, there are
  precisely two divisorial contractions to a singular point as in
  Equation~\ref{eq:analytics}, given by the weighted blow-ups with
  weights $(2,1,1,1)$ and $(1,2,1,1)$.\footnote{If you find this
    statement confusing, note that the ``ordinary'' blow-up of the
    point is not an extremal contraction, because the exceptional
    divisor is not irreducible (it has two irreducible components).}
  These two extremal contractions
  give the links $(\psi^a)^{-1}$ and $(\widetilde{\psi}^b)^{-1}$.

  Indeed, consider the link
  $\psi^a\colon (\PP(1^3,2),D_5^a)\dasharrow (X_4, D_{2,4})$, as
  described in detail in Example~\ref{link:3a->5}. As described there,
  the link terminates with $\pi^\prime \colon \FF \to \PP(1^4, 2)$, and it
  is clear from Equation~\ref{eq:tricky_blowup} that $\pi$ is the
  weighted blow up of the point $x=[0:0:0:1:0]\in \PP(1^4,2)$ with weights
  $(1,1,1,2)$. The tangent cone of $x\in X_4$ is
  $y(y+x_0+x_1)$, hence $\pi$ induces the extremal divisorial
  contractions to $x\in X_4$ where $y$ has weight~$2$. The change of
  coordinates that transform object $5^a$ to $5^b$
  (Example~\ref{rem:objects5}) sets
  $-\widetilde{y}=y+x_3(x_0+x_1)$, and this shows that
  $\widetilde{\psi}^b$ terminates with the blow-up where $\widetilde{y}$ has
  weight two, that is, the ``other'' extremal divisorial contraction
  to $x\in X_4$.
\end{proof}

\bibliographystyle{amsalpha}

\providecommand{\bysame}{\leavevmode\hbox to3em{\hrulefill}\thinspace}
\providecommand{\MR}{\relax\ifhmode\unskip\space\fi MR }
\providecommand{\MRhref}[2]{%
  \href{http://www.ams.org/mathscinet-getitem?mr=#1}{#2}
}
\providecommand{\href}[2]{#2}

\end{document}